\documentclass{amsart}

\newtheorem{thm}{Theorem}[section]

\newtheorem{prop}{Proposition}[section]
\newtheorem{coro}{Corollary}[section]

\newtheorem{remark}{Remark}[section]

\newtheorem{alphthm}{Theorem}[section]

\newtheorem{alphcoro}{Corollary}[section]

\newtheorem{alphprop}{Proposition}[section]

\newtheorem{alphlemma}{Lemma}[section]

\newcommand{\lct}{\; \raisebox{-.96ex}{$\stackrel{\textstyle <}{\sim}$} \;}
\newcommand{\gct}{\; \raisebox{-.96ex}{$\stackrel{\textstyle >}{\sim}$} \;}
\newcommand{\mbb}{\mathbb}

\begin{document}

\title{Fourier restriction in low fractal dimensions}

\author{Bassam Shayya}
\address{Department of Mathematics\\
         American University of Beirut\\
         Beirut\\
         Lebanon}
\email{bshayya@aub.edu.lb}


\subjclass[2010]{42B10, 42B20; 28A75.}

\begin{abstract}
Let $S \subset \mbb R^n$ be a smooth compact hypersurface with a strictly 
positive second fundamental form, $E$ be the Fourier extension operator on 
$S$, and $X$ be a Lebesgue measurable subset of $\mbb R^n$. If $X$ contains
a ball of each radius, then the problem of determining the range of 
exponents $(p,q)$ for which the estimate 
$\| Ef \|_{L^q(X)} \lct \| f \|_{L^p(S)}$ holds is equivalent to the
restriction conjecture. In this paper, we study the estimate under the
following assumption on the set $X$: there is a number $0 < \alpha \leq n$ 
such that $|X \cap B_R| \lct R^\alpha$ for all balls $B_R$ in $\mbb R^n$ of 
radius $R \geq 1$. On the left-hand side of this estimate, we are 
integrating the function $|Ef(x)|^q$ against the measure $\chi_X dx$. Our 
approach consists of replacing the characteristic function $\chi_X$ of $X$ 
by an appropriate weight function $H$, and studying the resulting estimate 
in three different regimes: small values of $\alpha$, intermediate values 
of $\alpha$, and large values of $\alpha$. In the first regime, we 
establish the estimate by using already available methods. In the second 
regime, we prove a weighted H\"{o}lder-type inequality that holds for 
general non-negative Lebesgue measurable functions on $\mbb R^n$, and 
combine it with the result from the first regime. In the third regime,  we 
borrow a recent fractal Fourier restriction theorem of Du and Zhang and 
combine it with the result from the second regime. In the opposite 
direction, the results of this paper improve on the Du-Zhang theorem in the 
range $0 < \alpha < n/2$.
\end{abstract}

\maketitle

\section{Introduction}

Let $S$ be a smooth compact hypersurface in $\mbb R^n$ with a strictly 
positive second fundamental form, and $\sigma$ be the surface area measure 
on $S$. The extension operator $E=E_S$ on $S$ is defined as
\begin{displaymath}
Ef(x) = E_Sf(x) = \widehat{f d\sigma}(x)=
\int_S e^{-2 \pi i x \cdot \xi} f(\xi) d\sigma(\xi)
\end{displaymath}
for $f \in L^1(S)=L^1(\sigma)$. The restriction conjecture in harmonic 
analysis asserts that the operator $E$ is bounded from $L^p(S)$ to 
$L^q(\mbb R^n)$ whenever
\begin{equation}
\label{conjexp}
q > \frac{2n}{n-1} \;\;\;\;\; \mbox{ and } \;\;\;\;\; 
\frac{n-1}{p}+\frac{n+1}{q} \leq n-1.
\end{equation}
This conjecture is proved in the plane, but is largely open in higher
dimensions.

There are two important sets of exponents $(p,q)$ satisfying 
(\ref{conjexp}): $\{ (2,q) : q \geq (2n+2)/(n-1) \}$ and
$\{ (\infty,q) : q > 2n/(n-1) \}$. For the first set of exponents, the 
restriction conjecture is known to be true in all dimensions $n \geq 2$. In 
other words, the estimate
\begin{equation}
\label{tomastein}
\| E f \|_{L^q(\mbb R^n)} \lct \| f \|_{L^2(S)} 
\end{equation}
holds (uniformly in $f$) for $q \geq (2n+2)/(n-1)$. This result is known in 
the literature as the Tomas-Stein restriction theorem.

For the second set of exponents, there are only partial results. We know 
that the estimate
\begin{equation}
\label{p=infty}
\| E f \|_{L^q(\mbb R^n)} \lct \| f \|_{L^\infty(S)} 
\end{equation}
holds for $q > 13/4$ when $n=3$ (see \cite{guth:poly}), 
$q > (14/5)-(2/416515)$ when $n=4$ (see \cite{demeter:dim4} and
\cite{jz:severi}), $q > 2(3n+1)/(3n-3)$ when $n \geq 5$ is odd, and
$q > 2(3n+2)/(3n-2)$ when $n \geq 6$ is even (see \cite{guth:poly2} and
\cite{ghi:variable}). (For a recent improvement in $\mbb R^3$, see
\cite{w:brooms}; and in $\mbb R^n$, $n \geq 4$, see \cite{hr:highdim}.)

We also refer the reader to \cite{plms12046} and \cite{jk:someremarks} for
the full range of $p$ and $q$ exponents corresponding to Guth's $q > 13/4$
result in \cite{guth:poly}.

Suppose $n \geq 1$ and $0 < \alpha \leq n$. For Lebesgue measurable 
functions $H: \mbb R^n \to [0,1]$, we define
\begin{displaymath}
A_\alpha(H)= \inf \Big\{ C : \int_{B(x_0,R)} H(x) dx \leq C R^\alpha
\mbox{ for all } x_0 \in \mbb R^n \mbox{ and } R \geq 1 \Big\},
\end{displaymath}
where $B(x_0,R)$ denotes the closed ball in $\mbb R^n$ of center $x_0$ and
radius $R$. We say $H$ is a {\it weight of (fractal) dimension} $\alpha$ if 
$A_\alpha(H) < \infty$. We note that $A_\beta(H) \leq A_\alpha(H)$ if 
$\beta \geq \alpha$, so we are not really assigning a dimension to the 
function $H$; the phrase ``$H$ is a weight of dimension $\alpha$'' is 
merely another way for us to say that $A_\alpha(H) < \infty$. (The 
motivation for referring to $\alpha$ as a fractal dimension comes from 
Sections 4 and 8 below.)

We are interested in weighted restriction estimates of the form
\begin{equation}
\label{genericweighted}
\| Ef \|_{L^q(Hdx)} \lct A_\alpha(H)^{1/q} \| f \|_{L^p(S)}
\end{equation}
that hold uniformly in $f$ and $H$. In other words, the implicit constant in
(\ref{genericweighted}) is allowed to depend on the exponents $p$ and $q$, 
the dimensions $\alpha$ and $n$, and the surface $S$; but must be 
independent of the functions $f$ on $S$ and the weights $H$ on $\mbb R^n$.
We shall refer to (\ref{genericweighted}) as a weighted $L^p$-based 
estimate. 

One of the main goals of this paper is to prove the following theorem, and
then use it in $\mbb R^n$, $n \geq 2$, to obtain new results concerning 
weighted $L^2$-based and $L^\infty$-based restriction estimates.

For Lebesgue measurable functions $F : \mbb R^n \to [0, \infty)$, we define
\begin{displaymath}
M_\alpha F = 
\Big( \sup_H \frac{1}{A_\alpha(H)} \int F(x)^\alpha H(x)dx \Big)^{1/\alpha},
\end{displaymath}
where $H$ ranges over all non-zero weights on $\mbb R^n$ of dimension 
$\alpha$. We prove the following weighted H\"{o}lder-type inequality that
might be of independent interest.

\begin{thm}
\label{whtineq}
Suppose $n \geq 1$ and $0 < \beta < \alpha \leq n$. Then
\begin{equation}
\label{abstractineq}
M_\alpha F \leq M_\beta F
\end{equation}
for all non-negative Lebesgue measurable functions $F$ on $\mbb R^n$.
\end{thm}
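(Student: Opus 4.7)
The plan is to prove the equivalent pointwise inequality
\begin{equation*}
\int F^\alpha H \, dx \leq A_\alpha(H) (M_\beta F)^\alpha
\end{equation*}
for every weight $H$ of dimension $\alpha$; the theorem then follows by taking the supremum over $H$. Write $M := M_\beta F$ (which may be assumed finite; otherwise there is nothing to prove) and $A := A_\alpha(H)$. Two preliminaries reduce the problem. First, testing the definition of $M_\beta F$ against $H = \chi_{B(x_0, r)}$ with $r \leq 1$ (for which $A_\beta(\chi_{B(x_0,r)}) \leq c_n r^n$) and letting $r \to 0$, Lebesgue differentiation yields $F(x_0) \leq M$ at almost every point. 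Second, replacing $F$ by $F_R := F \chi_{B(0,R)}$ and letting $R \to \infty$ via monotone convergence, one reduces to the case where $F$ is compactly supported, say in $B_{R_0}$; this supplies the crude but finite initial bound $\int F^\alpha H \leq M^\alpha \int_{B_{R_0}} H \leq A R_0^\alpha M^\alpha =: K_0$.

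The core of the argument is a self-improving bootstrap. Given any finite bound $\int F^\alpha H \leq K$, introduce the auxiliary weight $W := (F/M)^{\alpha-\beta} H$, which takes values in $[0,1]$ precisely because $F \leq M$ almost everywhere. H\"older's inequality with conjugate exponents $\alpha/(\alpha-\beta)$ and $\alpha/\beta$, applied to the factorization $F^{\alpha-\beta} H = (F^\alpha H)^{(\alpha-\beta)/\alpha} \cdot H^{\beta/\alpha}$ on each ball $B_R$ with $R \geq 1$, gives
\begin{equation*}
\int_{B_R} W \, dx \leq M^{-(\alpha-\beta)} K^{(\alpha-\beta)/\alpha} (A R^\alpha)^{\beta/\alpha} = M^{-(\alpha-\beta)} A^{\beta/\alpha} K^{(\alpha-\beta)/\alpha} R^\beta,
\end{equation*}
so $W$ is a weight of dimension $\beta$ with $A_\beta(W) \leq M^{-(\alpha-\beta)} A^{\beta/\alpha} K^{(\alpha-\beta)/\alpha}$. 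Plugging $W$ into the definition of $M_\beta F$ and using the identity $\int F^\beta W \, dx = M^{-(\alpha-\beta)} \int F^\alpha H \, dx$, one extracts the improved bound $\int F^\alpha H \leq M^\beta A^{\beta/\alpha} K^{(\alpha-\beta)/\alpha} =: T(K)$.

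The map $T$ has unique fixed point $A M^\alpha$, and in logarithmic coordinates it contracts with ratio $(\alpha-\beta)/\alpha \in (0,1)$; hence $T^k(K_0) \to A M^\alpha$ monotonically as $k \to \infty$, which delivers the desired sharp inequality. The hardest step is conceptual: discovering the auxiliary weight $W = (F/M)^{\alpha-\beta} H$. The exponent $\alpha-\beta$ is forced by three simultaneous requirements, namely that (i) $W \leq 1$ pointwise, (ii) H\"older produces exactly the $R^\beta$ growth certifying $W$ as a dim-$\beta$ weight, and (iii) the induced recursion $K \mapsto T(K)$ has fixed point at the sharp value $A M^\alpha$ rather than at a larger multiple. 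That these three constraints align under the same exponent choice is the crux of the argument; the surrounding truncation and iteration are routine.
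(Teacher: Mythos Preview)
Your proof is correct and rests on the same core mechanism as the paper's: use H\"older on balls to turn a bound involving $F$ and the dim-$\alpha$ weight $H$ into a dim-$\beta$ auxiliary weight, feed that weight into the definition of $M_\beta F$, and iterate until the constant stabilizes at the sharp value. The organization, however, differs in two respects worth noting. First, you establish the pointwise bound $F \leq M_\beta F$ almost everywhere via Lebesgue differentiation (testing $M_\beta$ against indicators of small balls); the paper never observes this and instead truncates $F$ simultaneously in value and support, carrying the truncation parameter $N$ through the iteration. Your pointwise bound is what guarantees $W=(F/M)^{\alpha-\beta}H$ is $[0,1]$-valued, replacing the paper's $N^{-1/p}$ normalization. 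Second, your iteration is a scalar contraction $K\mapsto T(K)$ on bounds for the fixed quantity $\int F^\alpha H$, with the fixed point visible at once; the paper instead iterates through a sequence of exponents $\beta_k\to\alpha$ and constants $C_k\to (M_\beta F)^\alpha$, and closes with two applications of Fatou's lemma. Your packaging is a bit cleaner---one monotone-convergence step and an explicit affine contraction---but the underlying H\"older/auxiliary-weight idea is the same.
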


Before we present the rest of our results, we give a couple of examples 
that are meant to provide the reader with a quick overview of the main 
theme of the paper concerning $L^2$-based estimates. In both examples, 
which take place in the plane, $S$ will be the unit circle.

Consider the set
$X= \{ (x,y) \in \mbb R^2 : x > 0 \mbox{ and } 0 \leq y \leq x^{-1/4} \}$.
It is easy to see that the characteristic function $\chi_X$ is a weight on 
$\mbb R^2$ of dimension $3/4$. We want to determine the best range of
exponents $q$ for which the following restriction estimate holds:
\begin{equation}
\label{themeone}
\| Ef \|_{L^q(X)} \lct \| f \|_{L^2(S)}.
\end{equation}

To every $R > 1$ there is a function $f_R$ on $S$ such that 
$\| f_R \|_{L^2(S)} \lct R^{-1/4}$ and $|Ef_R| \gct R^{-1/2}$ on the 
rectangle $[0,R] \times [0,\sqrt{R}]$. The intersection of this rectangle 
with $X$ contains the rectangle $[0,R] \times [0,R^{-1/4}]$, and hence
$\| Ef_R \|_{L^q(X)}$ $\gct$ $R^{(-1/2)+(3/(4q))}$. So the exponent $q$ 
in (\ref{themeone}) must satisfy $(-1/2)+(3/(4q)) \leq -1/4$, and so a
necessary condition for (\ref{themeone}) to hold is $q \geq 3$, which is far
from the sufficient condition $q \geq 6$ guaranteed by (\ref{tomastein}). 
Even the $L^\infty$-based estimate (\ref{p=infty}) only gives the 
sufficient condition $q > 4$ in the plane.

In the second example, we consider the set
$Y= \cup_{l=1}^\infty \mbb R \times [l^2,1+l^2]$, and we observe that the 
characteristic function $\chi_Y$ is a weight on $\mbb R^2$ of dimension 
$3/2$. Again, we want to determine the best range of exponents $q$ for 
which the following restriction estimate holds:
\begin{equation}
\label{themetwo}
\| Ef \|_{L^q(Y)} \lct \| f \|_{L^2(S)}.
\end{equation}

For $R > 1$, let $f_R$ be the same function on $S$ that was defined during 
the first example. Then $|Ef_R| \gct R^{-1/2}$ on every rectangle
$[0,R] \times [l^2,1+l^2]$ with $l^2 \leq \sqrt{R}$. Since there are 
$\sim R^{1/4}$ such rectangles, we see that 
$\| Ef_R \|_{L^q(Y)} \gct R^{(-1/2)+(5/(4q))}$. So the exponent $q$ in 
(\ref{themetwo}) must satisfy $(-1/2)+(5/(4q)) \leq -1/4$, and so a 
necessary condition for (\ref{themetwo}) is $q \geq 5$, which is again far 
from the sufficient condition $q \geq 6$ guaranteed by (\ref{tomastein}).

The results of this paper will show that (\ref{themeone}) and 
(\ref{themetwo}) indeed hold for $q > 3$ and $q > 5$, respectively. As it 
turns out, we can establish these sharp (up to the endpoints $q=3$ and
$q=5$) estimates on $X$ and $Y$ as follows.

We first prove a weighted restriction estimate
\begin{displaymath}
\| Ef \|_{L^{q_1}(Hdx)} \lct A_\beta(H)^{1/q_1} \| f \|_{L^2(S)}
\end{displaymath}
that holds whenever $0 < \beta < 1/2$ and $q_1 > 2$, and then combine it 
with the weighted H\"{o}lder-type inequality of Theorem \ref{whtineq} to
conclude that (\ref{themeone}) holds for $q > 3$. In doing so, we realize
that the same argument shows that the estimate
\begin{displaymath}
\| Ef \|_{L^{q_2}(Hdx)} \lct A_\alpha(H)^{1/q_2} \| f \|_{L^2(S)}
\end{displaymath}
holds whenever $1/2 \leq \alpha \leq 1$ and $q_2 > 4 \alpha$. Combining the 
last estimate with a corollary (see Corollary \ref{highfracdimcoro}) of the 
fractal restriction theorem of Du and Zhang \cite{dz:schrodinger3}, we see 
that (\ref{themetwo}) holds for $q > 5$. For more details, we refer the 
reader to Theorem \ref{main1} and Subsection 3.4.

The strategy that we explore in this paper of proving restriction estimates 
on specific sets, such as the sets $X$ and $Y$ in the above examples, by 
first proving restriction estimates for all weights $H$ of low fractal 
dimensions and then upgrading the estimates to higher fractal dimensions is 
reminiscent of the polynomial method of \cite{guth:poly} and 
\cite{guth:poly2} that upgrades restriction estimates from low algebraic 
dimensions to higher ones.  

When it comes to weighted $L^\infty$-based estimates, i.e.\
$L^\infty(S) \to L^q(Hdx)$ estimates, the situation becomes much harder, 
and we will postpone that discussion to the next section.

\section{Results and methodology}

Any restriction estimate $\| E f \|_{L^q(\mbb R^n)} \lct \| f \|_{L^p(S)}$
is equivalent to the weighted estimate
$\| E f \|_{L^q(Hdx)} \lct A_n(H)^{1/q} \| f \|_{L^p(S)}$. In fact, taking 
$H=1$, we see that the latter estimate implies the former. On the other 
hand, since the surface $S$ is compact, we can find a $C_0^\infty$ function 
$\phi$ on $\mbb R^n$ that satisfies $|\phi| \geq 1$ on $S$ and 
$\widehat{\phi}$ is compactly supported. Given $f \in L^p(S)$, we define
$g \in L^p(S)$ by $g=f/\phi$, and we observe that $|g| \leq |f|$,
$E f = (E g) \ast \widehat{\phi}$, and 
$|E f|^q \lct |E g|^q \ast |\widehat{\phi}|$. The non-weighted estimate 
applied to $g$ then tells us that 
\begin{eqnarray*}
\lefteqn{\int |E f(x)|^q H(x) dx
\lct \int |E g(y)|^q \int |\widehat{\phi}(x-y)| H(x) dx dy} \\ 
& & \lct A_n(H) \int |E g(y)|^q dy \lct A_n(H) \| g \|_{L^p(S)}^q 
    \leq A_n(H) \| f \|_{L^p(S)}^q.
\end{eqnarray*}
In particular, the Tomas-Stein estimate (\ref{tomastein}) has the following
weighted version:
\begin{equation}
\label{weightomastein}
\int |E f(x)|^q H(x) dx \lct A_\alpha(H) \| f \|_{L^2(S)}^q
\end{equation}
for $0 < \alpha \leq n$ and $q \geq (2n+2)/(n-1)$, where we have used the 
fact that $A_n(H) \leq A_\alpha(H)$.

\begin{remark}
\label{curvrelax}
For establishing {\rm (\ref{tomastein})} {\rm (}and hence {\rm 
(\ref{weightomastein})}{\rm )}, the assumption requiring the surface $S$ to 
have a strictly positive second fundamental can be relaxed to just 
requiring $S$ to have a nowhere vanishing Gaussian curvature. 
\end{remark}

With the restriction conjecture being open, it is therefore natural to
investigate the situation when $\alpha < n$. This has been the subject of 
two recent papers \cite{dgowwz:falconer} and \cite{plms12046}. Both papers 
employed Guth's polynomial partitioning method from \cite{guth:poly} and 
\cite{guth:poly2}.

We would like to mention at this point that weighted estimates of the form
\begin{displaymath}
\int_{B(0,R)} |E f(x)|^q H(x) dx \leq C A_\alpha(H) R^\beta
\| f \|_{L^2(S)}^q \hspace{0.5in} (R \geq 1),
\end{displaymath}
where we integrate over the ball $B(0,R)$ instead of $\mbb R^n$, and allow a
positive power of the radius $R$ on the right-hand side of the estimate, 
have been studied extensively in the literature due to their important 
applications in studying decay properties of Fourier transforms of measures 
and, consequently, Falconer's conjecture concerning distance sets in 
geometric measure theory. For such results, we refer the reader to 
\cite{pm:distsets}, \cite{tw:csoipaper}, \cite{tm:generalmeas},
\cite{mbe:notefractmeas}, \cite{mbe:birestfal}, \cite{rogerluca}, 
\cite{plms12046}, \cite{dgowwz:falconer}, \cite{dz:schrodinger3}, 
\cite{th:conical} and the references contained within these papers. The 
results of the present paper do not lead to any progress in the direction 
of Falconer's conjecture. In fact, and as we mentioned in the paper's 
abstract and introduction, our main concern is to study the restriction 
problem on subsets $X$ of $\mbb R^n$ with Lebesgue measure $|X|=\infty$ and 
satisfying the dimentionality property: $|X \cap B_R| \lct R^\alpha$ for 
all balls $B_R$ in $\mbb R^n$ of radius $R \geq 1$. We will, however, use 
known facts about the decay properties of the spherical means of Fourier 
transforms of measures to establish some of the lower bounds in Theorems 
\ref{main2} and \ref{main3}.

As the title of the present paper indicates, we are here mostly interested 
in studying the restriction problem in low fractal dimensions. In fact, 
this paper proves new weighted $L^2$-based restriction estimates, i.e.\ 
$L^2(S) \to L^q(Hdx)$ estimates, in $\mbb R^n$, $n \geq 3$, for 
$0 < \alpha \leq (n+1)/2$ (see Theorem \ref{main1}). In particular, if $X$
is as in the previous paragraph, then, taking $H$ to be the characteristic
function of $X$, we get new $L^2(S) \to L^q(X)$ restriction estimates.

In the plane, we prove new weighted $L^2$-based restriction estimates in 
the full range $0 < \alpha < 2$ of fractal dimensions. This is one 
important aspect of the approach we follow, because the results of 
\cite{dgowwz:falconer} and \cite{plms12046} do not include the plane. 

In the regime $0 < \alpha \leq n/2$, the best known weighted $L^2$-based 
restriction estimates were obtained in \cite{plms12046} for $n=3$, and in 
\cite{dgowwz:falconer} for $n \geq 3$. 

The authors of \cite{dgowwz:falconer} proved that in $\mbb R^n$, 
$n \geq 3$, to every $\epsilon > 0$ there is a constant $C_\epsilon$ such 
that
\begin{equation}
\label{knownest}
\int_{B(0,R)} |E f(x)|^{2n/(n-1)} H(x) dx \leq C_\epsilon R^\epsilon 
A_\alpha(H) \| f \|_{L^2(S)}^{2n/(n-1)}
\end{equation}
whenever $f \in L^2(S)$, $0 < \alpha \leq \max[n/2,2]$, $H$ is a weight of 
dimension $\alpha$, and $R \geq 1$. 
(See \cite[Theorem 1.8 and Remark 1.10]{dgowwz:falconer}.) In 
(\ref{knownest}), the constant $C_\epsilon$ is only allowed to depend on 
$\epsilon$, $\alpha$, $n$, and $S$. Estimates such as (\ref{knownest}), 
where one integrates $E f$ over a ball of radius $R$ instead of the entire 
$\mbb R^n$, are often referred to in the literature as {\it local 
restriction estimates}. Also, to emphasize the fact that the function $E f$ 
is being integrated over $\mbb R^n$, estimates such as (\ref{p=infty}) and 
(\ref{weightomastein}) are often called {\it global restriction estimates}.

In \cite{plms12046}, (\ref{knownest}) was proved in $\mbb R^3$, but only 
for $0 < \alpha \leq 3/2$ and with $A_\alpha(H)$ replaced by 
$\max[A_\alpha(H),A_\alpha(H)^{1/4}]$.

\begin{remark}
In \cite{dgowwz:falconer}, weights were defined in a slightly different way
than in this paper. For $0 < \alpha \leq n$, the authors of 
\cite{dgowwz:falconer} denoted by ${\mathcal F}_{\alpha,n}$ the set of all
non-negative measurable functions $H$ on $\mbb R^n$ that satisfy
$\int_{B(x_0,R)} H(x) dx \leq R^\alpha$ for all $x_0 \in \mbb R^n$ and 
$R \geq 1$, and wrote {\rm (\ref{knownest})} as:
\begin{equation}
\label{knownestprime}
\int_{B(0,R)} |E f(x)|^{2n/(n-1)} H(x) dx \leq C_\epsilon 
\| f \|_{L^2(S)}^{2n/(n-1)}
\end{equation}
whenever $f \in L^2(S)$, $0 < \alpha \leq n/2$, 
$H \in {\mathcal F}_{\alpha,n}$, and $R \geq 1$. The estimates {\rm 
(\ref{knownest})} and {\rm (\ref{knownestprime})} are equivalent. Clearly, 
{\rm (\ref{knownestprime})} implies {\rm (\ref{knownest})}. To establish 
the reverse implication, given $H \in {\mathcal F}_{\alpha,n}$ and 
$N \in \mbb N$, we let 
$H_N=N^{-1} \chi_{\{ x \in \mbb R^n: H(x) \leq N \}} H$, observe that
$A_\alpha(H_N) \leq N^{-1}$, apply {\rm (\ref{knownest})} with $H_N$, send 
$N$ to infinity, and arrive at {\rm (\ref{knownestprime})} via the monotone 
convergence theorem.
\end{remark}

The polynomial method for proving restriction estimates that was developed 
in \cite{guth:poly} and \cite{guth:poly2} in the non-weighted setting, and 
adapted in \cite{dgowwz:falconer} and \cite{plms12046} to the weighted 
setting, cannot prove restriction estimates for exponents $q < 2n/(n-1)$. 
In fact, the polynomial method has a key induction argument in the 
non-algebraic (or cellular) case in which the condition $q \geq 2n/(n-1)$ 
is crucial for closing the induction. Since one naturally expects $q$ to go 
below $2n/(n-1)$ as $\alpha$ becomes smaller (and, as one learns from 
Theorem \ref{main1}, turns out to indeed be the case), the polynomial 
method does not appear to be of much help in handling the 
$0 < \alpha < n/2$ case.

Also, the polynomial method proves local restriction estimates. In the
non-weighted setting, this is not a serious limitation, because one can turn
local restriction estimates into global ones by using Tao's 
$\epsilon$-removal lemma from \cite{tt:removal}. In the weighted setting, 
however, the $\epsilon$-removal lemma can only be applied in some special 
cases (see \cite[Section 2]{plms12046}).

In this paper, we prove global weighted $L^2$-based restriction estimates,
and we manage to go below the $2n/(n-1)$ threshold (when $0< \alpha < n/2$) 
as follows. We divide $0 < \alpha \leq n$ into three regimes:
$0 < \alpha < (n-1)/2$, $(n-1)/2 \leq \alpha \leq n/2$, and 
$n/2 < \alpha \leq n$. 

In the first regime, we prove an $L^2(S) \to L^q(Hdx)$ restriction estimate 
that holds for all $q > 2$, and which is sharp up to the endpoint $q=2$. In 
this part of the proof, we use ideas from Bourgain's paper 
\cite{jb:besicovitch} to utilize the decay we have on the Fourier 
transform of the surface measure $\sigma$ on $S$.

In the second regime, we prove an $L^2(S) \to L^q(Hdx)$ restriction 
estimate that holds for all $q > 4\alpha/(n-1)$. To obtain this result, we 
combine the result that we have obtained in the first regime with a 
corollary of Theorem \ref{whtineq} (see Corollary \ref{indconclusion} 
below).

Once we have established our restriction estimates in the regime 
$(n-1)/2 \leq \alpha \leq n/2$ via Corollary \ref{indconclusion}, we combine
them with the fractal restriction theorem of Du and Zhang 
\cite{dz:schrodinger3} to obtain new $L^2(S) \to L^q(Hdx)$ estimates in the 
regime $n/2 < \alpha \leq (n+1)/2$ for $n \geq 3$, and $n/2 < \alpha < n$ 
for $n=2$. As will become apparent during the proof of Theorem \ref{main1}, 
in the plane we will able to use the full strength of the theorem of Du and 
Zhang, but in $\mbb R^n$, $n \geq 3$, we will need to weaken the Du-Zhang 
theorem before we can combine it with the estimates from the second regime. 

In the opposite direction, it turns out that our $L^2$-based estimates 
actually improve on the fractal restriction theorem of 
\cite{dz:schrodinger3} when $0 < \alpha < n/2$ (see Corollary
\ref{coromain1}).
 
Here are the main results of this paper concerning $L^2$-based estimates.

\begin{thm}
\label{main1}
Suppose $n \geq 2$ and $S$ is a smooth compact hypersurface in $\mbb R^n$ 
with a strictly positive second fundamental form. Then
\begin{displaymath}
\int |E f(x)|^q H(x) dx \lct A_\alpha(H) \| f \|_{L^2(S)}^q
\end{displaymath}
for all functions $f \in L^2(S)$ and weights $H$ on $\mbb R^n$ of dimension 
$\alpha$ whenever
\begin{displaymath}
q > \left\{ \begin{array}{ll}
2 & \mbox{ if \, $0 < \alpha < (n-1)/2$,} \\
4\alpha/(n-1) & \mbox{ if \, $(n-1)/2 \leq \alpha \leq n/2$,} \\
2 \alpha + 2 & \mbox{ if \, $n = 2$ and $1 < \alpha < 2$,} \\
(2n/(n-1)) + 2 - (n/\alpha) & \mbox{ if \, $n \geq 3$ and 
                                     $n/2 < \alpha \leq (n+1)/2$.}
\end{array} \right.
\end{displaymath}
\end{thm}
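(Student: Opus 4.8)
\textbf{Proof proposal for Theorem \ref{main1}.}

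The plan is to treat the four ranges of $\alpha$ separately, building each on the previous one, exactly as the methodology section outlines. First, for the base case $0 < \alpha < (n-1)/2$, I would fix a weight $H$ of dimension $\alpha$ and aim for $\|Ef\|_{L^q(Hdx)} \lct A_\alpha(H)^{1/q}\|f\|_{L^2(S)}$ for every $q > 2$. The natural approach is to follow Bourgain's argument from \cite{jb:besicovitch}: write $|Ef|^2 = \widehat{fd\sigma} \cdot \overline{\widehat{fd\sigma}}$ and expand the $L^q$ integral, or more efficiently interpolate between the trivial $L^\infty$ bound $\|Ef\|_\infty \leq \|f\|_{L^1(S)} \lct \|f\|_{L^2(S)}$ and an $L^2(Hdx)$-type bound obtained from the decay $|\widehat{d\sigma}(x)| \lct (1+|x|)^{-(n-1)/2}$. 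The point is that for $q=2$ one computes $\int |Ef(x)|^2 H(x)\,dx = \int\!\!\int f(\xi)\overline{f(\eta)} \widehat{Hdx}(\eta-\xi)\,d\sigma(\xi)d\sigma(\eta)$ (formally), and the kernel $\widehat{Hdx}$ restricted to a neighborhood of $S-S$ can be controlled using $A_\alpha(H)$ together with a dyadic decomposition of physical space into annuli $|x| \sim 2^j$; on each annulus $\int_{|x|\sim 2^j} H \lct A_\alpha(H) 2^{j\alpha}$, and the stationary-phase decay of $\widehat{d\sigma}$ contributes $2^{-j(n-1)/2}$ per factor, so the series in $j$ converges precisely when $\alpha < (n-1)/2$. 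Summing the dyadic pieces and then interpolating with the $L^\infty$ endpoint yields every $q > 2$.

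Second, for the regime $(n-1)/2 \leq \alpha \leq n/2$, I would invoke Corollary \ref{indconclusion} (the stated consequence of the weighted H\"older inequality, Theorem \ref{whtineq}). The mechanism is: the first-regime estimate, rephrased in terms of the maximal operator, says roughly that $M_\beta(Ef) \lct \|f\|_{L^2(S)}$ for $\beta < (n-1)/2$ — or more precisely that a bound valid for all weights of dimension $\beta$ at exponent $q_1 > 2$ translates, via $M_\alpha F \leq M_\beta F$, into a bound for all weights of dimension $\alpha > \beta$ at a larger exponent. Tracking the exponent bookkeeping in $M_\alpha F = (\sup_H A_\alpha(H)^{-1}\int F^\alpha H)^{1/\alpha}$: an $L^{q_1}(Hdx)$ estimate with $A_\beta(H)^{1/q_1}$ on the right is the statement $M_\beta(|Ef|^{q_1/\beta}) \lct \|f\|_{L^2(S)}^{q_1/\beta}$ when $q_1 = \beta \cdot (\text{something})$; applying $M_\alpha \leq M_\beta$ and unwinding gives an $L^{q_2}(Hdx)$ estimate with $q_2/\alpha = q_1/\beta$, i.e. $q_2 = (\alpha/\beta) q_1$. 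Taking $\beta \to (n-1)/2$ and $q_1 \to 2$ yields $q_2 > 4\alpha/(n-1)$, which is the claimed threshold. I expect this exponent-chasing to be the place most likely to hide an off-by-a-factor error, so I would set up the correspondence between weighted restriction estimates and the $M_\alpha$ functional carefully at the outset (perhaps as a separate lemma).

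Third and last, for $n/2 < \alpha \leq (n+1)/2$ (with the plane $n=2$ getting the cleaner range $1 < \alpha < 2$ and exponent $2\alpha+2$), I would feed the second-regime estimate together with the Du--Zhang fractal restriction theorem — in the packaged form of Corollary \ref{highfracdimcoro} — into one more application of the interpolation/$M_\alpha$ machinery. Concretely, at $\alpha = n/2$ the second regime gives $q > 2n/(n-1)$, and Du--Zhang gives a global $L^2$-based estimate at some exponent associated to dimension $\alpha$ in the high range; interpolating these two (again through $M_\alpha \leq M_\beta$, now with $\beta = n/2$) produces a threshold that is linear in $1/\alpha$, matching $(2n/(n-1)) + 2 - (n/\alpha)$, which indeed equals $2n/(n-1)$ at $\alpha = n/2$ and increases as $\alpha$ grows toward $(n+1)/2$. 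For $n=2$ the honest Du--Zhang theorem is strong enough to be used directly without the weakening step, which is why the planar exponent $2\alpha+2$ is better-looking. The main obstacle throughout is not any single hard estimate — each ingredient is either classical (Bourgain-type decay arguments) or quoted (Theorem \ref{whtineq}, Du--Zhang) — but rather the correct assembly: verifying that the $M_\alpha$-interpolation genuinely upgrades the exponents as claimed, that the global (rather than local) nature of the estimates is preserved at each stage, and that the four stated thresholds are exactly what the chain of inequalities delivers, including continuity at the breakpoints $\alpha = (n-1)/2$ and $\alpha = n/2$.
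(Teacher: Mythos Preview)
Your treatment of the first two regimes matches the paper's. The base case $0<\alpha<(n-1)/2$ is Proposition~\ref{base}(i); the paper runs a level-set argument rather than a direct $L^2(Hdx)$ bound followed by interpolation, but the engine is the same dyadic sum $\sum_k 2^{-k(n-1)/2}\cdot 2^{k\alpha}$ that converges exactly when $\alpha<(n-1)/2$. The second regime is exactly Corollary~\ref{indconclusion} with $\beta\uparrow(n-1)/2$, giving $Q(\alpha,2)\le 4\alpha/(n-1)$.

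The gap is in the third and fourth regimes. You propose ``one more application of the interpolation/$M_\alpha$ machinery \ldots\ again through $M_\alpha\le M_\beta$, now with $\beta=n/2$.'' But invoking Corollary~\ref{indconclusion} at $\beta=n/2$ uses only the second-regime input $Q(n/2,2)\le 2n/(n-1)$ and outputs $Q(\alpha,2)\le(\alpha/\beta)\cdot 2n/(n-1)=4\alpha/(n-1)$; the Du--Zhang estimate never enters that inequality, and the resulting threshold is strictly worse than the one claimed (e.g.\ at $n=3$, $\alpha=2$ it gives $4$ rather than $7/2$). So the mechanism you name does not deliver $(2n/(n-1))+2-(n/\alpha)$.

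What the paper actually does is a different bootstrap. Given $H$ of dimension $\alpha>n/2$, Corollary~\ref{highfracdimcoro} (after a H\"older step when $n\ge 3$) shows that
\[
\mathcal H(x)\;:=\;\|f\|_{L^1(S)}^{-p}\,|Ef(x)|^{p}\,H(x)
\]
is itself a weight of some dimension $\alpha'\in[(n-1)/2,n/2]$, with $A_{\alpha'}(\mathcal H)\lct A_\alpha(H)\|f\|_{L^1(S)}^{-p}\|f\|_{L^2(S)}^{p}$. One then applies the second-regime restriction estimate to the weight $\mathcal H$ \emph{with the same function $f$}: $\int|Ef|^{q'}\mathcal H\,dx\lct A_{\alpha'}(\mathcal H)\|f\|_{L^2(S)}^{q'}$ for $q'>4\alpha'/(n-1)$, which unwinds to the desired bound at exponent $q=q'+p$. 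For $n=2$ one takes $p=2$, gets $\alpha'\approx\alpha/2$, and hence $q>4(\alpha/2)+2=2\alpha+2$. For $n\ge 3$ the choice $p=2$ lands $\alpha'$ below $(n-1)/2$; the H\"older step lets one take $p=2n/(n-1)-n/\alpha\in(0,2]$, which places $\alpha'$ at $(n-1)/2$ and gives $q>2+p=(2n/(n-1))+2-(n/\alpha)$. This ``$|Ef|^{p}H$ is a new weight of lower dimension'' step is the idea missing from your sketch; the weighted H\"older inequality $M_\alpha\le M_\beta$ is \emph{not} reused in this part of the argument.
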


We remark to the reader that the $q > (2n/(n-1))+ 2 - (n/\alpha)$ result of 
Theorem \ref{main1} is in fact true for $(n-1)/2 \leq \alpha \leq n$, but 
in the regime $(n+1)/2 < \alpha < n$ it becomes inferior to the estimate in 
Proposition \ref{simbase} that we state and prove in Section 6 below, and at
$\alpha=n$ it becomes inferior to the Tomas-Stein estimate 
(\ref{weightomastein}).

The reason for not stating Proposition \ref{simbase} here is due to the fact
its proof does not follow the strategy outlined above. Instead, the proof of
Proposition \ref{simbase} combines the Du-Zhang estimate from 
\cite{dz:schrodinger3} with the method that Bourgain developed in 
\cite{jb:besicovitch} to upgrade local restriction estimates to global ones.

The assumption that the surface $S$ has a strictly positive second 
fundamental form is only needed for the $q > 2\alpha +2$ and
$q > (2n/(n-1))+ 2 - (n/\alpha)$ results of Theorem \ref{main1} (because of 
the need to use the fractal restriction theorem of \cite{dz:schrodinger3}). 
For the other two results, we only need $S$ to have a nowhere vanishing 
Gaussian curvature, as will become clear during the proof of Theorem 
\ref{main1} (see also Remark \ref{curvrelax} and Proposition \ref{base}). 

The ranges of the exponent $q$ in Theorem \ref{main1} are all sharp (up to 
the endpoints) in $\mbb R^2$. In $\mbb R^n$, $n \geq 3$, we are only able 
to show that the $q > 2$ range is sharp (again up to the endpoint). These 
results are detailed in the following theorem.

\begin{thm}
\label{main2}
Let $S$ be the unit sphere in $\mbb R^n$. Suppose that to every 
$\epsilon>0$ there is a constant $C_\epsilon$ such that
\begin{displaymath}
\int_{B(0,R)} |Ef(x)|^q H(x) dx \leq C_\epsilon R^\epsilon A_\alpha(H)
\| f \|_{L^2(S)}^q	
\end{displaymath}
for all functions $f \in L^2(S)$, weights $H$ on $\mbb R^n$ of dimension 
$\alpha$, and radii $R \geq 1$. Then
\begin{displaymath}
q \geq \left\{ \begin{array}{ll}
2 & \mbox{ if $n \geq 3$ and $0 < \alpha \leq n-2$,} \\
(2\alpha+2)/(n-1) & \mbox{ if $n \geq 2$ and $n-2 < \alpha \leq n$,} \\
2 & \mbox{ if $n = 2$ and $0 < \alpha < 1/2$,} \\
4 \alpha & \mbox{ if $n = 2$ and $1/2 \leq \alpha \leq 1$.} 
\end{array} \right.
\end{displaymath} 
\end{thm}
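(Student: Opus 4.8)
The plan is to establish each of the four lower bounds on $q$ by exhibiting, for every large $R$, a function $f_R$ on the sphere and a weight $H_R$ of dimension $\alpha$ that together saturate the hypothesized estimate, forcing the stated inequality on $q$ in the limit $R \to \infty$. The templates for $f_R$ are the two standard extremizers already used in the examples of Section 1: the ``cap example'' $f_R = \chi_{C_R}$, where $C_R$ is an $R^{-1/2}$-cap on $S$, for which $\|f_R\|_{L^2(S)} \sim R^{-(n-1)/4}$ and $|Ef_R| \gtrsim R^{-(n-1)/2}$ on a dual rectangular slab $T_R$ of dimensions $R \times (\sqrt R)^{n-1}$; and the ``full-sphere example'' $f_R \equiv 1$, for which $\|f_R\|_{L^2(S)} \sim 1$ and, by the decay of $\widehat{d\sigma}$, one has good lower bounds for $|Ef_R|$ on annular regions. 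For each regime I would pick whichever of these two functions, paired with a weight $H_R$ chosen to be (a normalized multiple of) the characteristic function of a set of dimension $\alpha$ adapted to where $Ef_R$ is large, gives the strongest constraint.

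First I would handle the two planar cases, since these are exactly the computations carried out in the Section 1 examples, only run in reverse. For $n=2$ and $1/2 \le \alpha \le 1$: take $f_R$ the cap example and $H_R$ the normalized characteristic function of a set of dimension $\alpha$ contained in the slab $[0,R] \times [0,\sqrt R]$ but concentrated so that $\int_{B_R} H_R \sim R^\alpha$ while keeping $A_\alpha(H_R) \lesssim 1$ — e.g.\ a union of parallel unit-separated strips, or (as in the $X$-example) the region under $y = x^{-(2-2\alpha)}$ intersected with the slab. Plugging into the hypothesized estimate and tracking powers of $R$ yields $-(n-1)/2 + \alpha/q \le -(n-1)/4$ after dividing by $\|f_R\|_{L^2}$, which with $n=2$ rearranges to $q \ge 4\alpha$. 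For $n=2$, $0 < \alpha < 1/2$: here the cap example only gives $q \ge 4\alpha < 2$, which is weaker than Tomas–Stein, so instead I would use the full-sphere example $f_R \equiv 1$ together with $H_R$ supported on an annulus $\{|x| \sim R\}$ thinned to dimension $\alpha$; the known asymptotics $|\widehat{d\sigma}(x)| \sim |x|^{-(n-1)/2}$ and the fact that on such an annulus one can pack a dimension-$\alpha$ set with $\int H_R \sim R^\alpha$ give a lower bound forcing $q \ge 2$ — this is the step where I invoke ``known facts about the decay properties of the spherical means of Fourier transforms of measures'' alluded to just before the theorem.

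Next, the higher-dimensional cases. For $n \ge 2$ and $n-2 < \alpha \le n$: use the cap example again. The slab $T_R$ has dimensions $R \times (\sqrt R)^{n-1}$, so it has full Lebesgue measure $\sim R^{(n+1)/2}$ in a ball of radius $R$ only when $\alpha \ge (n+1)/2$; for smaller $\alpha$ one intersects $T_R$ with a thinned dimension-$\alpha$ subset. In either case $\int_{T_R} H_R \sim R^{\min(\alpha,(n+1)/2)}$, but actually for the sharp bound one wants $R^\alpha$ exactly, which is achievable precisely because $\alpha > n-2$ — the cross-section $(\sqrt R)^{n-1}$ has dimension-counting room for a set of dimension $\alpha - 1 > n-3$, i.e.\ for $\alpha-1$ up to $n-1$. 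Running the power count $-(n-1)/2 + \alpha/q \le -(n-1)/4$ gives $q \ge (2\alpha+2)/(n-1)$. Finally, for $n \ge 3$ and $0 < \alpha \le n-2$: the cap example degrades (its natural dimension budget saturates at $n-2$ in the transverse directions once we've used one unit in the long direction), so this is the regime where $q \ge 2$ is the right answer and one again falls back on the $f_R \equiv 1$ / annular-weight construction, exactly as in the planar $\alpha < 1/2$ subcase but now with $|x|^{-(n-1)/2}$ decay and an $(n-1)$-dimensional annulus of which one carves a dimension-$\alpha$ piece with $\int H_R \sim R^\alpha$; the resulting constraint is $-(n-1)/2 + \alpha/q \le 0$, and since on this regime the cap bound $(2\alpha+2)/(n-1) \le 2$ we only obtain $q \ge 2$ by a direct (non-sharp-in-$\alpha$) argument.

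The main obstacle I anticipate is the second regime's bookkeeping: one must verify that for $\alpha$ in the stated range a subset of the slab $T_R$ (respectively, of the annulus) can genuinely be chosen with $A_\alpha(H_R) \lesssim 1$ \emph{and} $\int_{B_R} H_R \gtrsim R^\alpha$ simultaneously — the two conditions pull against each other, and the precise threshold $\alpha = n-2$ (for the cap example) and the switch to the decay-based example below it must be justified by an honest packing/dimension argument rather than waved at. A secondary, more technical point: the hypothesis of the theorem has $R^\epsilon$ on the right-hand side, so each estimate below must be shown to grow strictly faster than any power $R^\epsilon$ of $R$ — i.e.\ the exponent gap must be strictly positive whenever $q$ is strictly below the claimed value — which forces one to be a little careful that the constructions produce a clean power of $R$ with no logarithmic losses. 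Neither difficulty is conceptually deep, but both require the dimension-counting to be laid out explicitly.
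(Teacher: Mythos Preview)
Your cap-example approach for the $q \geq 4\alpha$ bound (the planar case $1/2 \le \alpha \le 1$) is correct and matches the paper. But two of the other cases have genuine gaps.

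\textbf{The $q \ge (2\alpha+2)/(n-1)$ bound.} You assert that one can choose $H_R$ supported in the slab $T_R$ with $A_\alpha(H_R) \lesssim 1$ and $\int_{T_R} H_R \sim R^\alpha$. This is impossible for $\alpha > 1$: the slab $T_R$ is covered by $\sim \sqrt{R}$ balls of radius $\sqrt{R}$ lined up along the long axis, and the constraint $A_\alpha(H_R) \lesssim 1$ limits the mass in each to $(\sqrt{R})^\alpha$, so the total mass is at most $R^{(\alpha+1)/2}$, not $R^\alpha$. Consistently, your stated power count $-(n-1)/2 + \alpha/q \le -(n-1)/4$ actually solves to $q \ge 4\alpha/(n-1)$, not $(2\alpha+2)/(n-1)$ as you claim. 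The paper builds an explicit weight $\Omega_b = \cup_l \mathbb{R} \times [l^{1/b},1+l^{1/b}]^{n-1}$ (with $b=(\alpha-1)/(n-1)$) that saturates the $R^{(\alpha+1)/2}$ bound; plugging this mass into the power count gives exactly $(2\alpha+2)/(n-1)$.

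\textbf{The $q \ge 2$ bound.} Your plan of taking $f \equiv 1$ and $H_R$ a dimension-$\alpha$ set on the annulus $\{|x| \sim R\}$ does not work. Since $|\widehat{d\sigma}(x)| \sim |x|^{-(n-1)/2}$ there and $\int H_R \lesssim R^\alpha$, the resulting constraint is only $-(n-1)q/2 + \alpha \le \epsilon$, i.e.\ $q \ge 2\alpha/(n-1)$, which is strictly less than $2$ throughout the range $0 < \alpha \le n-2$ (and likewise for $n=2$, $\alpha < 1/2$). No choice of a single explicit weight paired with $f \equiv 1$ can do better, because $\widehat{d\sigma}$ genuinely decays like $|x|^{-(n-1)/2}$. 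The paper's argument is entirely different: it passes by duality (Proposition~\ref{restodecay}) from the assumed restriction estimate to a decay bound $\|\widehat{\mu}(R\,\cdot)\|_{L^2(S)} \lesssim R^{\epsilon-\alpha/q}$ valid for \emph{all} positive measures $\mu$ supported in $B(0,1)$ with $\mathcal{C}_\alpha(\mu) < \infty$, and then invokes the geometric-measure-theory fact (Proposition~\ref{basicgmt}) that such uniform decay cannot beat $R^{-\alpha/2}$ --- otherwise one could contradict the existence of sets of prescribed Hausdorff dimension via the energy integral. This forces $\alpha/q \le \alpha/2$, hence $q \ge 2$. The ``spherical means'' reference in the paper points to this machinery, not to a single-weight test.
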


Before starting the discussion of weighted $L^\infty$-based estimates, we 
make a couple of definitions and state a corollary of Theorem \ref{whtineq}.

For $0 < \alpha \leq n$ and $1 \leq p \leq \infty$, we define $Q(\alpha,p)$ 
to be the infimum of all numbers $q > 0$ such that the following holds:
there is a constant $C$ such that
\begin{displaymath}
\int |Ef(x)|^q H(x) dx \leq C A_\alpha(H) \| f \|_{L^p(S)}^q
\end{displaymath}
for all functions $f \in L^p(S)$ and weights $H$ on $\mbb R^n$ of dimension 
$\alpha$. The constant $C$ is allowed to depend on $n$, $\alpha$, $p$, and 
$q$; but, of course, not on $f$ or $H$. 

We also define $Q_{\rm loc}(\alpha,p)$ to be the infimum of all numbers 
$q > 0$ such that the following holds: to every $\epsilon > 0$ there is a 
constant $C_\epsilon$ such that
\begin{displaymath}
\int_{B(0,R)} |Ef(x)|^q H(x) dx \leq C_\epsilon R^\epsilon A_\alpha(H) 
\| f \|_{L^p(S)}^q
\end{displaymath}
for all functions $f \in L^p(S)$, weights $H$ on $\mbb R^n$ of dimension 
$\alpha$, and radii $R \geq 1$. The constant $C_\epsilon$ is allowed to 
depend on $\epsilon$, $n$, $\alpha$, $p$, and $q$.

For applications in the Fourier restriction context, it will be convenient
to state the following corollary of Theorem \ref{whtineq}.

\begin{coro}
\label{indconclusion}
Suppose $n \geq 2$ and $0 < \beta < \alpha \leq n$. Then 
\begin{displaymath}
\frac{Q(\alpha,p)}{\alpha} \leq \frac{Q(\beta,p)}{\beta}
\hspace{0.25in} \mbox{and} \hspace{0.25in}
\frac{Q_{\rm loc}(\alpha,p)}{\alpha} 
\leq \frac{Q_{\rm loc}(\beta,p)}{\beta}.
\end{displaymath}
\end{coro}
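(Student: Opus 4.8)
The plan is to deduce Corollary \ref{indconclusion} directly from Theorem \ref{whtineq}, with the only real content being an unwinding of the definitions of $Q(\alpha,p)$ and $Q_{\rm loc}(\alpha,p)$ in terms of the functional $M_\alpha$. Fix $p$ and a pair $0<\beta<\alpha\le n$, and set $t=Q(\beta,p)/\beta$; it suffices to show $Q(\alpha,p)\le \alpha t$, i.e.\ that for every $q>\alpha t$ there is a constant $C$ with $\int |Ef|^q H\,dx \le C A_\alpha(H)\|f\|_{L^p(S)}^q$ for all admissible $f,H$. The idea is to apply $M_\alpha$ to the function $F=|Ef|^{q/\alpha}$, observe $M_\alpha(|Ef|^{q/\alpha})^\alpha = \sup_H A_\alpha(H)^{-1}\int|Ef|^q H\,dx$, and use Theorem \ref{whtineq} to bound this by $M_\beta(|Ef|^{q/\alpha})^\alpha$. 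The latter is $\big(\sup_H A_\beta(H)^{-1}\int |Ef|^{q\beta/\alpha}H\,dx\big)^{\alpha/\beta}$, and since $q>\alpha t$ gives $q\beta/\alpha > \beta t = Q(\beta,p)$, the definition of $Q(\beta,p)$ furnishes a constant $C'$ with $\int|Ef|^{q\beta/\alpha}H\,dx\le C' A_\beta(H)\|f\|_{L^p(S)}^{q\beta/\alpha}$ for all admissible $f,H$; raising to the power $\alpha/\beta$ yields the desired bound on $M_\alpha(|Ef|^{q/\alpha})^\alpha$, and hence on $\int|Ef|^q H\,dx$ uniformly in $H$ of dimension $\alpha$. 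This proves the first inequality. Strictly speaking one should first dispose of the trivial cases where $Q(\beta,p)$ is infinite (then there is nothing to prove) or zero.

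For the local statement the argument is formally identical, but one must carry the radius $R$ and the $\epsilon$-loss through the same computation. Here I would introduce, for each fixed $R\ge 1$, the truncated weights obtained by restricting $H$ to $B(0,R)$: if $H$ has dimension $\alpha$ then so does $\chi_{B(0,R)}H$, with $A_\alpha(\chi_{B(0,R)}H)\le A_\alpha(H)$, so that $\int_{B(0,R)}|Ef|^q H\,dx = \int |Ef|^q(\chi_{B(0,R)}H)\,dx \le A_\alpha(\chi_{B(0,R)}H)\, M_\alpha(|Ef|^{q/\alpha})^\alpha$. Applying Theorem \ref{whtineq} exactly as before reduces matters to $M_\beta(|Ef|^{q/\alpha})^\alpha$, and now the local definition of $Q_{\rm loc}(\beta,p)$ is applied to the weights $\chi_{B(0,R)}H$ (which are supported in $B(0,R)$), producing the factor $C_\epsilon R^\epsilon A_\beta(\chi_{B(0,R)}H)^{\alpha/\beta}\le C_\epsilon R^\epsilon A_\alpha$-type bound after taking the $\alpha/\beta$ power; one absorbs the exponent change in $\epsilon$ by noting $(R^\epsilon)^{\alpha/\beta}=R^{\epsilon\alpha/\beta}$ and relabeling $\epsilon$. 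The key point making all of this work is that $M_\alpha F$, by its very definition as a supremum over weights, interchanges cleanly with the quantities defining $Q(\alpha,p)$ and $Q_{\rm loc}(\alpha,p)$, so no new analysis of the extension operator is needed.

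The only mild subtlety, and the step I would be most careful about, is the bookkeeping in passing from ``$q>\alpha\, Q(\beta,p)/\beta$'' to ``$q\beta/\alpha > Q(\beta,p)$'' and back through the power $\alpha/\beta$ — in particular making sure the implicit constants depend only on the allowed parameters $n,\alpha,\beta,p,q$ (and $\epsilon$ in the local case) and not on $f$ or $H$, which is immediate since $M_\alpha$ and $M_\beta$ are uniform over all weights by construction. A second point worth a line is that $F=|Ef|^{q/\alpha}$ is a genuinely non-negative Lebesgue measurable function on $\mbb R^n$ (indeed continuous, as $Ef$ is the Fourier transform of an $L^1$ density), so Theorem \ref{whtineq} applies verbatim. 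With these remarks in place the corollary follows in a few lines.
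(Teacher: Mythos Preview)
Your argument for the global inequality is correct and is essentially the same as the paper's: you parametrize by $q>\alpha\,Q(\beta,p)/\beta$ and take $F=|Ef|^{q/\alpha}$, whereas the paper parametrizes by $q>Q(\beta,p)$ and takes $F=|Ef|^{q/\beta}$; these are the same computation with the roles of the ``old'' and ``new'' exponents swapped.

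For the local inequality, however, there is a genuine gap. After writing
\[
\int_{B(0,R)}|Ef|^q H\,dx \le A_\alpha(\chi_{B(0,R)}H)\,\big(M_\alpha(|Ef|^{q/\alpha})\big)^\alpha
\]
and invoking Theorem~\ref{whtineq}, you are left with bounding $M_\beta(|Ef|^{q/\alpha})$. But by definition
\[
\big(M_\beta(|Ef|^{q/\alpha})\big)^\beta=\sup_{H'}\frac{1}{A_\beta(H')}\int_{\mbb R^n}|Ef|^{q\beta/\alpha}H'\,dx,
\]
where the supremum runs over \emph{all} weights $H'$ of dimension $\beta$ and the integral is over all of $\mbb R^n$. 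The hypothesis $q\beta/\alpha>Q_{\rm loc}(\beta,p)$ only controls $\int_{B(0,R')}|Ef|^{q\beta/\alpha}H'\,dx$ with an $R'^\epsilon$ loss; it says nothing about the global integral, and there is no reason the supremum defining $M_\beta$ should be nearly attained by weights supported in a fixed ball. Restricting attention to weights of the form $\chi_{B(0,R)}H$ does not help, because those weights are not the ones that compute $M_\beta(|Ef|^{q/\alpha})$.

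The fix is exactly what the paper does: truncate the \emph{function} rather than the weight. Apply Theorem~\ref{whtineq} to $F=\chi_{B(0,R)}|Ef|^{q/\alpha}$. Then
\[
\big(M_\beta F\big)^\beta=\sup_{H'}\frac{1}{A_\beta(H')}\int_{B(0,R)}|Ef|^{q\beta/\alpha}H'\,dx,
\]
which is precisely what the local hypothesis bounds by $C_\epsilon R^\epsilon\|f\|_{L^p(S)}^{q\beta/\alpha}$, and
\[
\big(M_\alpha F\big)^\alpha=\sup_{H}\frac{1}{A_\alpha(H)}\int_{B(0,R)}|Ef|^{q}H\,dx
\]
is exactly the quantity you want to control. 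With this single change your argument goes through unchanged, including the relabeling of $\epsilon$ after raising to the power $\alpha/\beta$.
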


In view of the fact that Corollary \ref{indconclusion} holds for all
$1 \leq p \leq \infty$, the strategy we outlined above for deriving 
restriction estimates by breaking $0< \alpha \leq n$ into different regimes 
works as well for $L^\infty$-based estimates as it did for $L^2$-based 
estimates. But, unlike the $L^2$-based situation, we are unable to prove a 
favorable $L^\infty$-based estimate for small $\alpha$. In fact, 
establishing a local $L^\infty(S) \to L^1(\chi_{B(0,R)}Hdx)$ restriction
estimate for $0 < \alpha < (n-1)/2$ would imply (via Corollary 
\ref{indconclusion}) a local $L^\infty(S) \to L^{2n/(n-1)}(B(0,R))$ 
estimate, which would essentially solve the restriction problem in 
$\mbb R^n$. So it becomes natural to investigate if such an estimate is 
feasible. For example, could $Q_{\rm loc}(\alpha,\infty) \to 0$ as 
$\alpha \to 0$? The next theorem tells us that 
$Q_{\rm loc}(\alpha,\infty) \geq (n-1)/n$ for small $\alpha$, but proving 
this lower bound turned out to be much harder than the author had initially 
expected. 

\begin{thm}
\label{main3}
Let $S$ be the unit sphere in $\mbb R^n$. Then
\begin{displaymath}
Q_{\rm loc}(\alpha,\infty) \geq \left\{ \begin{array}{ll}
(n-1)/n & \mbox{ if $n \geq 3$ and $0 < \alpha \leq (n-1)^2/(2n)$,} \\
2\alpha/(n-1) & \mbox{ if $n \geq 3$ and 
$(n-1)^2/(2n) \leq \alpha \leq n$,} 
\\
1/2 & \mbox{ if $n = 2$ and $0 < \alpha \leq 1/6$,} \\
3\alpha & \mbox{ if $n = 2$ and $1/6 \leq \alpha \leq 1$,} \\
\alpha + 2 & \mbox{ if $n=2$ and $1 \leq \alpha \leq 2$.} 
\end{array} \right.
\end{displaymath} 
\end{thm}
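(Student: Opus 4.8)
\textit{Proof proposal.} The plan is to prove each displayed inequality by producing, for every exponent $q$ strictly below the asserted value, a family of counterexamples: functions $f=f_R$ on $S$ with $\|f_R\|_{L^\infty(S)}=1$, weights $H=H_R$ on $\mbb R^n$ with $\sup_R A_\alpha(H_R)<\infty$, and radii $R\to\infty$, for which $\int_{B(0,R)}|Ef_R|^q H_R\,\gct\,R^{\delta_0}$ with some $\delta_0=\delta_0(q,\alpha,n)>0$. Such a family forces the constant $C_\epsilon$ in the definition of $Q_{\rm loc}(\alpha,\infty)$ to blow up once $\epsilon<\delta_0$, hence rules out $Q_{\rm loc}(\alpha,\infty)\le q$ for that $q$, and shows $Q_{\rm loc}(\alpha,\infty)$ is at least the asserted threshold. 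The function on the right of the theorem is the upper envelope of the thresholds coming from a few separate families of examples, so it suffices to supply one family per linear piece and then check agreement at the breakpoints: $\alpha=(n-1)^2/(2n)$, where $(n-1)/n=2\alpha/(n-1)$; $\alpha=n$, where $2\alpha/(n-1)=2n/(n-1)$ recovers the Knapp obstruction to the restriction conjecture; and, in the plane, $\alpha=1$, where $3\alpha=\alpha+2$, and $\alpha=2$, where $\alpha+2=4$.

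The first family is the Knapp cap. If $\kappa\subset S$ is a cap of angular radius $\delta$ and $f_R=\chi_\kappa$, then $\|f_R\|_{L^\infty(S)}=1$, while $|Ef_R|\gct\delta^{n-1}$ on the dual plank $\Pi$ of dimensions $\delta^{-1}\times\cdots\times\delta^{-1}\times\delta^{-2}$; take $R=\delta^{-2}$. Letting $H_R$ be the characteristic function of an $\alpha$-dimensional subset of $\Pi$ --- the whole plank when $\alpha$ is large, a fattened Frostman set strung along the long axis when $\alpha$ is small --- and computing $A_\alpha(H_R)$ by balancing the scale ranges $1\le r\le\delta^{-1}$, $\delta^{-1}\le r\le\delta^{-2}$, $r\ge\delta^{-2}$, one is forced to $q\ge 2\alpha/(n-1)$; at $\alpha=n$ this is exactly $q\ge 2n/(n-1)$. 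In the plane one optimises the position and dimension of the Frostman set inside $\Pi$ --- and for $\alpha$ near $2$ also brings in the $|\widehat{d\sigma}|\gct|x|^{-(n-1)/2}$ tail --- to sharpen the conclusion to $q\ge 3\alpha$ on $1/6\le\alpha\le 1$ and $q\ge\alpha+2$ on $1\le\alpha\le 2$.

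The second family exploits spherical averages. Since $\int_{S^{n-1}}|\widehat\mu(R\omega)|^2\,d\omega=\iint\widehat{d\sigma}(R(x-y))\,d\mu(x)\,d\mu(y)$ for any compactly supported $\mu$, and $\widehat{d\sigma}=E\mathbf 1_S$, the local weighted restriction estimate --- applied with $f=\mathbf 1_S$ and with $H$ a fattening of an $\alpha$-dimensional measure $\mu$, and using $|\widehat{d\sigma}|\gct|x|^{-(n-1)/2}$ on a positive-density part of every dyadic annulus --- controls the spherical $L^2$-average of $\widehat\mu$ over $\alpha$-dimensional measures. Feeding in the measures known to saturate the sharp spherical-average decay, whose exponent is $\alpha$ for small $\alpha$ and levels off at $(n-1)/2$ in the intermediate regime, and reading the resulting inequality backwards, one obtains the lower bounds with the $(n-1)/2$-flavour, reinforcing the $2\alpha/(n-1)$ piece and, in the plane, the $\alpha+2$ piece.

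The last piece --- the flat bound $Q_{\rm loc}(\alpha,\infty)\ge(n-1)/n$ for small $\alpha$ (and $\ge 1/2$ when $n=2$) --- is the \emph{real obstacle}, and I expect it to absorb most of the work, matching the warning in the introduction that this bound is much harder than one anticipates. The difficulty is structural: every single-cap, single-Frostman-set, or radial-weight construction above yields only $q\ge 2\alpha/(n-1)$, which collapses to $0$ as $\alpha\to0$, so one needs a construction whose output stays bounded away from $0$. The plan is to superpose many caps --- equivalently, to modulate $\mathbf 1_S$ by a sum of exponentials $\sum_l\varepsilon_l e^{2\pi i x_l\cdot\xi}$ attached to a $1$-separated, $\alpha$-dimensional point set $\{x_l\}\subset B(0,R)$ of cardinality $\sim R^\alpha$ --- so that $Ef$ is forced to be comparably large on the $\alpha$-dimensional set $\bigcup_l B(x_l,1)$, which lies far from the origin, while $\|f\|_{L^\infty(S)}$ stays controlled; the lower bound on $|Ef|$ is handled by a random choice of signs $\varepsilon_l$ and a second-moment/Paley--Zygmund argument that uses the polynomial decay of $\widehat{d\sigma}$ and the hypothesis $\alpha<n-1$ to absorb the off-diagonal contributions. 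One then optimises over the scale structure of $\{x_l\}$ and the grouping of the modulations against the constraint on $\|f\|_{L^\infty(S)}$; it is this optimisation --- and the genuinely delicate control of $\|f\|_{L^\infty(S)}$ via the band-limitedness of the modulations --- that pins the exponent at $(n-1)/n$. Finally, assembling the three families and taking, for each $\alpha$, the maximum of the thresholds they produce, a short computation confirms that this maximum equals the piecewise function in the statement, with the breakpoints as listed.
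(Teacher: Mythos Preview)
Your proposal diverges from the paper's proof at every one of the five lines, and several of the divergences are genuine gaps rather than alternative routes.

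\textbf{The bound $2\alpha/(n-1)$.} A single Knapp cap with an $\alpha$-dimensional Frostman set inside its dual plank does \emph{not} force $q\ge 2\alpha/(n-1)$ once $\alpha>1$. Optimizing over product-Frostman sets $A\times B\subset\Pi$ gives at best $\int_\Pi H\sim\delta^{-(\alpha+1)}$ when $\alpha\ge 1$, hence only $q\ge(\alpha+1)/(n-1)$. The paper instead takes $f=\mathbf 1_S$, uses $|\widehat{d\sigma}(x)|\sim|x|^{-(n-1)/2}$ to get $Q_{\rm loc}(n,\infty)\ge 2n/(n-1)$, and then propagates downward via Corollary~\ref{indconclusion}: $Q_{\rm loc}(\alpha,\infty)/\alpha\ge Q_{\rm loc}(n,\infty)/n$. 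Your ``second family'' gestures toward $f=\mathbf 1_S$, but you attribute the $2\alpha/(n-1)$ bound to the Knapp family, which is incorrect.

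\textbf{The bound $\alpha+2$ in the plane.} Neither the Knapp cap nor $f=\mathbf 1_S$ with the radial tail reaches $\alpha+2$: the former gives $q\ge\alpha+1$, the latter $q\ge 2\alpha$, and $\max(\alpha+1,2\alpha)<\alpha+2$ throughout $1<\alpha<2$. The paper's argument is indirect: assuming the estimate for some $r<\alpha+2$, Proposition~\ref{restodecay} converts it into a decay bound $\|\widehat\mu(R\,\cdot)\|_{L^1(\mbb S^1)}\lct R^{-\alpha/r}\sqrt{I_\alpha(\mu)}$, and then the Bennett--Vargas theorem (Theorem~\ref{lonemeans}) --- a nontrivial result about $L^1$ circular means with its own randomized construction --- forces $\alpha/r\le\alpha/(\alpha+2)$. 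You do not invoke Bennett--Vargas, and without it there is no known direct construction reaching $\alpha+2$.

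\textbf{The bound $3\alpha$ in the plane.} This is obtained in the paper from $Q_{\rm loc}(1,\infty)\ge 3$ (i.e.\ the $\alpha+2$ bound at $\alpha=1$, which already required Bennett--Vargas) together with Corollary~\ref{indconclusion}. Your ``optimise the position and dimension of the Frostman set inside $\Pi$'' does not produce $3\alpha$; a single cap gives at most $2\alpha$ in this range.

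\textbf{The bound $(n-1)/n$.} Here the paper's argument is \emph{entirely different in kind} from your random-modulation proposal. The paper does not build a counterexample at all. Instead it assumes the estimate holds with exponent $r$, uses Cauchy--Schwarz to observe that $\mathcal H:=\|f\|_{L^1(S)}^{-r/2}|Ef|^{r/2}H$ is a weight of dimension $\beta=(\alpha+\epsilon)/2<(n-1)/2$, feeds $\mathcal H$ into part~(ii) of Proposition~\ref{base} to obtain a hybrid $L^2/L^\infty$ estimate at exponent $q+(r/2)$ with $q$ just above $q_0=(n+1-2\beta)/(n-2\beta)$, transfers this via Lemma~\ref{bdmubyh} to a decay statement $|\int\widehat\mu(\rho\xi)f\,d\sigma|\lct\rho^{-\gamma/p}\|f\|_{L^2}^{q/p}\|f\|_{L^\infty}^{1-q/p}$, runs a level-set argument to extract an $L^2(\mbb S^{n-1})$ decay rate for $\widehat\mu$, and finally invokes Proposition~\ref{basicgmt} to force $r\ge 2-q_0$; sending $\alpha\to 0$ gives $r\ge(n-1)/n$. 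Your Paley--Zygmund scheme, by contrast, is underspecified precisely where it matters: with $N\sim R^\alpha$ random modulations one gets $\|f\|_{L^\infty(S)}\sim R^{\alpha/2}$, and if the lower bound $|Ef|\gct 1$ on $\sim R^\alpha$ unit balls could be made to interact with $\int|Ef|^q$ for $q<1$ (which is the regime in question, since $(n-1)/n<1$), the arithmetic would suggest $q\ge 2$, not $(n-1)/n$ --- but for $q<2$ one cannot pass from pointwise or $L^2$-ball lower bounds to $L^q$-ball lower bounds without a loss that exactly cancels the gain. You assert that an ``optimisation\ldots pins the exponent at $(n-1)/n$'' without indicating what is being optimised or why that number emerges.
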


The proofs of the first and last inequalities in Theorem \ref{main3} (as 
well as the first inequality in Theorem \ref{main2}) involve some geometric 
measure theory. In particular, the last inequality depends on a theorem of 
Bennett and Vargas \cite{bv:randomised} about the decay of the $L^1$ 
circular means of Fourier transforms of measures.

The rest of the paper is organized as follows. In the next section, we give 
some interesting examples. In Section 4, we discuss the fractal restriction
theorem of Du and Zhang \cite{dz:schrodinger3} and show how Theorem 
\ref{main1} improves on it when $0 < \alpha < n/2$. Section 5 is dedicated 
to the proofs of Theorem \ref{whtineq} and Corollary \ref{indconclusion}. 
Section 6 proves estimates in the regimes $0 < \alpha < (n-1)/2$ and
$(n+1)/2 < \alpha < n$. The last four sections of the paper are the proofs 
of Theorems \ref{main1}, \ref{main2}, and \ref{main3}.

\section{Examples}

\subsection{Restriction estimates in neighborhoods of algebraic varieties}

Let $n \geq 2$, $Z$ be a real algebraic variety in $\mbb R^n$ of dimension 
$k$ that is defined by polynomials of degree at most $D$, and $N_\rho(Z)$ 
be the $\rho$-neighborhood of $Z$. Then a theorem of Wongkew 
\cite{w:tubular} tells us that
\begin{displaymath}
|N_\rho(Z) \cap B_R| \leq C_n (D \rho)^{n-k} R^k
\end{displaymath}
for any ball $B_R \subset \mbb R^n$ of radius $R > 0$, where $C_n$ is a 
constant that depends only on $n$. This inequality implies that the 
characteristic function of $N_\rho(Z)$ is a weight on $\mbb R^n$ of 
dimension $k$. Moreover, $A_k(\chi_{N_\rho(Z)}) \leq C_n (D \rho)^{n-k}$. 
Therefore, if $n \geq 3$ and $1 \leq k \leq (n+1)/2$, then Theorem 
\ref{main1} applies and tells us that
\begin{displaymath}
\int_{N_\rho(Z)} |E f(x)|^q dx \lct (D \rho)^{n-k} \| f \|_{L^2(S)}^q
\end{displaymath}
for all $f \in L^2(S)$ whenever
\begin{displaymath}
q > \left\{ \begin{array}{ll}
\max[2, 4k/(n-1)] & \mbox{ if \, $1 \leq k \leq n/2$,} \\
(2n/(n-1))+2-(n/k) & \mbox{ if \, $n/2 < k \leq (n+1)/2$.} 
\end{array} \right.
\end{displaymath}
Furthermore, if $n=2$ and $Z$ is the zero set of a polynomial $P$ in two 
real variables of degree $D \geq 1$, then Theorem \ref{main1} gives the 
estimate
\begin{displaymath}
\int_{N_\rho(Z)} |E f(x)|^q dx \lct D \rho \, \| f \|_{L^2(S)}^q
\hspace{0.5in} (q > 4).
\end{displaymath}
We also refer the reader to the example at the end of Section 6 for a 
similar estimate in higher dimensions.

\subsection{An example of the $\alpha=n/2$ case}

As a second example, we consider the set $\Omega \subset \mbb R^n$ given by
\begin{displaymath}
\Omega= \{ x=(\bar{x},x_n) \in \mbb R^{n-1} \times \mbb R :
|x_n| \leq |\bar{x}|^{1-(n/2)} \}.
\end{displaymath}
It is easy to see that the characteristic function of $\Omega$ is a weight 
on $\mbb R^n$ of dimension $n/2$ and with $A_{n/2}(\chi_\Omega) \lct 1$. In 
fact, if $\bar{x}_0 \in \mbb R^{n-1}$ and
$R \geq 1$, then
\begin{displaymath}
\int_{B(\bar{x}_0,R)} |\bar{x}|^{1-(n/2)} d\bar{x} \leq \left\{
\begin{array}{ll}
\int_{B(0,11R)} |\bar{x}|^{1-(n/2)} d\bar{x} \lct R^{n/2}	& 
\mbox{ if $|\bar{x}_0| \leq 10R$,} \\ \\
(|\bar{x}_0|-R)^{1-(n/2)} \int_{B(\bar{x}_0,R)} dx \lct R^{n/2} & 
\mbox{ if $|\bar{x}_0| > 10R$.}
\end{array} \right.
\end{displaymath}
Therefore, (\ref{knownest}) gives us the local estimate
\begin{equation}
\label{localomega}
\int_{\Omega \cap B_R} |E f(x)|^{2n/(n-1)} dx
\lct R^\epsilon \| f \|_{L^2(S)}^{2n/(n-1)},
\end{equation}
whereas Theorem \ref{main1} gives us the global estimate
\begin{equation}
\label{globalomega}
\int_\Omega |E f(x)|^q dx \lct \| f \|_{L^2(S)}^q
\hspace{0.5in} (q > 2n/(n-1)).
\end{equation}
(See Remark \ref{weightedremoval} following the next example.)

\subsection{An example in $\mbb R^3$}

Our third example, which takes place in $\mbb R^3$, needs the following 
result from \cite{dgowwz:falconer}:
\begin{displaymath}
\int_{B_R} |E f(x)|^3 H(x) dx \leq C_\epsilon R^\epsilon 
A_\alpha(H) \| f \|_{L^2(S)}^3
\end{displaymath}
for all functions $f \in L^2(S)$ and weights $H$ on $\mbb R^3$ of dimension 
$3/2 \leq \alpha \leq 2$. This local estimate implies that 
$Q_{\rm loc}(2,2) \leq 3$, and so Corollary \ref{indconclusion} (applied 
with $\beta=2$ and $n=3$) implies that
\begin{equation}
\label{knownestr3}
Q_{\rm loc}(\alpha,2) \leq (3/2) \alpha \hspace{0.5in} 
(2 < \alpha \leq (11-\sqrt{13})/3).
\end{equation}
(Proposition \ref{simbase} gives a better result when
$(11-\sqrt{13})/3 < \alpha < 3$, and  so does (\ref{weightomastein}) 
when $\alpha=3$.) 
 
Now, for $0 < b \leq 1$, we define
\begin{displaymath}
\Omega_b = \cup_{l=1}^\infty \mbb R^2 \times [l^{1/b},1+l^{1/b}]
\end{displaymath}
and we observe that the characteristic function of $\Omega_b$ is a weight 
on $\mbb R^3$ of dimension $2+b$ and with 
$A_{2+b}(\chi_{\Omega_b}) \lct 1$. So (\ref{knownestr3}) tells us that
\begin{displaymath}
\int_{\Omega_b \cap B_R} |E f(x)|^q dx \lct R^\epsilon \| f \|_{L^2(S)}^q
\end{displaymath}
for all $f \in L^2(S)$ whenever $b \leq (5-\sqrt{13})/3$ and 
$q \geq (3/2)(2+b)$. Combining this local estimate with the 
$\epsilon$-removal argument of \cite[Corollary 2.1]{plms12046} (which is a 
small modification on Tao's $\epsilon$-removal lemma from \cite{tt:removal} 
that works for some special weights such as the characteristic function of 
$\Omega_b$), we conclude that the global estimate
\begin{displaymath}
\int_{\Omega_b} |E f(x)|^q dx \lct \| f \|_{L^2(S)}^q
\end{displaymath}
holds whenever $b \leq (5-\sqrt{13})/3$ and $q > (3/2)(2+b)$.

\begin{remark}
\label{weightedremoval}
The $\epsilon$-removal argument of \cite[Corollary 2.1]{plms12046} does not 
apply when $H$ is the characteristic function of the set $\Omega$ in the 
second example (Subsection 3.2), so the author is not sure whether {\rm 
(\ref{globalomega})} can be derived from {\rm (\ref{localomega})} when 
$n \geq 3$.
\end{remark}

\subsection{The two examples from the Introduction -- revisited}

For $0 \leq b \leq 1$, we define
\begin{displaymath}
X_b = \{ (x,y) \in \mbb R^2 : x > 0 \mbox{ and } 0 \leq y \leq x^{-b} \}.
\end{displaymath}
If $b < 1$, then the characteristic function of $X_b$ is a weight on 
$\mbb R^2$ of dimension $1-b$, and $A_{1-b}(\chi_{X_b}) \lct 1$. If $b=1$, 
then $\chi_{X_b}$ is a weight on $\mbb R^2$ of dimension $\alpha$ and 
$A_\alpha(\chi_{X_b}) \lct 1$ for all $0 < \alpha \leq 2$. Therefore, by 
Theorem \ref{main1}, we have
\begin{equation}
\label{xbest}
\int_{X_b} |E f(x)|^q dx \lct \| f \|_{L^2(S)}^q
\hspace{0.5in} (q > \max[2, 4(1-b)]).
\end{equation}
When $b=1$, we also have
\begin{displaymath}
\int_{X_1 \cap B_R} |E f(x)|^q dx \lct (\log R) \| f \|_{L^2(S)}^q
\hspace{0.5in} (q > 0, R \geq 1),
\end{displaymath}
but we do not have an $\epsilon$-removal theorem that would turn this local 
estimate into a global one.

We saw in the Introduction that the range of $q$ in (\ref{xbest}) is sharp 
(up to the end point $q=4(1-b)$) when $b=1/4$. We will see during the proof 
of Theorem \ref{main2} that this range of $q$ is actually sharp for all
$0 \leq b \leq 1/2$.

Our last example also takes place in the plane. For $0 < b < 1$, we define
\begin{displaymath}
Y_b = \cup_{l=1}^\infty \mbb R \times [l^{1/b},1+l^{1/b}].
\end{displaymath}
Then the characteristic function of $Y_b$ is a weight on $\mbb R^2$ of 
dimension $1+b$ and with $A_{1+b}(\chi_{Y_b}) \lct 1$. So Theorem 
\ref{main1} gives the estimate
\begin{equation}
\label{ybest}
\int_{Y_b} |E f(x)|^q dx \lct \| f \|_{L^2(S)}^q
\hspace{0.5in} (q > 4+2b).
\end{equation}

We saw in the Introduction that the range of $q$ in (\ref{ybest}) is sharp 
(up to the endpoint $q=4+2b$) when $b=1/2$. We will see during the proof of 
Theorem \ref{main2} that this range of $q$ is actually sharp for all
$0 < b < 1$. 
 
\section{On a fractal restriction theorem of Du and Zhang}

Throughout this section, we denote a cube in $\mbb R^n$ of center $x$ and
side-length $r$ by $\widetilde{B}(x,r)$. 

Let ${\mathcal P} = \{ \xi \in \mbb R^n : \xi_n = \xi_1^2 + \ldots + 
\xi_{n-1}^2 \leq 1 \}$ be the unit paraboloid in $\mbb R^n$, and 
$E_{\mathcal P}$ the extension operator associated with $\mathcal P$. In a 
recent paper \cite{dz:schrodinger3}, the following interesting theorem was 
proved.

\begin{alphthm}[{\cite[Corollary 1.6]{dz:schrodinger3}}]
\label{highfracdim}
Suppose $n \geq 2$, $1 \leq \alpha \leq n$, $R \geq 1$, 
$X= \cup_k \widetilde{B}_k$ is a union of lattice unit cubes in 
$\widetilde{B}(0,R) \subset \mbb R^n$, and
\begin{displaymath}
\gamma = \sup \frac{\#\{ \widetilde{B}_k : \widetilde{B}_k \subset 
\widetilde{B}(x',r) \}}{r^\alpha},
\end{displaymath}
where the sup is taken over all pairs 
$(x',r) \in \mbb R^n \times [1,\infty)$ satisfying 
$\widetilde{B}(x',r) \subset \widetilde{B}(0,R)$. Then to every 
$\epsilon > 0$ there is a constant $C_\epsilon$ such that
\begin{displaymath}
\| E_{\mathcal P} f \|_{L^2(X)} \leq C_\epsilon R^\epsilon \gamma^{1/n}
R^{\alpha/(2n)} \| f \|_{L^2({\mathcal P})}
\end{displaymath}
for all $f \in L^2({\mathcal P})$.
\end{alphthm}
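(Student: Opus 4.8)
Since the statement is quoted verbatim from \cite{dz:schrodinger3}, what I can sensibly offer is a description of the Du--Zhang argument, which is a polynomial-partitioning proof in the style of Guth \cite{guth:poly,guth:poly2}. I do not expect a substantially shorter route to the sharp factor $\gamma^{1/n}R^{\alpha/(2n)}$: the elementary bounds are far weaker (Cauchy--Schwarz against $|X| \leq \gamma R^\alpha$ together with $\|E_{\mathcal P}f\|_\infty \lct \|f\|_{L^2(\mathcal P)}$ gives only $\gamma^{1/2}R^{\alpha/2}$, and summing the unit-scale $L^2$ restriction estimate over the cubes of $X$ is comparably lossy), and the exponent $1/n$ is exactly the dimension of the ambient space that the partitioning sees.

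\emph{Reductions.} First I would transfer to a function $g$ on $\mbb R^{n-1}$ with $\widehat g$ supported in the unit ball via the graph parametrization of $\mathcal P$, and observe that it suffices to prove the estimate with the stated $R^\epsilon$ loss, so that the inequality is self-improving and can be run through an induction on the radius. After rescaling one may normalize $\gamma$. A wave-packet decomposition of $E_{\mathcal P}f$ at scale $R^{1/2}$ into $R^{1/2}\times\cdots\times R^{1/2}\times R$ tubes, followed by pigeonholing, reduces matters to bounding $E_{\mathcal P}f$ on a union of $R^{1/2}$-cubes on which its $L^2$ norm is, up to $R^\epsilon$, constant; this is the ``refined Strichartz'' setup that is the technical heart of \cite{dz:schrodinger3}.

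\emph{Broad--narrow and polynomial partitioning.} Next I would apply a Bourgain--Guth broad--narrow decomposition. On the narrow cubes $E_{\mathcal P}f$ is dominated by caps lying in a thin slab, so it effectively lives over a paraboloid of one lower dimension and is handled by the inductive hypothesis in that lower dimension, combined with Bourgain--Demeter $\ell^2$-decoupling for the paraboloid. On the broad part --- the main term --- apply Guth's polynomial partitioning: choose a polynomial $P$ of degree $D=R^\delta$ with $\delta$ small so that $Z(P)$ cuts $\widetilde B(0,R)$ into $\sim D^n$ cells, each carrying a $\sim D^{-n}$ share of the weighted $L^2$ mass, together with a wall $W=N_{R^{1/2}}(Z(P))$. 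In the cellular case each $R^{1/2}$-tube meets $\lct D$ cells, so $\ell^2$-orthogonality of the wave packets costs only a factor of $D$; intersecting $X$ with a cell strictly lowers its fractal parameter, and applying the inductive hypothesis on each cell and summing closes the induction precisely because the exponents $1/n$ and $\alpha/(2n)$ are those for which the $D^n$ cells, the orthogonality factor, and the improved per-cell fractal parameter balance. In the algebraic case most of the mass lies in $W$ and the relevant tubes are tangent to $Z(P)$ at scale $R^{1/2}$; here one invokes the polynomial Wolff axiom (in the form supplied by Guth's polynomial method) to bound the number of mutually tangent tubes through a point, which reduces the count to one on an $(n-1)$-dimensional variety and is dispatched by induction on dimension. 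Finally one optimizes $\delta$, absorbs the $O(\log R)$ accumulated $R^\epsilon$ factors, and checks the base case $R=O(1)$ by Plancherel on a single cube.

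\emph{Main obstacle.} The hard part is the algebraic/tangential case: simultaneously controlling wave packets tangent to a high-degree variety and propagating the fractal hypothesis --- which is \emph{not} scale-invariant --- correctly through the partition, all while running the double induction on $(n,\log R)$. This is exactly where the Du--Zhang argument, leaning on Guth's polynomial partitioning \cite{guth:poly,guth:poly2} and on the polynomial Wolff axioms, does its real work; the remaining steps are bookkeeping.
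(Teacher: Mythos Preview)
The paper under review does not give its own proof of this statement: Theorem~\ref{highfracdim} is quoted verbatim from \cite[Corollary~1.6]{dz:schrodinger3} and used as a black box (the paper only proves the weight-form consequence, Corollary~\ref{highfracdimcoro}). You correctly recognize this at the start of your proposal, so in the sense of the assignment there is nothing to compare.

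That said, your sketch of the Du--Zhang argument is not quite faithful to \cite{dz:schrodinger3}. Their proof does \emph{not} run Guth-style polynomial partitioning with cells, walls, and polynomial Wolff axioms as you describe. The actual mechanism is: a Bourgain--Guth broad--narrow decomposition; in the narrow case, parabolic rescaling plus induction on scales and lower-dimensional $\ell^2$-decoupling; in the broad case, a \emph{multilinear refined Strichartz estimate}, which is itself proved by induction on scales combined with Bourgain--Demeter decoupling and the multilinear Kakeya inequality of Bennett--Carbery--Tao/Guth. The exponent $1/n$ on $\gamma$ and $\alpha/(2n)$ on $R$ emerge from balancing the multilinear gain against the fractal count, not from a cell-count argument. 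So while your outline is a plausible alternative route (and polynomial partitioning has been used for related weighted estimates, e.g.\ in \cite{dgowwz:falconer}), it misrepresents what Du and Zhang actually do; if you want to summarize their proof you should replace the partitioning discussion with the multilinear refined Strichartz ingredient.
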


Theorem \ref{highfracdim} is of interest to us in two ways. 

First, Theorem \ref{main1} of this paper allows us to improve the exponent 
of $R$ in Theorem \ref{highfracdim} from $\alpha/(2n)$ to $0$ for
$0 < \alpha \leq (n-1)/2$, and to $(\alpha/2)-((n-1)/4)$ for 
$(n-1)/2 < \alpha < n/2$. (We note to the reader that Theorem 
\ref{highfracdim} is a corollary of the main theorem in 
\cite{dz:schrodinger3} (see \cite[Theorem 1.3]{dz:schrodinger3}); our 
results do not appear to improve on the main theorem.)

Second, Theorem \ref{highfracdim} has a corollary that will help us to 
prove Theorem \ref{main1} in the regime $n/2 < \alpha \leq (n+1)/2$, as 
well as Proposition \ref{simbase}. 

We start by proving the corollary to Theorem \ref{main1} that improves on
Theorem \ref{highfracdim} when $0 < \alpha < n/2$.

\begin{coro}
\label{coromain1}
Suppose $n$, $R$, $X$, and $\gamma$ are as in Theorem \ref{highfracdim}.
Also, suppose that $S$ is a smooth compact hypersurface in $\mbb R^n$ with a 
nowhere vanishing Gaussian curvature, and $E=E_S$ is the extension operator
on $S$. Then to every $\epsilon > 0$ there is a constant $C_\epsilon$ such 
that
\begin{displaymath}
\| E f \|_{L^2(X)} \leq C_\epsilon R^\epsilon \gamma^{1/2} R^{e(\alpha)}
\| f \|_{L^2({\mathcal S})}
\end{displaymath}
for all $f \in L^2({\mathcal S})$, where
\begin{displaymath}
e(\alpha)= \left\{ \begin{array}{ll}
0 & \mbox{ if $0 < \alpha \leq (n-1)/2$,} \\
(\alpha/2)-((n-1)/4) & \mbox{ if $(n-1)/2 < \alpha \leq n/2$.}
\end{array} \right.
\end{displaymath}
\end{coro}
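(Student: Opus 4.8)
The plan is to deduce Corollary \ref{coromain1} from Theorem \ref{main1} by converting the combinatorial hypothesis on $X$ into a statement about the weight $H = \chi_X$, and then matching exponents. First I would observe that since $X = \cup_k \widetilde{B}_k$ is a union of lattice unit cubes inside $\widetilde{B}(0,R)$, the characteristic function $\chi_X$ satisfies $\int_{B(x_0,r)} \chi_X(x)\,dx \lesssim \#\{\widetilde{B}_k : \widetilde{B}_k \cap B(x_0,r) \neq \emptyset\}$, and a cube meeting $B(x_0,r)$ is contained in $\widetilde{B}(x_0, r + C\sqrt{n})$; enlarging $r$ by a bounded factor (and using $r \geq 1$) this is $\lesssim \gamma\, r^\alpha$. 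For balls of radius $r$ larger than $R$ we use that $X \subset \widetilde{B}(0,R)$ so the integral is $\lesssim \gamma R^\alpha \leq \gamma r^\alpha$. Hence $A_\alpha(\chi_X) \lesssim \gamma$, i.e.\ $\chi_X$ is a weight of dimension $\alpha$ with $A_\alpha(\chi_X) \lesssim \gamma$.

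Next I would reduce matters to the relevant range of $\alpha$. Corollary \ref{coromain1} only claims something for $0 < \alpha \leq n/2$ (with $e(\alpha) = 0$ on $(0,(n-1)/2]$ and $e(\alpha) = (\alpha/2) - (n-1)/4$ on $((n-1)/2, n/2]$). Apply Theorem \ref{main1} with $p = 2$, $H = \chi_X$, and a suitable exponent $q = q(\alpha)$: for $\alpha \in (0,(n-1)/2)$ take any $q$ slightly above $2$, and for $\alpha \in [(n-1)/2, n/2]$ take $q$ slightly above $4\alpha/(n-1)$. This gives $\|Ef\|_{L^q(X)} \lesssim A_\alpha(\chi_X)^{1/q} \|f\|_{L^2(S)} \lesssim \gamma^{1/q}\|f\|_{L^2(S)}$. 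To pass from the $L^q(X)$ bound to the desired $L^2(X)$ bound, apply Hölder's inequality on the set $X$: if $q \geq 2$ then $\|Ef\|_{L^2(X)} \leq |X|^{(1/2)-(1/q)} \|Ef\|_{L^q(X)}$, and $|X| = \int_{\widetilde{B}(0,R)} \chi_X \lesssim \gamma R^\alpha$ (taking $x_0 = 0$, $r \sim R$ in the weight bound, or simply the given hypothesis). Combining, $\|Ef\|_{L^2(X)} \lesssim \gamma^{(1/2)-(1/q)} R^{\alpha((1/2)-(1/q))} \gamma^{1/q}\|f\|_{L^2(S)} = \gamma^{1/2} R^{\alpha((1/2)-(1/q))}\|f\|_{L^2(S)}$.

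It then remains to check that $\alpha((1/2) - (1/q(\alpha))) \leq e(\alpha) + \epsilon'$ for the stated choices of $q$, where the extra $\epsilon'$ is absorbed into the $R^\epsilon$ factor (this is why the statement carries $C_\epsilon R^\epsilon$: $q$ can only be taken down to, not at, its critical value). For $\alpha < (n-1)/2$, letting $q \to 2^+$ gives $\alpha((1/2)-(1/q)) \to 0 = e(\alpha)$. For $(n-1)/2 \leq \alpha \leq n/2$, letting $q \to (4\alpha/(n-1))^+$ gives $\alpha\big((1/2) - (n-1)/(4\alpha)\big) = (\alpha/2) - (n-1)/4 = e(\alpha)$. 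So for any $\epsilon > 0$ one chooses $q$ close enough to its critical value that the power of $R$ is at most $e(\alpha) + \epsilon$, yielding the corollary with that $\epsilon$.

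The argument is essentially bookkeeping once the reduction $A_\alpha(\chi_X) \lesssim \gamma$ is in place, so I do not expect a genuine obstacle; the only point needing a little care is the boundary behavior of the weight bound (cubes near $\partial \widetilde{B}(0,R)$ and balls of radius $r > R$), and the fact that the critical $q$ is not attained, which forces the harmless $R^\epsilon$ loss and matches the form of Theorem \ref{highfracdim}. One should also note that Theorem \ref{main1} as stated requires a strictly positive second fundamental form for some of its ranges, but the two ranges used here ($q > 2$ and $q > 4\alpha/(n-1)$) are exactly the ones the excerpt says need only nowhere-vanishing Gaussian curvature, consistent with the hypothesis on $S$ in Corollary \ref{coromain1}.
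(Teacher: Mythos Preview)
Your proposal is correct and follows essentially the same route as the paper: you show $A_\alpha(\chi_X)\lesssim\gamma$, invoke the two relevant ranges of Theorem~\ref{main1} (which, as you rightly note, require only nowhere-vanishing Gaussian curvature), and then use H\"older together with $|X|\lesssim\gamma R^\alpha$ to descend from $L^q$ to $L^2$. The only cosmetic difference is that the paper first packages the global estimate for $q>q(\alpha)$ as a \emph{local} estimate at the endpoint $q(\alpha)$ with an $R^\epsilon$ loss and then applies H\"older once, whereas you keep $q$ slightly above critical throughout and absorb the surplus exponent into $R^\epsilon$ at the end; the computations are the same.
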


\begin{proof}
Theorem \ref{main1} provides us with the local estimate
\begin{displaymath}
\int_{B(0,R)} |Ef(x)|^{q(\alpha)} H(x) dx \leq C_\epsilon R^\epsilon 
A_\alpha(H) \| f \|_{L^2(S)}^{q(\alpha)}
\end{displaymath}
with
\begin{displaymath}
q(\alpha)= \left\{ \begin{array}{ll}
2 & \mbox{ if $0 < \alpha \leq (n-1)/2$,} \\
4\alpha/(n-1) & \mbox{ if $(n-1)/2 < \alpha \leq n/2$.}
\end{array} \right.
\end{displaymath}

We let $H$ be the characteristic function of $X$. By the definition of
$\gamma$, we have
\begin{displaymath}
\int_{\widetilde{B}(x_0,r)} H(x) dx \leq \gamma \, (r+2)^\alpha
                                    \leq \gamma \, (3r)^\alpha
\end{displaymath}
for all $x_0 \in \mbb R^n$ and $r \geq 1$. Thus $H$ is a weight on 
$\mbb R^n$ of dimension $\alpha$, and $A_\alpha(H) \lct \gamma$. This
immediately proves the corollary for $0 < \alpha \leq (n-1)/2$.

For $(n-1)/2 < \alpha \leq n/2$, we apply H\"{o}lder's inequality, to get
\begin{eqnarray*}
\lefteqn{\int_X |Ef(x)|^2 dx \; = \; 
\int_{B(0,R)} |Ef(x)|^2 H(x) dx} \\
& \leq & \Big( C_\epsilon R^\epsilon A_\alpha(H) 
         \| f \|_{L^2(S)}^{q(\alpha)} \Big)^{2/q(\alpha)}
         \Big( \int_{B(0,R)} H(x) dx \Big)^{1-(2/q(\alpha))} \\
& \leq & \Big( C_\epsilon R^\epsilon A_\alpha(H) 
         \| f \|_{L^2(S)}^{q(\alpha)} \Big)^{2/q(\alpha)}
         \Big( A_\alpha(H) R^\alpha \Big)^{1-(2/q(\alpha))} \\
&  =   & \big( C_\epsilon R^\epsilon \big)^{2/q(\alpha)} A_\alpha(H) 
         R^{2 e(\alpha)} \| f \|_{L^2(S)}^2,
\end{eqnarray*}
as claimed.
\end{proof}

Theorem \ref{highfracdim} has the following corollary that will be needed to
prove Theorem \ref{main1} in the regime $n/2 < \alpha \leq (n+1)/2$, and 
Proposition \ref{simbase}.

\begin{alphcoro}[\cite{dz:schrodinger3}]
\label{highfracdimcoro}
Suppose $n \geq 2$, $1 \leq \alpha \leq n$, $S$ is a smooth compact 
hypersurface in $\mbb R^n$ with a strictly positive second fundamental form,
and $E=E_S$ is the extension operator on $S$. Then to every $\epsilon > 0$ 
there is a constant $C_\epsilon$ such that
\begin{displaymath}
\int_{B(0,R)} |E f(x)|^2 H(x) dx \leq C_\epsilon R^\epsilon 
A_\alpha(H) R^{\alpha/n} \| f \|_{L^2(S)}^2
\end{displaymath}
for all functions $f \in L^2(S)$, weights $H$ on $\mbb R^n$ of dimension 
$\alpha$, and radii $R \geq 1$.
\end{alphcoro}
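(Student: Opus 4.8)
The plan is to deduce Corollary \ref{highfracdimcoro} from Du--Zhang's Theorem \ref{highfracdim} in two steps: first, to pass from the paraboloid $\mathcal P$ to a general positively curved hypersurface $S$; and second, to recast the statement, which in Theorem \ref{highfracdim} is phrased in terms of a union of lattice cubes and its density parameter $\gamma$, as a statement about weights $H$ and the quantity $A_\alpha(H)$, squaring the estimate along the way.

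For the first step I would record that Theorem \ref{highfracdim} remains valid, with the same exponents, when $E_{\mathcal P}$ is replaced by $E_S$ for any smooth compact $S$ with strictly positive second fundamental form. Covering $S$ by finitely many caps and applying to each cap a rotation together with an affine change of variables that turns it into a small perturbation of a piece of $\mathcal P$, one checks that such affine maps preserve, up to the implicit constants, the class of unions of lattice unit cubes with density parameter comparable to $\gamma$, while the exponents $1/n$ and $\alpha/(2n)$ are dimensional and hence unaffected; summing over the caps then gives the claim. (This is the only place where the positivity of the second fundamental form is used, as opposed to mere nonvanishing of the Gaussian curvature.) Squaring the resulting estimate, and using that $\gamma\geq 1$ for every nonempty $X$ so that $\gamma^{2/n}\leq\gamma$ because $n\geq 2$, I obtain: for every union $X$ of lattice unit cubes in $\widetilde{B}(0,R)$ with density parameter $\gamma$ and every $\epsilon>0$,
\[
\int_X |E_Sf(x)|^2\,dx \;\leq\; C_\epsilon\, R^{\epsilon}\,\gamma\,R^{\alpha/n}\,\|f\|_{L^2(S)}^2 .
\]

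For the second step I would first reduce to a convenient normalization. Since the estimate to be proved is linear in $H$ while its right-hand side is linear in $A_\alpha(H)$, it suffices to prove
\[
\int_{B(0,R)} |E_Sf(x)|^2\,H(x)\,dx \;\leq\; C_\epsilon\, R^{\epsilon}\,R^{\alpha/n}\,\|f\|_{L^2(S)}^2
\]
for every nonnegative measurable $H$ satisfying $\int_{B(x_0,r)}H\leq r^\alpha$ for all $x_0\in\mbb R^n$ and all $r\geq 1$; the general weighted statement then follows by applying this to $H/A_\alpha(H)$, which satisfies the normalization (note that such an $H$ need no longer be bounded by $1$, which is precisely what makes the rescaling legitimate). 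Since $\widehat{E_Sf}$ is supported in a fixed ball, the locally constant property lets me dominate $|E_Sf|^2$ pointwise by $C\,(|E_Sf|^2 * \Psi)$ for a fixed nonnegative Schwartz function $\Psi$; convolving $\Psi$ into the weight (which preserves the normalization up to a constant), discarding the rapidly decaying tail outside $B(0,2R)$ by a trivial bound, and passing to cube-supremum values reduces matters to the case $H=\sum_Q c_Q\chi_Q$, where $Q$ ranges over the lattice unit cubes and $\sum_{Q\subset B(x_0,r)}c_Q\leq C_n\,r^\alpha$. I would then dyadically pigeonhole: writing $H=\sum_{j}H_j$ with $H_j=\sum_{Q:\,c_Q\in(2^{-j-1},2^{-j}]}c_Q\chi_Q\leq 2^{-j}\chi_{X_j}$, each $X_j$ is a union of lattice unit cubes with density parameter at most a constant times $2^{j}$, so the displayed estimate of the first step gives $\int_{X_j}|E_Sf|^2\leq C_{\epsilon,n}\,R^\epsilon\,2^{2j/n}\,R^{\alpha/n}\,\|f\|_{L^2(S)}^2$, and summing produces the factor $\sum_{j}2^{-j(1-2/n)}$. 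For $n\geq 3$ this is a convergent geometric series. For $n=2$ I would first peel off the part of $H$ carried by the cubes with $c_Q\leq R^{-n-1}$, which is bounded trivially via $\|E_Sf\|_{L^\infty(\mbb R^n)}\leq C_S\|f\|_{L^2(S)}$ (giving a contribution $\lesssim R^{-1}\|f\|_{L^2(S)}^2$); only $O(\log R)$ values of $j$ then remain, and $\sum_j 2^{-j(1-2/n)}=O(\log R)$ is absorbed into $R^\epsilon$.

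The substantive input is Du--Zhang's Theorem \ref{highfracdim} itself; everything else is bookkeeping, and the only points that genuinely require care are the affine localization from $\mathcal P$ to $S$ and the translation between weights and unions of lattice cubes. In the latter, the key structural point is that the Du--Zhang density parameter $\gamma$ is always at least $1$, so that simply feeding in $H=\chi_X$ does not by itself yield a bound linear in $A_\alpha(H)$; this is circumvented by first rescaling to the normalization $\int_{B(x_0,r)}H\leq r^\alpha$ and only then pigeonholing the cube masses. I expect this translation, rather than any isolated hard inequality, to be the main thing one has to get right.
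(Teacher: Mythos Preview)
Your proposal is essentially correct and follows the same approach as the paper: reduce to the paraboloid, use the locally constant property to pass to constant-on-cubes weights, dyadically decompose according to the cube masses, and apply Theorem~\ref{highfracdim} at each level, the key point being that the exponent $2/n$ on $\gamma$ is at most $1$.

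There is one internal inconsistency you should fix. In the first step you weaken $\gamma^{2/n}$ to $\gamma$ (using $\gamma\ge 1$) and display the bound $\int_X|E_Sf|^2\le C_\epsilon R^\epsilon\gamma R^{\alpha/n}\|f\|_{L^2(S)}^2$; but in the second step you then write $\int_{X_j}|E_Sf|^2\le C_{\epsilon,n}R^\epsilon 2^{2j/n}R^{\alpha/n}\|f\|_{L^2(S)}^2$, which is the $\gamma^{2/n}$ bound, not the displayed one. This matters: if you literally used the displayed $\gamma$-bound with $\gamma_j\sim 2^j$, the sum over $j$ would give $\sum_j 2^{-j}\cdot 2^j=\sum_j 1$, which diverges. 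The fix is simply to drop the weakening $\gamma^{2/n}\le\gamma$ altogether and quote the squared Theorem~\ref{highfracdim} directly; your subsequent computation with $\sum_j 2^{-j(1-2/n)}$ is then correct.

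The only methodological difference from the paper is that the paper pigeonholes to a single dominant dyadic level $k_0$ (absorbing the $\log R$ loss into $R^\epsilon$) and then bounds the resulting factor $(2^{k_0})^{1-2/n}$ by $1$ using $k_0\le 0$, whereas you sum over all levels. Both work; the paper's version handles $n=2$ and $n\ge 3$ uniformly, while your summation needs the separate tail-truncation you describe for $n=2$.
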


Corollary \ref{highfracdimcoro} is not stated as such in 
\cite{dz:schrodinger3}, but is very similar to 
\cite[Theorem 2.3]{dz:schrodinger3}.
Also, the proof of \cite[Theorem 2.3]{dz:schrodinger3} is not explicitly 
given in that paper, because it is very similar to the proof of 
\cite[Theorem 2.2]{dz:schrodinger3}. Therefore, we will the present the 
proof of Corollary \ref{highfracdimcoro} here for the reader's convenience.

\begin{proof}[Proof of Corollary \ref{highfracdimcoro}
{\rm (\cite[Proof of Theorem 2.2]{dz:schrodinger3})}]
We may assume that $S$ is the paraboloid ${\mathcal P}$ that was defined at
the beginning of this section (see \cite[Remark 1.10]{dgowwz:falconer} and
\cite[Part (III) of Subsection 2.2]{dz:schrodinger3} for the justification
of this assumption).

We consider a covering $\{ \widetilde{B} \}$ of $B(0,R)$ by unit lattice 
cubes, and for each such cube we define
$v(\widetilde{B})=A_\alpha(H)^{-1} \int_{\widetilde{B}} H(x) dx$. Also, for 
$k=0,-1,-2, \ldots$, we set $V_k = \{ \widetilde{B} : 2^{k-1} < 
n^{-\alpha/2} v(\widetilde{B}) \leq 2^k \}$. Since each cube 
$\widetilde{B}$ is contained in a ball of radius $\sqrt{n}$, we have 
$v(\widetilde{B}) \leq n^{\alpha/2}$, so that
$\cup_k V_k \supset \cup \, \widetilde{B} \supset B(0,R)$.

Let $k_1$ be the sup of the set $\{ k \in \mbb Z :2^k \leq R^{-1000n} \}$.
By the pigeonhole principle, there is an integer $k_0$ satisfying
$k_1 < k_0 \leq 0$ such that 
\begin{eqnarray}
\label{pigeonholing}
\lefteqn{\int_{B(0,R)} |Ef(x)|^2 H(x) dx}  \nonumber \\
& \lct & R^\epsilon 2^{k_0} A_\alpha(H) \sum_{\widetilde{B} \in V_{k_0}} 
\sup_{\widetilde{B}} |Ef|^2 + A_\alpha(H) \| f \|_{L^2(S)}^2 O(R^{-500n}).
\end{eqnarray}

Since the measure $fd\sigma$ is compactly supported and 
$Ef=\widehat{fd\sigma}$, there is a non-negative rapidly decaying function
$\psi$ on $\mbb R^n$ such that
$\sup_{\widetilde{B}} |Ef|^2 \leq |Ef|^2 \ast \psi(c(\widetilde{B}))$,
where $c(\widetilde{B})$ is the center of $\widetilde{B}$. So
\begin{displaymath}
\sup_{\widetilde{B}} |Ef|^2 \lct \int_{B(c(\widetilde{B}),R^\epsilon)} 
|Ef(x)|^2 dx + \| f \|_{L^2(S)}^2 O(R^{-1000n}),
\end{displaymath}
and since
\begin{equation}
\label{bdoverlap}
\sum_{\widetilde{B} \in V_{k_0}} \chi_{B(c(\widetilde{B}),R^\epsilon)} 
\lct R^{n\epsilon},
\end{equation}
it follows that
\begin{equation}
\label{localstar}
\sum_{\widetilde{B} \in V_{k_0}} \sup_{\widetilde{B}} |Ef|^2
\lct R^{n\epsilon} \int_V |Ef(x)|^2 dx + \| f \|_{L^2(S)}^2 O(R^{-500n}),
\end{equation}
where $V= \cup_{\widetilde{B} \in V_{k_0}} B(c(\widetilde{B}),R^\epsilon)$.

Let $\{ \widetilde{B}^* \}$ be the set of all the unit lattice cubes that 
intersect $V$, and $X= \cup \, \widetilde{B}^*$. Also, let $B_r$ be a ball 
in $\mbb R^n$ of radius $r \geq R^\epsilon$. We want to estimate the number 
of the cubes $\widetilde{B}^*$ that intersect $B_r$. In order to do this, 
we need to estimate the number of balls $B(c(\widetilde{B}),2R^\epsilon)$ 
that intersect $B_r$.

We have
\begin{displaymath}
\int_{B(c(\widetilde{B}),R^\epsilon)} H(x) dx 
\geq \int_{\widetilde{B}} H(x) dx = v(\widetilde{B}) A_\alpha(H)
\geq n^{\alpha/2} 2^{k_0-1} A_\alpha(H),
\end{displaymath}
so (using (\ref{bdoverlap}))
\begin{eqnarray*}
\lefteqn{\Big( \#\{ \widetilde{B} \in V_{k_0}: 
         B(c(\widetilde{B}),2R^\epsilon) \cap B_r \not= \emptyset \} \Big) 
         2^{k_0} A_\alpha(H)} \\
& & \lct \; R^{n \epsilon} \int_{B_{3r}} H(x) dx 
 \; \leq \; R^{n \epsilon} A_\alpha(H) (3r)^\alpha,
\end{eqnarray*}
and so
\begin{displaymath}
\#\{ \widetilde{B} \in V_{k_0}: B(c(\widetilde{B}),2R^\epsilon) \cap B_r
\not= \emptyset \} \lct R^{n \epsilon} 2^{-k_0} r^\alpha.
\end{displaymath}
Thus
\begin{displaymath}
\#\{ \widetilde{B}^* : \widetilde{B}^* \subset B_r \} 
\lct R^{2n\epsilon} 2^{-k_0} r^\alpha.
\end{displaymath}
Therefore, we can apply Theorem \ref{highfracdim} with 
$\gamma \sim R^{2n\epsilon} 2^{-k_0}$ to get
\begin{displaymath}
\int_V |Ef(x)|^2 dx \leq \int_X |Ef(x)|^2 dx 
\lct R^{6\epsilon} (2^{-k_0})^{2/n} R^{\alpha/n} \| f \|_{L^2(S)}^2,
\end{displaymath}
which, combined with (\ref{pigeonholing}) and (\ref{localstar}), now tells 
us that
\begin{eqnarray*}
\int_{B(0,R)} |Ef(x)|^2 dx 
& \lct & R^{(n+6)\epsilon} (2^{k_0})^{1-(2/n)} A_\alpha(H) R^{\alpha/n} 
         \| f \|_{L^2(S)}^2 \\
& \lct & R^{(n+6)\epsilon} A_\alpha(H) R^{\alpha/n} \| f \|_{L^2(S)}^2.
\end{eqnarray*}
We note that for the last inequality, we need the fact $2^{k_0}$ is raised 
to a non-negative power, which is a consequence of the fact that the 
exponent of $\gamma$ in the estimate of Theorem \ref{highfracdim} is less 
than or equal to $1/2$, which is also the case in the estimate of Corollary 
\ref{coromain1}.
\end{proof}

\section{Proof of the weighted H\"{o}lder-type inequality and its 
corollary} 

In this section we prove Theorem \ref{whtineq} and Corollary 
\ref{indconclusion}.

\begin{proof}[Proof of Theorem \ref{whtineq}]
For $N \in \mbb N$, we let $\chi_N$ be the characteristic function of the
set
\begin{displaymath}
B(0,N) \cap \{ x \in \mbb R^n : F(x) \leq N \},
\end{displaymath}	
and we define the function $F_N$ by $F_N= \chi_N F$. Clearly,
\begin{displaymath}
\int F_N(x) H(x) dx \leq N \int_{B(0,N)} H(x) dx 
\leq A_\alpha(H) N^{1+\alpha}
\end{displaymath}
for all weights $H$ on $\mbb R^n$ of dimension $\alpha$. Letting 
$\beta_0=1$ and $C_0=N^{1+\alpha}$, this becomes
\begin{equation}
\label{que0}
\int F_N(x)^{\beta_0} H(x) dx \leq C_0 A_\alpha(H).
\end{equation}

Let $H$ be a weight on $\mbb R^n$ of dimension $\alpha$.

If $p > 1$ and $B_R$ is a ball in $\mbb R^n$ of radius $R \geq 1$, then 
(\ref{que0}) and H\"{o}lder's inequality tell us that
\begin{eqnarray*}
\int_{B_R} F_N(x)^{\beta_0/p} H(x) dx 
& \leq & \Big( \int_{B_R} F_N(x)^{\beta_0} H(x) dx \Big)^{1/p}
         \Big( \int_{B_R} H(x) dx \Big)^{1/p'} \\
& \leq & \Big( C_0 A_\alpha(H) \Big)^{1/p} 
         \Big( A_\alpha(H) R^\alpha \Big)^{1/p'} \\
&   =  & C_0^{1/p} A_\alpha(H) R^{\alpha/p'},
\end{eqnarray*}
where $p'$ is the exponent conjugate to $p$.

We now choose $p$ such that $\alpha/p'=\beta$, i.e.\ 
$p=\alpha/(\alpha-\beta)$, to conclude that the function 
${\mathcal H}(x) := N^{-1/p} F_N(x)^{\beta_0/p} H(x)$ is a weight on 
$\mbb R^n$ of dimension $\beta$. Moreover,
\begin{displaymath}
A_\beta({\mathcal H}) \leq N^{-1/p} C_0^{1/p} A_\alpha(H).
\end{displaymath}
Therefore,
\begin{eqnarray*}
\int F_N(x)^\beta N^{-1/p} F_N(x)^{\beta_0/p} H(x) dx 
&   =  & \int F_N(x)^\beta {\mathcal H}(x)dx \\
& \leq & (M_\beta F_N)^\beta A_\beta({\mathcal H}) \\
& \leq & (M_\beta F)^\beta N^{-1/p} C_0^{1/p} A_\alpha(H),
\end{eqnarray*}
where we have used the fact that $F_N \leq F$ to conclude that
$M_\beta F_N \leq M_\beta F$. Letting $M=(M_\beta F)^\beta$,
$\beta_1=\beta+(\beta_0/p)$, and $C_1 = M C_0^{1/p}$, this becomes
\begin{equation}
\label{que1}
\int F_N(x)^{\beta_1} H(x) dx \leq C_1 A_\alpha(H).
\end{equation}

Iterating the above procedure starting from (\ref{que1}) instead of 
(\ref{que0}), we arrive at
\begin{displaymath}
\int F_N(x)^{\beta_2} H(x) dx \leq C_2 A_\alpha(H)	
\end{displaymath}
with $\beta_2=\beta+(\beta_1/p)$ and $C_2= M C_1^{1/p}$. Proceeding in this 
fashion and using mathematical induction, we obtain two sequences 
$\{ \beta_k \}$ and $\{ C_k \}$ of non-negative numbers such that
\begin{equation}
\label{quek}
\int F_N(x)^{\beta_k} H(x) dx \leq C_k A_\alpha(H),
\end{equation}
$\beta_k=\beta+(\beta_{k-1}/p)$, and 
$C_k = M C_{k-1}^{1/p}$.

Now
\begin{eqnarray*}
\lefteqn{\beta_k = \beta + \frac{\beta_{k-1}}{p} 
             = \beta + \Big( \beta + \frac{\beta_{k-2}}{p} \Big) \frac{1}{p}
             = \beta + \frac{\beta}{p} + \frac{\beta_{k-2}}{p^2}
             = \beta + \frac{\beta}{p} 
               + \Big( \beta + \frac{\beta_{k-3}}{p} \Big) \frac{1}{p^2}} \\
& & \mbox{} \hspace{-0.3in} 
    = \beta + \frac{\beta}{p} + \frac{\beta}{p^2} + \frac{\beta_{k-3}}{p^3}
    = \beta + \frac{\beta}{p} +  \frac{\beta}{p^2} + \cdots + 
      \frac{\beta}{p^{k-1}} + \frac{\beta_{k-k}}{p^k}
    = \beta \frac{1-(1/p)^k}{1-(1/p)} + \frac{\beta_0}{p^k}
\end{eqnarray*}
and
\begin{eqnarray*}
\lefteqn{C_k = M C_{k-1}^{1/p} = M \Big( M C_{k-2}^{1/p} \Big)^{1/p} 
= M^{1+(1/p)} C_{k-2}^{(1/p)^2}} \\
& & \mbox{} \hspace{-0.3in}
    = M^{1+(1/p)} \Big( M C_{k-3}^{1/p} \Big)^{(1/p)^2}
    = M^{1+(1/p)+(1/p)^2} C_{k-3}^{(1/p)^3} \\
& & \mbox{} \hspace{-0.3in}
    = M^{1+(1/p)+(1/p)^2+ \cdots + (1/p)^{k-1}} C_{k-k}^{(1/p)^k} 
    = M^{(1-(1/p)^k)/(1-(1/p))} C_0^{(1/p)^k},
\end{eqnarray*}
so (recalling that $p=\alpha/(\alpha-\beta)$)
\begin{displaymath}
\lim_{k \to \infty} \beta_k = \frac{\beta}{1-(1/p)} = \alpha
\hspace{0.5in} \mbox{and} \hspace{0.5in}
\lim_{k \to \infty} C_k = M^{\alpha/\beta}=(M_\beta F)^\alpha.
\end{displaymath}
Therefore, letting $k \to \infty$ in (\ref{quek}) and using Fatou's lemma, 
we arrive at
\begin{displaymath}
\int F_N(x)^\alpha H(x) dx \leq (M_\beta F)^\alpha A_\alpha(H).
\end{displaymath}

Since $F_N \to F$ pointwise on $\mbb R^n$ as $N \to \infty$, a second 
application of Fatou's lemma gives us
\begin{displaymath}
\int F(x)^\alpha H(x) dx \leq (M_\beta F)^\alpha A_\alpha(H).
\end{displaymath}
Since the last inequality holds for all weights $H$ on $\mbb R^n$ of
dimension $\alpha$, it follows that $M_\alpha F \leq M_\beta F$.
\end{proof}

\begin{proof}[Proof of Corollary \ref{indconclusion}]
We will only prove the inequality concerning $Q_{\rm loc}(\alpha,p)$. The
proof for $Q(\alpha,p)$ is similar and a little easier.

We may assume $Q_{\rm loc}(\beta,p) < \infty$ (otherwise, there is nothing 
to prove). Let $q > Q_{\rm loc}(\beta,p)$. Then by the definition of
$Q_{\rm loc}(\beta,p)$, to every $\epsilon > 0$ there is a constant 
$C_\epsilon$ such that
\begin{displaymath}
\int_{B(0,R)} |Ef(x)|^q {\mathcal H}(x) dx \leq C_\epsilon R^\epsilon
A_\beta({\mathcal H}) \| f \|_{L^p(S)}^{q}
\end{displaymath}
for all functions $f \in L^p(S)$ and weights ${\mathcal H}$ on $\mbb R^n$ of
dimension $\beta$. Letting $F= \chi_{B(0,R)}|Ef|^{q/\beta}$, this implies
\begin{displaymath}
M_\beta(F^\beta) 
\leq \Big( C_\epsilon R^\epsilon \| f \|_{L^p(S)}^{q} \Big)^{1/\beta}.
\end{displaymath}
Applying Theorem \ref{whtineq}, we get
\begin{displaymath}
M_\alpha(F^\beta) 
\leq \Big( C_\epsilon R^\epsilon \| f \|_{L^p(S)}^{q} \Big)^{1/\beta}.
\end{displaymath}
Therefore,
\begin{displaymath}
\Big( \frac{1}{A_\alpha(H)} \int_{B(0,R)} |Ef(x)|^{(\alpha/\beta)q} H(x) dx 
\Big)^{1/\alpha}
\leq \Big( C_\epsilon R^\epsilon \| f \|_{L^p(S)}^{q} \Big)^{1/\beta}
\end{displaymath}
for all functions $f \in L^p(S)$ and weights $H$ on $\mbb R^n$ of dimension
$\alpha$. 

Recalling the definition of $Q_{\rm loc}(\alpha,p)$, we now have 
$(\alpha/\beta)q \geq Q_{\rm loc}(\alpha,p)$. Since this inequality is true
for all $q > Q_{\rm loc}(\beta,p)$, it follows that
\begin{displaymath}
(\alpha/\beta) Q_{\rm loc}(\beta,p) \geq Q_{\rm loc}(\alpha,p),
\end{displaymath}
as desired.
\end{proof}

\section{Estimates in the regimes $0 < \alpha < (n-1)/2$ and 
         $(n+1)/2 < \alpha < n$}

We start by proving two $L^2$-based weighted restriction estimates. The 
first estimate, which is part (i) of Proposition \ref{base}, proves Theorem 
\ref{main1} in the regime $0 < \alpha < (n-1)/2$, and, as discussed in the 
Introduction, is the base for proving the theorem in the other two regimes 
$(n-1)/2 \leq \alpha \leq n/2$ and $n/2 < \alpha \leq (n+1)/2$. The second 
estimate, which is part (ii) of Proposition \ref{base}, will be one of the
main components of the proof of Theorem \ref{main3}. The work we do in this
section is based on ideas from \cite{jb:besicovitch}.

\begin{prop}
\label{base}
Suppose $S$ is a smooth compact hypersurface in $R^n$ with a nowhere 
vanishing Gaussian curvature, and $H$ is a weight on $\mbb R^n$ of 
dimension $0 < \alpha < (n-1)/2$. Then:
 
{\rm (i)} To every exponent $q > 2$ there is a constant $C_q$, which does 
not depend on $H$, such that
\begin{displaymath}
\| E f \|_{L^q(Hdx)} \leq C_q A_\alpha(H)^{1/q} \| f \|_{L^2(S)}
\end{displaymath}
for all $f \in L^2(S)$.

{\rm (ii)} To every exponent $q > (n+1-2\alpha)/(n-2\alpha)$ there is a
constant $\bar{C}_q$, which does not depend on $H$, such that
\begin{displaymath}
\| E f \|_{L^q(Hdx)} \leq \bar{C}_q A_\alpha(H)^{1/(q(n-2\alpha))} 
\| H \|_{L^2(\mbb R^n)}^{(n-1-2\alpha)/(q(n-2\alpha))} \| f \|_{L^2(S)}
\end{displaymath}
for all $f \in L^2(S)$.
\end{prop}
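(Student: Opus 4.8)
The plan is to follow the approach of Bourgain's paper \cite{jb:besicovitch}, exploiting the decay of $\widehat{d\sigma}$. Recall that since $S$ has nowhere vanishing Gaussian curvature, $|\widehat{d\sigma}(x)| \lct (1+|x|)^{-(n-1)/2}$. The starting point is to bound $\|Ef\|_{L^2(Hdx)}$ by a square function estimate. Cover $\mbb R^n$ by a lattice of unit cubes $\{Q_j\}$ and write $\|Ef\|_{L^q(Hdx)}^q = \sum_j \int_{Q_j} |Ef|^q H\,dx$. On each unit cube, $|Ef|$ is essentially constant (locally constant property, since $fd\sigma$ has compact frequency support), so one replaces $\sup_{Q_j}|Ef|$ by an average and uses the weight bound $\int_{Q_j} H \lct A_\alpha(H)$ to pass from $L^q$ to a weighted $\ell^q$ sum over the cubes. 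The key point is then to estimate $\|Ef\|_{L^q(Hdx)}$ via interpolation/duality against the $L^2(d\sigma)$ norm of $f$, where the crucial input is a bound on $\|Ef\|_{L^2(Hdx)}^2 = \int |Ef|^2 H = \int\!\!\int \widehat{H}(\xi-\eta) f(\xi)\overline{f(\eta)}\,d\sigma(\xi)d\sigma(\eta)$, i.e.\ on the bilinear form with kernel $\widehat{H}$ restricted to $S \times S$.

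Next I would set up the argument as a $TT^*$ computation. We have $\int |Ef|^2 H\,dx = \langle K f, f\rangle_{L^2(\sigma)}$ where $K$ is the operator on $L^2(\sigma)$ with kernel $K(\xi,\eta) = \widehat{H\,dx}(\xi-\eta)$ (an inverse Fourier transform of a bounded measure, suitably interpreted; more carefully one works with $H$ truncated to a large ball and takes limits, exactly as in the proof of Theorem \ref{whtineq} via Fatou). To control $\|K\|_{L^2(\sigma)\to L^2(\sigma)}$, the natural route is Schur's test: bound $\sup_\xi \int_S |K(\xi,\eta)|\,d\sigma(\eta)$. Writing $H$ as a sum over unit cubes and using $|\widehat{\chi_{Q}}(\zeta)| \lct \prod(1+|\zeta_i|)^{-1}$ together with the surface measure of the set of $\eta \in S$ for which $\xi-\eta$ lies in a given direction, one gets a bound in terms of $A_\alpha(H)$ provided $\alpha < (n-1)/2$; this restriction on $\alpha$ is exactly what makes the relevant sum converge (the surface $S$ being $(n-1)$-dimensional, slabs of thickness $1$ about hyperplanes meet $S$ in sets of measure $\sim 1$, and summing the decay $(1+|\text{dist}|)^{-(n-1)/2}$-type factors against the fractal bound $R^\alpha$ converges precisely when $2\alpha < n-1$). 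This yields part (i) for $q=2$ essentially, and then for $q>2$ by noting the same machinery gives an $L^q$ bound directly, or by a trivial interpolation with the $q=\infty$ bound $\|Ef\|_\infty \lct \|f\|_{L^1(\sigma)} \lct \|f\|_{L^2(\sigma)}$ after one accounts for the weight.

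Actually, the cleanest route to part (i) for the full range $q>2$ is: reduce to the weighted $\ell^q$ sum $\big(\sum_j A_\alpha(H)^{\text{local}}_j (\text{avg}_{Q_j}|Ef|)^q\big)^{1/q}$ where $\sum_j (\cdot)^{\text{local}}_j \lct A_\alpha(H)$ appropriately after dyadic pigeonholing on the size of $\int_{Q_j} H$, then dominate the $\ell^q$ sum by an $\ell^2$ sum (since $q>2$, up to harmless losses absorbed by the pigeonholing), and finally invoke the plain $L^2(dx)$ local bound $\sum_j (\text{avg}_{Q_j}|Ef|)^2 \lct \|Ef\|_{L^2(\text{weighted by the }R^\alpha\text{-fractal slab count})}^2$ — but more honestly, one bounds $\sum_{Q_j \subset B_R} (\text{avg}_{Q_j}|Ef|)^2$ by splitting into the frequencies and using the decay of $\widehat{d\sigma}$ directly as above. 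For part (ii), the plan is the same Schur-test / $TT^*$ computation, but now one does not discard the $\ell^2$-mass of $H$: instead of bounding everything by $A_\alpha(H)$, one keeps a factor of $\|H\|_{L^2(\mbb R^n)}$. Concretely, in the Schur estimate one splits $\int_S |K(\xi,\eta)|\,d\sigma(\eta)$ into the contribution of $\eta$ near $\xi$ (distance $\lct 1$), handled by Cauchy--Schwarz against $\|H\|_{L^2}$ and producing the $(n-1-2\alpha)/(q(n-2\alpha))$ power, and the far contribution, handled by the fractal bound producing the $A_\alpha(H)$ power; optimizing the split and then upgrading from $q=2$ to $q > (n+1-2\alpha)/(n-2\alpha)$ by interpolation with the $L^\infty$ endpoint gives the stated exponents. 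The arithmetic exponent $(n+1-2\alpha)/(n-2\alpha)$ is precisely what comes out of balancing these two pieces, and the condition $\alpha < (n-1)/2$ guarantees $n-1-2\alpha > 0$ so the $\|H\|_{L^2}$ exponent is positive and the whole scheme is consistent.

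The main obstacle I expect is the rigorous treatment of the bilinear/Schur estimate: $H$ is merely a bounded measurable function of infinite mass, so $\widehat{H\,dx}$ is not literally a function, and the kernel $K(\xi,\eta)$ must be understood through the regularization $H \mapsto \chi_{B(0,N)} H$ followed by a monotone/Fatou limit (as done for Theorem \ref{whtineq}); keeping careful track of the decay estimates so that the bounds are uniform in $N$ is the delicate part. The second, more combinatorial, obstacle is the slab-counting lemma: one needs that for a weight $H$ of dimension $\alpha$, $\int_{\{x : |x\cdot\omega - t| \leq 1\}} H(x)\,dx$, summed against the natural $(1+|t|)$-type decay coming from the curvature of $S$, is controlled by $A_\alpha(H)$ — this is where $\alpha < (n-1)/2$ enters and must be verified by decomposing the slab into unit cubes, using $\int_{B_R} H \lct A_\alpha(H) R^\alpha$ on each scale $R \sim 2^k$, and summing the geometric-type series; the convergence is exactly at the threshold, so the constant $C_q$ blows up as $\alpha \to (n-1)/2$ and as $q \to 2$, which is consistent with the statement allowing $C_q$ to depend on $q$ (and implicitly on $\alpha$).
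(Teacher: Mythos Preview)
Your proposal contains a genuine gap in the $TT^*$/Schur step. You set up the operator $K$ on $L^2(\sigma)$ with kernel $K(\xi,\eta)=\widehat{H}(\xi-\eta)$ and propose to bound $\sup_\xi\int_S|\widehat{H}(\xi-\eta)|\,d\sigma(\eta)$ by decomposing $H$ into unit cubes. This fails: writing $H=\sum_j H\chi_{Q_j}$ gives $|\widehat{\chi_{Q_j}}(\zeta)|=|\widehat{\chi_{Q_0}}(\zeta)|$ for all $j$ (the cubes are translates of one another), so after taking absolute values the sum over $j$ produces $\|H\|_{L^1}$, which is typically infinite and in any case is not controlled by $A_\alpha(H)$. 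There is simply no useful pointwise bound on $|\widehat{H}|$ for a general weight $H$.

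The cure, and the content of the paper's argument, is to put $\widehat{\sigma}$ rather than $\widehat{H}$ in the role of the kernel. One clean way to see this (not exactly the paper's route, but close in spirit) is to run $TT^*$ in the other direction: the operator $E^*E$ on $L^2(H\,dx)$ sends $g\mapsto (gH)\ast\widehat{\sigma}$, so its kernel with respect to the measure $H\,dy$ is $\widehat{\sigma}(x-y)$; Schur's test then asks for $\sup_x\int|\widehat{\sigma}(x-y)|H(y)\,dy$, and a dyadic decomposition in $|x-y|\sim 2^k$ together with $|\widehat{\sigma}(z)|\lct|z|^{-(n-1)/2}$ and $\int_{B(x,2^{k+1})}H\lct A_\alpha(H)2^{k\alpha}$ gives a convergent geometric series precisely when $\alpha<(n-1)/2$. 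The paper itself proceeds via a level-set argument: it bounds $\mu(\{|Ef|>\lambda\})$ by reducing (via Chebyshev and Cauchy--Schwarz) to $\|\widehat{\chi_G\,d\mu}\|_{L^2(S)}^2=\int((\chi_G\,d\mu)\ast\widehat{\sigma})\,\chi_G\,d\mu$, then decomposes $\widehat{\sigma}$ dyadically in physical space exactly as above. Your ``slab-counting'' picture is off: the relevant geometry is balls $B(x,2^{k+1})$, not hyperplane slabs.

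For part (ii) your near/far split on the frequency side does not produce the stated exponents. The paper stays in the level-set framework and records a \emph{second} bound on each dyadic piece, coming from Plancherel and the dimensionality of $\sigma$ (namely $\|\widehat{\psi_k}\ast\sigma\|_\infty\lct 2^k$), which gives $\int|(\chi_G\,d\mu)\ast(\psi_k\widehat{\sigma})|\,\chi_G\,d\mu\lct 2^k\|\chi_G H\|_{L^2}^2$. Taking the minimum of this growing bound and the decaying bound $2^{-k(n-1-2\alpha)/2}A_\alpha(H)\mu(G)$, optimizing over the crossover scale, and then integrating the resulting weak-type estimate yields exactly the threshold $q>(n+1-2\alpha)/(n-2\alpha)$ and the exponents on $A_\alpha(H)$ and $\|H\|_{L^2}$.
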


\begin{proof}
We may assume that $H \in L^1(\mbb R^n)$. (Otherwise, we multiply $H$ by the
characteristic function of the ball $B(0,R)$, obtain an estimate that is
uniform in $R$, and then send $R$ to infinity using the fact that
$A_\alpha(\chi_{B(0,R)} H) \leq A_\alpha(H)$.) We define the measure 
$\mu$ on $\mbb R^n$ by $d\mu= Hdx$. 

Let $f \in L^2(S)$. We need to estimate $\| E f \|_{L^q(\mu)}$. We write
\begin{equation}
\label{bloc1}
\| E f \|_{L^q(\mu)}^q = \int_0^{\| f \|_{L^1(S)}} q \lambda^{q-1} 
  \mu \big( \big\{ |E f| \geq \lambda \big\} \big) d\lambda.
\end{equation}
The set $\{ |E f| \geq \lambda \}$ is contained in
\begin{displaymath}
\big\{ (\mbox{Re} \, E f)_+ \geq \frac{\lambda}{4} \big\} \cup 
\big\{ (\mbox{Re} \, E f)_- \geq \frac{\lambda}{4} \big\} \cup
\big\{ (\mbox{Im} \, E f)_+ \geq \frac{\lambda}{4} \big\} \cup 
\big\{ (\mbox{Im} \, E f)_- \geq \frac{\lambda}{4} \big\},
\end{displaymath}
where $(\mbox{Re} \, E f)_+$ and $(\mbox{Re} \, E f)_-$ are, respectively, 
the positive and negative parts of $\mbox{Re} \, Ef$; and similarly for 
$\mbox{Im} \, E f$. Therefore, it is enough to estimate the $\mu$-measure 
of the set $\{ (\mbox{Re} \, E f)_+ \geq \lambda/4 \}$. We denote this set 
by $G$, and we observe that
\begin{displaymath}
G = \big\{ \mbox{Re} \, E f \geq \frac{\lambda}{4} \big\}
\end{displaymath}
for $\lambda > 0$. So
\begin{displaymath}
\frac{\lambda}{4} \mu(G) 
\leq \int_G (\mbox{Re} \, E f) d\mu
= \mbox{Re} \int_G \, E f \, d\mu
= \mbox{Re} \int \chi_G \, \widehat{fd\sigma} \, d\mu
= \mbox{Re} \int \widehat{\chi_G d\mu} \, f d\sigma,
\end{displaymath}
and so (by Cauchy-Schwarz)
\begin{displaymath}
\lambda^2 \mu(G)^2 \leq 
16 \| f \|_{L^2(S)}^2 \| \widehat{\chi_G d\mu} \|_{L^2(S)}^2.
\end{displaymath}
By the duality relation of the Fourier transform, we have
\begin{displaymath}
\| \widehat{\chi_G d\mu} \|_{L^2(S)}^2
= \int \widehat{\chi_G d\mu} \; \overline{\widehat{\chi_G d\mu}}  \, d\sigma
= \int \Big( \overline{\widehat{\chi_G d\mu}} \, d\sigma \widehat{\Big)} \,
  \chi_G d\mu
= \int \big( (\chi_G d\mu) \ast \widehat{\sigma} \big) \chi_G d\mu,
\end{displaymath}
so
\begin{equation}
\label{bloc2}
\lambda^2 \mu(G)^2 \leq 16 \| f \|_{L^2(S)}^2 
\int \big( (\chi_G d\mu) \ast \widehat{\sigma} \big) \chi_G d\mu.
\end{equation}

The next step is to invoke the decay estimate we have on $\widehat{\sigma}$:
$|\widehat{\sigma}(\xi)| \lct |\xi|^{-(n-1)/2}$ for all $|\xi| \geq 1$
(which is a consequence of the nowhere vanishing Gaussian curvature
assumption on the surface $S$), as well as the dimensionality of the 
measure $\mu$:
\begin{displaymath}
\mu(B(x_0,R)) = \int_{B(x_0,R)} H(x) dx \leq A_\alpha(H) R^\alpha	
\end{displaymath}
for all $x_0 \in \mbb R^n$ and $R \geq 1$. 

We let $\psi_0$ be a $C_0^\infty$ function on $\mbb R^n$ satisfying 
$0 \leq \psi_0 \leq 1$, $\psi_0=1$ on $B(0,1)$, and $\psi_0=0$ outside 
$B(0,2)$. Also, for $l \in \mbb N$, we define 
$\psi_l(x)=\psi_0(x/2^l)-\psi_0(x/2^{l-1})$. Then $\psi_l$ is supported in 
the ring $2^{l-1} \leq |x| \leq 2^{l+1}$, and
\begin{displaymath}
(\chi_G d\mu) \ast \widehat{\sigma} = 
\sum_{k=0}^\infty (\chi_G d\mu) \ast (\psi_k \, \widehat{\sigma}).
\end{displaymath}
Since $|\psi_0 \, \widehat{\sigma}| \lct 1$ and 
$|\psi_l \, \widehat{\sigma}| \lct 2^{-(l-1)(n-1)/2}$, we have
\begin{eqnarray}
\label{block1}
|(\chi_G d\mu) \ast (\psi_k \, \widehat{\sigma})(x)| 
& \leq & \int |\psi_k(x-y) \, \widehat{\sigma}(x-y)| \chi_G(y) d\mu(y) 
         \nonumber \\
& \lct & 2^{-k(n-1)/2} \int \chi_{B(x,2^{k+1})}(y) \chi_G(y) d\mu(y) 
         \nonumber \\
& \lct & 2^{-k(n-1)/2} \mu \big( B(x,2^{k+1}) \big) \nonumber \\
& \lct & A_\alpha(H) 2^{-k(n-1-2\alpha)/2},
\end{eqnarray}
and since $\alpha < (n-1)/2$, it follows that
\begin{displaymath}
|(\chi_G d\mu) \ast \widehat{\sigma}(x)| \lct 
\sum_{k=0}^\infty A_\alpha(H) 2^{-k(n-1-2\alpha)/2} \lct A_\alpha(H)
\end{displaymath}
for all $x \in \mbb R^n$.

Returning to (\ref{bloc2}), we now have
$\lambda^2 \mu(G)^2 \lct \| f \|_{L^2(S)}^2 A_\alpha(H) \mu(G)$. Therefore, 
by (\ref{bloc1}),
\begin{displaymath}
\| E f \|_{L^q(\mu)}^q \lct  A_\alpha(H) \| f \|_{L^2(S)}^2 
\int_0^{\| f \|_{L^1(S)}}  \lambda^{q-3} d\lambda
\lct  A_\alpha(H) \| f \|_{L^2(S)}^q
\end{displaymath}
provided $q >2$. This proves part (i).

We note that in proving part (i) we did not use the dimensionality of the 
measure $\sigma$: $\sigma(B(x_0,r)) \lct r^{n-1}$ for all 
$x_0 \in \mbb R^n$ and $r > 0$. But the dimensionality of $\sigma$ will be 
used in proving part (ii) in the following form:
\begin{equation}
\label{dimsigma}
\| \widehat{\psi_k} \ast \sigma \|_{L^\infty} \lct 2^k
\end{equation}
for $k=0,1,2, \ldots$.

The inequality (\ref{block1}) is a bound on 
$\| (\chi_G d\mu) \ast (\psi_k \, \widehat{\sigma}) \|_{L^\infty}$, which
implies that
\begin{displaymath}
\int |(\chi_G d\mu) \ast (\psi_k \, \widehat{\sigma})| \chi_G d\mu
\lct 2^{-k(n-1-2\alpha)/2} A_\alpha(H) \mu(G).
\end{displaymath}
We now derive a second bound on
$\int |(\chi_G d\mu) \ast (\psi_k \, \widehat{\sigma})| \chi_G d\mu$. By 
Plancherel and (\ref{dimsigma}),
\begin{displaymath}
\| (\chi_G d\mu) \ast (\psi_k \, \widehat{\sigma}) \|_{L^2}
\lct 2^k \| \widehat{\chi_G H} \|_{L^2} = 2^k \| \chi_G H \|_{L^2},
\end{displaymath}
so (by Cauchy-Schwarz)
\begin{equation}
\label{closely}
\int |(\chi_G d\mu) \ast (\psi_k \, \widehat{\sigma})| \chi_G d\mu
\lct 2^k \| \chi_G H \|_{L^2}^2.
\end{equation}
Thus
\begin{displaymath}
\int |(\chi_G d\mu) \ast (\psi_k \, \widehat{\sigma})| \chi_G d\mu
\lct \min \big[ 2^k \| H \|_{L^2}^2, 
                  2^{-k(n-1-2\alpha)/2} A_\alpha(H) \mu(G) \big]
\end{displaymath}
for $k=0,1,2, \ldots$, where we have used the fact that 
$\| \chi_G H \|_{L^2} \leq \| H \|_{L^2}$.

Returning to (\ref{bloc2}), we now have
\begin{displaymath}
\lambda^2 \mu(G)^2 \lct \| f \|_{L^2(S)}^2 
\Big( \sum_{k=0}^{k_0} 2^k \| H \|_{L^2}^2 
      + \sum_{k=k_0}^\infty 2^{-k(n-1-2\alpha)/2} A_\alpha(H) \mu(G)
\Big),
\end{displaymath}
where $k_0$ is a positive integer that satisfies
\begin{displaymath}
2^{k_0} \sim \Big( \frac{A_\alpha(H) \mu(G)}{\| H \|_{L^2}^2} 
             \Big)^{2/(n+1-2\alpha)}.
\end{displaymath}
Since $(n-1-2\alpha)/2 > 0$, the geometric series converges giving
\begin{displaymath}
\lambda^2 \mu(G)^2 \lct \| f \|_{L^2(S)}^2 
\Big( A_\alpha(H) \mu(G) \Big)^{2/(n+1-2\alpha)}
\| H \|_{L^2}^{2(n-1-2\alpha)/(n+1-2\alpha)},
\end{displaymath}
which in turn implies that
\begin{displaymath}
\mu(G) \lct A_\alpha(H)^{1/(n-2\alpha)} 
\| H \|_{L^2}^{(n-1-2\alpha)/(n-2\alpha)} 
\big( \lambda^{-1} \| f \|_{L^2(S)} \big)^{(n+1-2\alpha)/(n-2\alpha)}.
\end{displaymath}
Inserting this bound on $\mu(G)$ into (\ref{bloc1}), we obtain
\begin{eqnarray*}
\| E f \|_{L^q(\mu)}^q
& \lct & A_\alpha(H)^{1/(n-2\alpha)} 
         \| H \|_{L^2}^{(n-1-2\alpha)/(n-2\alpha)} 
         \| f \|_{L^2(S)}^{(n+1-2\alpha)/(n-2\alpha)} \\
&      & \times
         \int_0^{\| f \|_{L^1(S)}} \lambda^{q-1-(n+1-2\alpha)/(n-2\alpha)} 
         d\lambda \\
& \lct & A_\alpha(H)^{1/(n-2\alpha)} 
         \| H \|_{L^2}^{(n-1-2\alpha)/(n-2\alpha)} \| f \|_{L^2(S)}^q
\end{eqnarray*}
provided $q > (n+1-2\alpha)/(n-2\alpha)$, which proves part (ii).
\end{proof}

Readers who are familiar with Bourgain's paper \cite{jb:besicovitch} will
realize that we can follow that paper more closely by inserting a favorable 
local restriction estimate in the inequality immediately preceding
(\ref{block1}). The argument will then proceed as follows.

Suppose $1 \leq \alpha < n$. The last inequality before (\ref{block1}) says
\begin{displaymath}
\int \chi_{B(x,2^{k+1})}(y) \chi_G(y) d\mu(y) \lct \mu(B(x,2^{k+1})).
\end{displaymath}
We replace this by
\begin{eqnarray*}
\int_{B(x,2^{k+1})} \chi_G(y) d\mu(y) 
& \lct & \lambda^{-2} \int_{B(x,2^{k+1})} |Ef(y)|^2 H(y) dy \\
& \lct & \lambda^{-2} 2^{k\epsilon} A_\alpha(H) 2^{k\alpha/n} 
         \| f \|_{L^2(S)}^2,
\end{eqnarray*}
where on the first line we used the fact that 
$\chi_G \leq 4 \lambda^{-1} |Ef|$, and on the second line we used the
Du and Zhang estimate from Corollary \ref{highfracdimcoro}. Inequality
(\ref{block1}) becomes
\begin{displaymath}
|(\chi_G d\mu) \ast (\psi_k \widehat{\sigma})(x)| 
\lct \lambda^{-2} A_\alpha(H) 
     2^{-k \big( (n-1)/2-(\alpha/n)-\epsilon \big)}	\| f \|_{L^2(S)}^2,
\end{displaymath}
so that
\begin{displaymath}
\int |(\chi_G d\mu) \ast (\psi_k \, \widehat{\sigma})| \chi_G d\mu
\lct \lambda^{-2} A_\alpha(H) 
     2^{-k \big( (n-1)/2-(\alpha/n)-\epsilon \big)} \| f \|_{L^2(S)}^2 
     \mu(G).
\end{displaymath}
Combining this inequality with (\ref{closely}), we arrive at
\begin{eqnarray*}
\lefteqn{\int |(\chi_G d\mu) \ast (\psi_k \, \widehat{\sigma})| \chi_G 
         d\mu} \\
& \lct & \!\!\! \min \Big[ 2^k \| \chi_G H \|_{L^2}^2, \lambda^{-2} 
         A_\alpha(H) 2^{-k \big( (n-1)/2-(\alpha/n)-\epsilon \big)} 
         \| f \|_{L^2(S)}^2 \mu(G) \Big] \\
& \leq & \!\!\! \mu(G) \, \min \Big[ 2^k, \lambda^{-2} A_\alpha(H) 
         2^{-k \big( (n-1)/2-(\alpha/n)-\epsilon \big)} 
         \| f \|_{L^2(S)}^2 \Big]
\end{eqnarray*}
for $k=0,1,2, \ldots$, where we have used the fact that 
$\| \chi_G H \|_{L^2}^2 = \int_G H(x)^2 dx \leq \int_G H(x) dx = \mu(G)$.

Returning to (\ref{bloc2}), we now have
\begin{displaymath}
\lambda^2 \mu(G) \lct \| f \|_{L^2(S)}^2
\Big( \sum_{k=0}^{k_0} 2^k 
      + \sum_{k=k_0}^\infty \lambda^{-2} A_\alpha(H) 	
        2^{-k \big( (n-1)/2-(\alpha/n)-\epsilon \big)} 
       \| f \|_{L^2(S)}^2 \Big),
\end{displaymath}
where $k_0$ is a positive integer that satisfies
\begin{displaymath}
2^{k_0} \sim \Big( \lambda^{-2} A_\alpha(H) \| f \|_{L^2(S)}^2 
             \Big)^{1/((n+1)/2-(\alpha/n)-\epsilon)}.
\end{displaymath}
For the geometric series to converge, we must have 
$(n-1)/2-(\alpha/n)-\epsilon > 0$, i.e.
\begin{displaymath}
\alpha < \frac{n(n-1)}{2} - n \epsilon.	
\end{displaymath}
This is possible if $\alpha < n(n-1)/2$. In the plane, this condition 
becomes $\alpha < 1$. But for Corollary \ref{highfracdimcoro} to hold, we 
need $\alpha \geq 1$, so for the rest of this argument we must work in 
$\mbb R^n$ with $n \geq 3$. So, choosing $\epsilon$ sufficiently small, we
get
\begin{displaymath}
\lambda^2 \mu(G) \lct \| f \|_{L^2(S)}^2
\Big( \lambda^{-2} A_\alpha(H) \| f \|_{L^2(S)}^2 
\Big)^{1/((n+1)/2-(\alpha/n)-\epsilon)},
\end{displaymath}
and so
\begin{displaymath}
\mu(G) \lct \lambda^{-2 q_\epsilon} \| f \|_{L^2(S)}^{2q_\epsilon}
A_\alpha(H)^{1/((n+1)/2-(\alpha/n)-\epsilon)},
\end{displaymath}
where 
$q_\epsilon=((n+3)/2-(\alpha/n)-\epsilon)/((n+1)/2-(\alpha/n)-\epsilon)$.

Inserting the bound we now have on $\mu(G)$ into (\ref{bloc1}), we obtain
\begin{eqnarray*}
\| E f \|_{L^q(\mu)}^q
& \lct & \| f \|_{L^2(S)}^{2q_\epsilon} 
         A_\alpha(H)^{1/((n+1)/2-(\alpha/n)-\epsilon)}
         \int_0^{\| f \|_{L^1(S)}} \lambda^{q-1-2q_\epsilon} d\lambda \\
& \lct & A_\alpha(H)^{1/((n+1)/2-(\alpha/n)-\epsilon)} \| f \|_{L^2(S)}^q
\end{eqnarray*}
provided $q > 2q_\epsilon$. Since
\begin{displaymath}
\lim_{\epsilon \to 0} 2q_\epsilon
= 2 \frac{n^2+3n-2\alpha}{n^2+n-2\alpha},
\end{displaymath}
we obtain the following result.

\begin{prop}
\label{simbase}
Suppose that $n \geq 3$, $1 \leq \alpha < n$, and $S$ is a compact 
$C^\infty$ hypersurface in $\mbb R^n$ with a strictly positive second 
fundamental form. Then to every exponent 
$q > 2(n^2+3n-2\alpha)/(n^2+n-2\alpha)$ there is a constant $c_q$ 
satisfying $c_q < ((n-1)/2)-(\alpha/n)$ such that the following holds: if 
$0< \epsilon < c_q$, then
\begin{displaymath}
\int |Ef(x)|^q H(x) dx 
\lct A_\alpha(H)^{1/((n+1)/2-(\alpha/n)-\epsilon)} \| f \|_{L^2(S)}^q
\end{displaymath}	
for all functions $f \in L^2(S)$ and weights $H$ on $\mbb R^n$ of dimension
$\alpha$.
\end{prop}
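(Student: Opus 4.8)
The plan is to follow the argument sketched immediately before the statement, which is a variant of the proof of part (i) of Proposition \ref{base} in which the Du--Zhang local estimate of Corollary \ref{highfracdimcoro} replaces the crude volume bound $\mu(B(x,2^{k+1})) \lct A_\alpha(H)2^{k\alpha}$. As in Proposition \ref{base}, one first reduces to the case $H \in L^1(\mbb R^n)$ by truncating $H$ against $\chi_{B(0,R)}$ and sending $R \to \infty$ (using $A_\alpha(\chi_{B(0,R)}H) \leq A_\alpha(H)$), sets $d\mu = H\,dx$, and opens $\| Ef \|_{L^q(\mu)}^q$ via the layer-cake formula. Splitting $Ef$ into the positive and negative parts of its real and imaginary parts reduces matters to bounding $\mu(G)$ for $G = \{ \mathrm{Re}\, Ef \geq \lambda/4 \}$, and the duality computation from Proposition \ref{base} gives $\lambda^2 \mu(G)^2 \lct \| f \|_{L^2(S)}^2 \int \big( (\chi_G\,d\mu) \ast \widehat{\sigma} \big) \chi_G \, d\mu$.

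Next I would run the dyadic decomposition $\widehat{\sigma} = \sum_k \psi_k\widehat{\sigma}$ as in the proof of Proposition \ref{base} and obtain two competing bounds for each block $\int |(\chi_G\,d\mu) \ast (\psi_k\widehat{\sigma})| \chi_G \, d\mu$. The first is the Plancherel bound: by the dimensionality of $\sigma$ in the form $\| \widehat{\psi_k} \ast \sigma \|_{L^\infty} \lct 2^k$ one gets $\lct 2^k \| \chi_G H \|_{L^2}^2 \leq 2^k \mu(G)$, where the last step uses $H \leq 1$. The second is the new input: on each ball $B(x,2^{k+1})$ one writes $\chi_G \leq 4\lambda^{-1}|Ef|$ and invokes Corollary \ref{highfracdimcoro} to get $\int_{B(x,2^{k+1})} \chi_G\,d\mu \lct \lambda^{-2} 2^{k\epsilon} A_\alpha(H) 2^{k\alpha/n}\| f \|_{L^2(S)}^2$; combined with $|\psi_k\widehat{\sigma}| \lct 2^{-k(n-1)/2}$ and integrated against $\chi_G\,d\mu$, this yields the block bound $\lct \lambda^{-2} A_\alpha(H) 2^{-k((n-1)/2 - \alpha/n - \epsilon)}\| f \|_{L^2(S)}^2 \mu(G)$. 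Taking the minimum of the two, a common factor $\mu(G)$ comes out, so $\lambda^2 \mu(G) \lct \| f \|_{L^2(S)}^2 \sum_k \min\big[ 2^k,\; \lambda^{-2} A_\alpha(H) 2^{-k((n-1)/2 - \alpha/n - \epsilon)}\| f \|_{L^2(S)}^2 \big]$. The tail geometric series converges exactly when $(n-1)/2 - \alpha/n - \epsilon > 0$, i.e.\ $\alpha < n(n-1)/2 - n\epsilon$, which for $n \geq 3$ (where $n(n-1)/2 \geq n > \alpha$) holds once $\epsilon$ is small; balancing at $k_0$ with $2^{k_0} \sim (\lambda^{-2} A_\alpha(H) \| f \|_{L^2(S)}^2)^{1/((n+1)/2 - \alpha/n - \epsilon)}$ gives $\lambda^2\mu(G) \lct \|f\|_{L^2(S)}^2\, 2^{k_0}$, hence $\mu(G) \lct \lambda^{-2q_\epsilon} \| f \|_{L^2(S)}^{2q_\epsilon} A_\alpha(H)^{1/((n+1)/2 - \alpha/n - \epsilon)}$ with $q_\epsilon = ((n+3)/2 - \alpha/n - \epsilon)/((n+1)/2 - \alpha/n - \epsilon)$.

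Finally I would insert this bound on $\mu(G)$ back into the layer-cake integral. The $\lambda$-integral $\int_0^{\| f \|_{L^1(S)}} \lambda^{q - 1 - 2q_\epsilon}\,d\lambda$ is finite as soon as $q > 2q_\epsilon$, which produces the asserted inequality for that value of $\epsilon$. Since $2q_\epsilon \to 2(n^2 + 3n - 2\alpha)/(n^2 + n - 2\alpha)$ as $\epsilon \to 0$, any $q$ strictly above this limit admits some small $\epsilon$ with $q > 2q_\epsilon$; one then takes $c_q$ to be (a number just below) the supremum of such $\epsilon$, and notes that $c_q < (n-1)/2 - \alpha/n$ is forced, since the geometric series above diverges otherwise.

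I expect the work to be bookkeeping rather than conceptual. The two points to check with care are: that the $\epsilon$ furnished by Corollary \ref{highfracdimcoro} at scale $2^{k+1}$ may be chosen uniformly in $k$ and does not corrupt the power of $A_\alpha(H)$ (it does not — that power is $1$ in each block, inherited from the linear dependence on $A_\alpha(H)$ in Corollary \ref{highfracdimcoro}, and surfaces as the single exponent $1/((n+1)/2-\alpha/n-\epsilon)$ after the balancing); and that the hypotheses $n \geq 3$ and $1 \leq \alpha < n$ are precisely what is needed, namely $\alpha \geq 1$ so that Corollary \ref{highfracdimcoro} applies and $\alpha < n(n-1)/2$ (automatic for $n\geq 3$) so that the tail series converges for small $\epsilon$.
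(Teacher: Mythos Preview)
Your proposal is correct and follows the paper's own argument essentially line for line: the proof of Proposition \ref{simbase} is precisely the computation displayed in the text immediately preceding its statement, and you have reproduced all of its ingredients (the layer-cake/duality reduction from Proposition \ref{base}, the dyadic decomposition of $\widehat{\sigma}$, the Plancherel bound $2^k\|\chi_G H\|_{L^2}^2 \leq 2^k\mu(G)$, the replacement of the crude volume bound by the Du--Zhang estimate via $\chi_G \leq 16\lambda^{-2}|Ef|^2$, the balancing at $k_0$, and the final layer-cake integration). Your closing remarks about the roles of $n\geq 3$ and $1\leq \alpha$ also match the paper's discussion.
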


We note that $2(n^2+3n-2\alpha)/(n^2+n-2\alpha)=2(n+1)/(n-1)$ if 
$\alpha=n$, so Proposition \ref{simbase} improves on Tomas-Stein for all
$1 \leq \alpha < n$. But
\begin{displaymath}
2 \frac{n^2+3n-2\alpha}{n^2+n-2\alpha} > \frac{2n}{n-1}
\hspace{0.25in} \mbox{for} \hspace{0.25in} n \geq 3 \mbox{ and }
0 < \alpha < \frac{n(n+1)}{2},
\end{displaymath}
so Theorem \ref{main1} gives a far better result for $0 < \alpha \leq n/2$.
In fact, the range of $q$ in Theorem \ref{main1} is better than that in
Proposition \ref{simbase} for $0 < \alpha \leq \alpha_n$, where $\alpha_n$ 
is the smaller of the two solutions of the equation
\begin{displaymath}
2 \frac{n^2+3n-2\alpha}{n^2+n-2\alpha}
= \frac{2n}{n-1} + 2 - \frac{n}{\alpha}.
\end{displaymath}
Solving this equation, we see that
\begin{displaymath}
\alpha_n= \frac{n^2+1-\sqrt{n^4-4n^3+2n^2+4n+1}}{4} = \frac{n+1}{2}.
\end{displaymath}

\underline{\bf Example.} Suppose $n \geq 3$ and $Z$ is the zero set of a
polynomial $P$ on $\mbb R^n$ of degree $D \geq 1$. Also, suppose $N_\rho(Z)$
is the $\rho$-neighborhood of $Z$ and $H$ is the characteristic function of
$N_\rho(Z)$. As we saw in the first example of Section 3, $H$ is a weight on
$\mbb R^n$ of dimension $n-1$ with $A_{n-1}(H) \leq C_n D \rho$. So we can
apply Proposition \ref{simbase} with $\alpha=n-1$.

The exponent of $A_{n-1}(H)$ in Proposition \ref{simbase} is
\begin{displaymath}
\Big( \frac{n+1}{2} - \frac{n-1}{n} - \epsilon \Big)^{-1} \leq \frac{2}{n-1}
\end{displaymath}
provided $\epsilon < 1/n$. Therefore, we have the estimate
\begin{displaymath}
\int_{N_\rho(Z)} |Ef(x)|^q dx 
\lct (D \rho)^{2/(n-1)} \| f \|_{L^2(S)}^q \hspace{0.5in}
\Big( q > 2 \frac{n^2+n+2}{n^2-n+2} \Big)
\end{displaymath}
for all $\rho \geq D^{-1}$. One interesting aspect of this estimate is that
it holds beyond the $(2n+2)/(n-1)$ exponent of Tomas-Stein, another 
interesting aspect is that the exponent of $\rho$ goes to zero as 
$n \to \infty$.

\section{Proof of Theorem \ref{main1}}

Having discussed in detail the Du and Zhang fractal restriction theorem, 
proven the weighted H\"{o}lder-type inequality and its corollary, and 
established a good restriction estimate in fractal dimensions 
$0 < \alpha < (n-1)/2$, we are now ready to put all those components 
together and prove Theorem \ref{main1}.

\begin{proof}[Proof of Theorem \ref{main1}]
Let $Q(\alpha,2)$ be the quantity defined right before the statement of
Corollary \ref{indconclusion}. We need to show that
\begin{equation}
\label{main1pf1}
Q(\alpha,2) \leq \left\{ \begin{array}{ll}
2 & \mbox{ if \, $0 < \alpha < (n-1)/2$,} \\
4\alpha/(n-1) & \mbox{ if \, $(n-1)/2 \leq \alpha \leq n/2$,} \\
2 \alpha + 2 & \mbox{ if \, $n=2$ and $1 < \alpha < 2$,} \\
(2n/(n-1)) +2- (n/\alpha) & \mbox{ if \, $n \geq 3$ and
                                  $n/2 < \alpha \leq n$.}
\end{array} \right.
\end{equation}
(See the statement of Theorem \ref{main1} and the paragraph immediately 
following it.)

Part (i) of Proposition \ref{base} immediately gives the inequality on
the first line of (\ref{main1pf1}). Then, applying Theorem 
\ref{indconclusion} with $0 < \beta < (n-1)/2 \leq \alpha \leq n/2$, we get
\begin{displaymath}
\frac{Q(\alpha,2)}{\alpha} \leq \frac{Q(\beta,2)}{\beta} 
\leq \frac{2}{\beta}.
\end{displaymath}
Therefore (letting $\beta \to (n-1)/2$), $Q(\alpha,2) \leq 4 \alpha/(n-1)$. 

It remains to prove the last two lines of (\ref{main1pf1}). For this we 
need Corollary \ref{highfracdimcoro}.

Suppose $n=2$ and $1 < \alpha < 2$. Also, let $\epsilon > 0$ and 
$f \in L^2(S)$. Then Corollary \ref{highfracdimcoro} tells us that
\begin{displaymath}
\int_{B_R} |Ef(x)|^2 H(x) dx 
\leq C_\epsilon R^\epsilon A_\alpha(H) R^{\alpha/2} \| f \|_{L^2(S)}^2
\end{displaymath}
for all balls $B_R \subset \mbb R^2$ of radius $R \geq 1$. Thus the 
function 
\begin{displaymath}
{\mathcal H}(x) := \| f \|_{L^1(S)}^{-2} |Ef(x)|^2 H(x) 
\end{displaymath}
is a weight on $\mbb R^2$ of dimension $\alpha'=(\alpha/2)+\epsilon$ and 
with
\begin{displaymath} 
A_{\alpha'}({\mathcal H}) 
\lct A_\alpha(H) \| f \|_{L^1(S)}^{-2} \| f \|_{L^2(S)}^2.
\end{displaymath}
Since $1 < \alpha < 2$, we have $1/2 < \alpha' < 1$ provided $\epsilon$ is 
sufficiently small. So $Q(\alpha',2) \leq 4\alpha'$, and so
\begin{displaymath}
\int |Eg(x)|^{q'} {\mathcal H}(x) dx \lct A_{\alpha'}({\mathcal H}) 
\| g \|_{L^2(S)}^{q'}
\end{displaymath}
for all $g \in L^2(S)$ provided $q' > 4 \alpha' = 2 \alpha + 4\epsilon$.
Replacing ${\mathcal H}$ by $\| f \|_{L^1(S)}^{-2} |Ef|^2 H$, plugging $f$ 
for $g$, and choosing $\epsilon$ to be sufficiently small, the last 
estimate becomes
\begin{displaymath}
\int |Ef(x)|^q H(x) dx \lct A_\alpha(H) \| f \|_{L^2(S)}^q
\end{displaymath}
for $q > 2\alpha +2$, which proves the inequality on the line next to the 
last in (\ref{main1pf1}).

Now suppose $n \geq 3$ and $n/2 < \alpha \leq n$. Also, let $\epsilon > 0$,
$0 < p \leq 2$, and $f \in L^2(S)$. Then Corollary \ref{highfracdimcoro} 
and H\"{o}lder's inequality tell us that
\begin{eqnarray*}
\int_{B_R} |Ef(x)|^p H(x) dx \!\!\!
& \leq & \!\!\! \Big( C_\epsilon R^\epsilon A_\alpha(H) R^{\alpha/n} 
               \| f \|_{L^2(S)}^2 \Big)^{p/2} 
         \Big( \int_{B_R} H(x) dx \Big)^{1-(p/2)} \\
& \leq & \!\!\! C_\epsilon^{p/2} R^{p\epsilon/2} A_\alpha(H) 
         \| f \|_{L^2(S)}^p R^{(1-((n-1)p/(2n))\alpha}
\end{eqnarray*}
for all balls $B_R \subset \mbb R^n$ of radius $R \geq 1$, which implies 
that the function ${\mathcal H}(x) := \| f \|_{L^1(S)}^{-p} |Ef(x)|^p H(x)$ 
is a weight on $\mbb R^n$ of dimension
\begin{displaymath}
\alpha' = \Big( 1 - \frac{n-1}{2n}p \Big) \alpha + \frac{p\epsilon}{2}
\end{displaymath}
and with
\begin{displaymath} 
A_{\alpha'}({\mathcal H}) 
\lct A_\alpha(H) \| f \|_{L^1(S)}^{-p} \| f \|_{L^2(S)}^p.
\end{displaymath}
Motivated by what we did in the plane, we want to choose a $p \in (0,2]$ 
that will place $\alpha'$ between $(n-1)/2$ and $n/2$ and minimize the 
exponent $q_0$ given by
\begin{displaymath}
q_0 = \frac{4 \alpha'}{n-1} + p 
    = \frac{4 \alpha}{n-1} + \Big( 1 - \frac{2 \alpha}{n} \Big) p +  
      \frac{2p\epsilon}{n-1}.
\end{displaymath}
Since $1-(2\alpha/n) < 0$, $q$ is smallest when $p$ is largest. Also, since
$\epsilon$ can be chosen arbitrarily small,
\begin{displaymath}
\frac{n-1}{2} \leq \alpha' \leq \frac{n}{2} \;\;\; \Longleftarrow \;\;\;
\frac{n-1}{2} \leq \Big( 1 - \frac{n-1}{2n}p \Big) \alpha < \frac{n}{2}.
\end{displaymath}
Therefore,
\begin{displaymath}
p= \frac{2n}{n-1} - \frac{n}{\alpha}.
\end{displaymath}
We note that $p \leq 2$ if $\alpha \leq n(n-1)/2$, which is satisfied 
because $\alpha \leq n$ and $n \geq 3$.

Since $(n-1)/2 \leq \alpha' \leq n/2$, we now have 
$Q(\alpha',2) \leq 4\alpha'/(n-1)$, and so
\begin{displaymath}
\int |Eg(x)|^{q'} {\mathcal H}(x) dx \lct A_{\alpha'}({\mathcal H}) 
\| g \|_{L^2(S)}^{q'}
\end{displaymath}
for all $g \in L^2(S)$ provided $q' > (4\alpha'/(n-1))=q_0-p$. Replacing the
weight ${\mathcal H}$ by $\| f \|_{L^1(S)}^{-p} |Ef|^p H$, plugging $f$ for 
$g$, and choosing $\epsilon$ to be sufficiently small, the last estimate 
becomes
\begin{displaymath}
\int |Ef(x)|^q H(x) dx \lct A_\alpha(H) \| f \|_{L^2(S)}^q
\end{displaymath}
for $q > (2n/(n-1))+2-(n/\alpha)$, proving the inequality on the last line 
of (\ref{main1pf1}).
\end{proof}

\section{Preliminaries for the proofs of Theorems \ref{main2} and 
         \ref{main3}}

Let $M(\mbb R^n)$ be the space of all complex Borel measures on $\mbb R^n$.
Suppose $\mu \in M(\mbb R^n)$ is positive and compactly supported, and 
$0 < \alpha < n$. The $\alpha$-dimensional energy of $\mu$ is defined as
\begin{displaymath}
I_\alpha(\mu) = \int \!\!\! \int \frac{1}{|x-y|^\alpha} d\mu(x) d\mu(y).
\end{displaymath}
The integral $I_\alpha(\mu)$ has the following Fourier representation
\begin{equation}
\label{energy}
I_\alpha(\mu)
= c_\alpha \int |\widehat{\mu}(\xi)|^2 \frac{d\xi}{|\xi|^{n-\alpha}}
= c_\alpha \int_0^\infty \| \widehat{\mu}(R \cdot) \|_{L^2(\mbb S^{n-1})}^2
R^{\alpha-1} dR,
\end{equation}
where $c_\alpha$ is a constant that only depends on $\alpha$ and $n$, and
$\mbb S^{n-1}$ is the unit sphere in $\mbb R^n$.

For positive $\mu \in M(\mbb R^n)$ and $0 < \alpha < n$, we also define
\begin{displaymath}
{\mathcal C}_\alpha(\mu)
= \sup_{x \in \mbb R^n, r > 0} \frac{\mu(B(x,r))}{r^\alpha}.
\end{displaymath}

Let $1 \leq p \leq \infty$ and $p'$ be the exponent conjugate to $p$. We 
want to establish a connection between $L^p(S) \to L^q(\chi_{B(0,R)}Hdx)$ 
restriction estimates and the decay properties of 
$\| \widehat{\mu}(R \cdot) \|_{L^{p'}(S)}$ as $R \to \infty$ for the 
positive measures $\mu \in M(\mbb R^n)$ that are supported in the unit ball 
in $\mbb R^n$ and satisfy $I_\alpha(\mu) < \infty$ or 
${\mathcal C}_\alpha(\mu) < \infty$. 

\begin{prop}
\label{restodecay}
Suppose $1 \leq p \leq \infty$, $q \geq 1$, $0 < \alpha < n$, and we have 
the weighted local restriction estimate
\begin{displaymath}
\int_{B(0,R)} |Ef(x)|^q H(x) dx 
\leq C_\epsilon R^\epsilon A_\alpha(H) \| f \|_{L^p(S)}^q.
\end{displaymath}
Then 
\begin{displaymath}
\| \widehat{\mu}(R \cdot) \|_{L^{p'}(S)} \leq C_\epsilon R^\epsilon 
{\mathcal C}_\alpha(\mu) R^{-\alpha/q} \hspace{0.5in} (R \geq 1)
\end{displaymath}
for all positive measures $\mu \in M(\mbb R^n)$ that are supported in 
$B(0,1)$. Moreover, if $q \geq 2$, then
\begin{displaymath}
\| \widehat{\mu}(R \cdot) \|_{L^{p'}(S)} \leq C_\epsilon R^\epsilon 
\sqrt{I_\alpha(\mu)} R^{-\alpha/q} \hspace{0.5in} (R \geq 1)
\end{displaymath}
for all positive measures $\mu \in M(\mbb R^n)$ that are supported in 
$B(0,1)$.
\end{prop}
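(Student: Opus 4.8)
The plan is to prove Proposition~\ref{restodecay} by duality, followed by a rescaling and a smoothing of $\mu$. First I would record the dual form of the hypothesis: taking adjoints in the assumed bound $L^p(S)\to L^q(\chi_{B(0,R)}H\,dx)$ shows it is equivalent to
\[
\| \widehat{gH}\,|_S \|_{L^{p'}(S)} \le C_\epsilon R^\epsilon A_\alpha(H)^{1/q} \| g \|_{L^{q'}(H\,dx)}
\]
for every weight $H$ on $\mathbb{R}^n$ of dimension $\alpha$ and every $g$ supported in $B(0,R)$, where $p'$, $q'$ are the conjugate exponents. Next, for $\mu$ supported in $B(0,1)$ let $\nu$ be the pushforward of $\mu$ under $y\mapsto Ry$, so that $\nu$ is supported in $B(0,R)$ and $\widehat\nu=\widehat\mu(R\,\cdot)$; it then suffices to bound $\|\widehat\nu\,|_S\|_{L^{p'}(S)}$. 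Finally, replace $\nu$ by $w=\nu*\phi_\delta$, with $\phi_\delta$ a nonnegative bump of width $\delta=R^{-N}$ for a large $N$: since $\widehat w=\widehat\nu\,\widehat{\phi_\delta}$ and $\widehat{\phi_\delta}=1+O(\delta)$ uniformly on the bounded set $S$, we get $\widehat w\,|_S=\widehat\nu\,|_S+O\big(\delta\,\mu(\mathbb{R}^n)\big)$, a harmless error, while $w\,dx$ is an absolutely continuous measure to which the displayed dual estimate applies (one runs it on $B(0,2R)$ since $w$ is supported in a slightly enlarged ball).

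\textbf{The $\mathcal{C}_\alpha$ bound.} Apply the dual estimate with $g\equiv1$ and $H=w$, whose right-hand side is $C_\epsilon R^\epsilon A_\alpha(w)^{1/q}\big(\int w\big)^{1/q'}$. Here $\int w=\nu(\mathbb{R}^n)=\mu(\mathbb{R}^n)\le\mathcal{C}_\alpha(\mu)$ (take the radius-$1$ ball containing $\operatorname{supp}\mu$), and for $r\ge1$
\[
\int_{B(x_0,r)} w \le \nu\big(B(x_0,2r)\big)=\mu\big(B(x_0/R,\,2r/R)\big)\le 2^\alpha\,\mathcal{C}_\alpha(\mu)\,R^{-\alpha}r^\alpha ,
\]
so $A_\alpha(w)\lesssim\mathcal{C}_\alpha(\mu)R^{-\alpha}$. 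Substituting gives $\|\widehat\nu\,|_S\|_{L^{p'}(S)}\lesssim_\epsilon R^\epsilon\big(\mathcal{C}_\alpha(\mu)R^{-\alpha}\big)^{1/q}\mathcal{C}_\alpha(\mu)^{1/q'}=R^\epsilon R^{-\alpha/q}\mathcal{C}_\alpha(\mu)$, which with the smoothing error is the first assertion. I expect this to be routine.

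\textbf{The $I_\alpha$ bound.} The same scheme works, but the choice $g\equiv1$, $H=w$ is no longer efficient: the only bound on $A_\alpha(w)$ one gets from the energy is $A_\alpha(w)\lesssim R^{-\alpha/2}\sqrt{I_\alpha(\mu)}$ (via $\mu(B(z,\rho))\le(2\rho)^{\alpha/2}\sqrt{I_\alpha(\mu)}$), and combined with $\int w=\mu(\mathbb{R}^n)\le 2^{\alpha/2}\sqrt{I_\alpha(\mu)}$ this yields only the exponent $R^{-\alpha/(2q)}$. To get the sharp $R^{-\alpha/q}$ one must first split $\mu$ by its local $\alpha$-density: write $\mu=\sum_j\mu_j$, where $\mu_j$ is the restriction of $\mu$ to $\{x:\ 2^j\le\sup_{r>0}\mu(B(x,r))/r^\alpha<2^{j+1}\}$, so that $\mathcal{C}_\alpha(\mu_j)\lesssim 2^j$. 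The energy controls this decomposition through
\[
\sum_j 2^j\,\mu_j(\mathbb{R}^n)\ \le\ \int\Big(\sup_{r>0}\frac{\mu(B(x,r))}{r^\alpha}\Big)d\mu(x)\ \le\ I_\alpha(\mu),
\]
the last step because $\int|x-y|^{-\alpha}d\mu(y)\ge\sup_{r>0}r^{-\alpha}\mu(B(x,r))$. Applying the $\mathcal{C}_\alpha$ bound of the first part to each $\mu_j$, together with the trivial bound $\|\widehat{\mu_j}(R\,\cdot)\|_{L^{p'}(S)}\lesssim\mu_j(\mathbb{R}^n)$, one sums the pieces; the hypothesis $q\ge2$ is exactly what makes this summation close with exponent $R^{-\alpha/q}$, and it may be organized either through the Fourier representation (\ref{energy}) of $I_\alpha(\mu)$ or, equivalently, through the weighted H\"older inequality of Theorem~\ref{whtineq}.

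\textbf{Main obstacle.} The first part, and the duality/rescaling/smoothing setup, are mechanical. The crux is the last step of the $I_\alpha$ bound: extracting the full power $R^{-\alpha/q}$ from the density decomposition rather than the $R^{-\alpha/(2q)}$ that the one-piece argument gives. I expect balancing the $\mathcal{C}_\alpha$-per-piece estimate against the trivial estimate and the energy constraint $\sum_j 2^j\mu_j(\mathbb{R}^n)\le I_\alpha(\mu)$, while keeping track of the range of $j$ and using $q\ge2$, to be where essentially all the work lies; the remaining points (the mollification error, the passage to a slightly larger ball, and the bookkeeping in the summation) are standard.
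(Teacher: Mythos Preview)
Your setup and the $\mathcal{C}_\alpha$ bound are fine and match the paper's argument; what you do by hand (dualize, rescale, mollify) is exactly the content of Lemma~\ref{bdmubyh}, which the paper quotes as a black box. One small point you gloss over: weights in this paper are $[0,1]$-valued, and $w=\nu*\phi_\delta$ need not be; but the hypothesis extends to arbitrary nonnegative $H$ by the truncation $H_N=N^{-1}\chi_{\{H\le N\}}H$ and monotone convergence, so this is harmless.

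The gap is in the $I_\alpha$ bound. Your proposed balance---the per-piece estimate $\|\widehat{\mu_j}(R\cdot)\|_{L^{p'}}\lesssim R^{\epsilon}R^{-\alpha/q}\,2^j$ from the first part against the trivial estimate $\lesssim\|\mu_j\|$---does \emph{not} recover $R^{-\alpha/q}$. Setting $A=R^{\epsilon}R^{-\alpha/q}$ and optimizing $\sum_j\min(A2^j,\|\mu_j\|)$ under $\sum_j2^j\|\mu_j\|\le I_\alpha(\mu)$ yields only $\sqrt{A\,I_\alpha(\mu)}$, i.e.\ decay $R^{-\alpha/(2q)}$, exactly the one-piece bound you already rejected. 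If instead you keep the sharper intermediate bound $\|\widehat{\mu_j}(R\cdot)\|_{L^{p'}}\lesssim (R^{-\alpha}2^j)^{1/q}\|\mu_j\|^{1-1/q}$ and use $q\ge 2$ to write this as $\|\mu\|^{1-2/q}(R^{-\alpha}2^j\|\mu_j\|)^{1/q}$, the sum becomes $\sum_j(2^j\|\mu_j\|)^{1/q}$; with $1/q<1$ this is unbounded over infinitely many $j$ even under $\sum_j2^j\|\mu_j\|\le I_\alpha(\mu)$. Neither the Fourier representation of $I_\alpha$ nor Theorem~\ref{whtineq} rescues this.

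What is missing is a bound on the number of pieces. The paper obtains it by replacing $\mathcal{C}_\alpha$ with the truncated quantity $\mathcal{C}_{\alpha,R}(\mu)=\sup_{x}\sup_{r\ge R^{-1}}r^{-\alpha}\mu(B(x,r))$: your own first-part computation already shows $A_\alpha(w)\lesssim \mathcal{C}_{\alpha,R}(\mu)R^{-\alpha}$ (only scales $r\ge1$ for $w$, hence $r\ge R^{-1}$ for $\mu$, enter), so the first conclusion holds with $\mathcal{C}_{\alpha,R}$ in place of $\mathcal{C}_\alpha$. Decomposing by the level sets of the truncated maximal function $\sup_{r\ge R^{-1}}r^{-\alpha}\mu(B(x,r))$, which lies between $2^{-\alpha}\|\mu\|$ and $R^{\alpha}\|\mu\|$, gives $O(\log R)$ pieces each satisfying $\|\mu_j\|\,\mathcal{C}_{\alpha,R}(\mu_j)\lesssim I_\alpha(\mu)$; this is precisely Wolff's Lemma~\ref{soilemma}. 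Now the per-piece bound $\|\mu\|^{1-2/q}(R^{-\alpha}I_\alpha(\mu))^{1/q}\|f\|_{L^p}$ is uniform in $j$, the $\log R$ factor from summing is absorbed into $R^\epsilon$, and $\|\mu\|^{2}\lesssim I_\alpha(\mu)$ finishes. This is where $q\ge2$ is actually used, through $\|\mu_j\|^{1-2/q}\le\|\mu\|^{1-2/q}$.
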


Proposition \ref{restodecay} is a standard result, which we state and prove
here for clarity of exposition, as well as for highlighting the difference
between the cases $1 \leq q < 2$ and $q \geq 2$. The proof also reveals 
that the result of the proposition does not extend to the $0 < q < 1$ case, 
which is the main reason why Theorem \ref{main3} is much harder to prove 
than Theorem \ref{main2}.

For the proof of Proposition \ref{restodecay}, we need to borrow the 
following two lemmas from \cite{plms12046} and \cite{tw:csoipaper}.

\begin{alphlemma}[{\cite[Lemma 5.1]{plms12046}}]
\label{bdmubyh}
Suppose $\mu \in M(\mbb R^n)$ is positive and supported in $B(0,1)$,
$0 < \alpha \leq n$, $R \geq 1$, and
\begin{displaymath}
{\mathcal C}_{\alpha,R}(\mu)
= \sup_{x \in \mbb R^n} \sup_{r \geq R^{-1}} \frac{\mu(B(x,r))}{r^\alpha}.
\end{displaymath}
Then there is a weight $H$ (which depends on $R$) of dimension $\alpha$ such
that:
\\
{\rm (i)} $A_\alpha(H) \leq |B(0,1)|$.
\\
{\rm (ii)} To every function $f \in L^1(S)$ there is a function
$g \in L^1(S)$ such that $|g| \leq |f|$ and
\begin{displaymath}
\int |Ef(R x)|^q d\mu(x)
\leq C_q \frac{{\mathcal C}_{\alpha,R}(\mu)}{R^\alpha}
     \int_{B(0,2R)} |Eg(y)|^q H(y) dy
\end{displaymath}
for $q \geq 1$, where $C_q$ is a constant that only depends on $n$ and $q$.
\end{alphlemma}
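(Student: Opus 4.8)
The plan is to replace the (possibly singular) measure $\mu$ by an absolutely continuous weight $H\,dx$ obtained by smoothing, using the reduction from Section 2 to absorb the resulting modification of $f$. First I would fix, once and for all, a function $\phi$ as in Section 2: $\phi$ is Schwartz, $|\phi|\ge 1$ on $S$, and $\widehat\phi$ is compactly supported; after dilating $\widehat\phi$ and multiplying $\phi$ by a constant, we may also assume $\widehat\phi\ge 0$ is even and supported in $B(0,1)$. Such a $\phi$ exists because $S$ is compact. Put $g=f/\phi$; then $g\in L^1(S)$, $|g|\le|f|$, and $Ef=\widehat{(g\phi)d\sigma}=(Eg)\ast\widehat\phi$. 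Hence, by Jensen's inequality applied with the probability measure $\widehat\phi(z)\,dz/\|\widehat\phi\|_{L^1}$ (this is where $q\ge1$ enters),
\begin{displaymath}
|Ef(y)|^q\ \le\ \|\widehat\phi\|_{L^1}^{q-1}\,\big(|Eg|^q\ast\widehat\phi\big)(y)\qquad(y\in\mbb R^n).
\end{displaymath}

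Next I would rescale. Let $\nu$ be the push-forward of $\mu$ under the dilation $x\mapsto Rx$, so that $\nu$ is supported in $B(0,R)$ and $\int|Ef(Rx)|^q\,d\mu(x)=\int|Ef(y)|^q\,d\nu(y)$. The point of the cutoff $r\ge R^{-1}$ in the definition of ${\mathcal C}_{\alpha,R}(\mu)$ is precisely that, after this rescaling, $\nu$ obeys the ball condition at every scale $\ge1$: for all $x_0\in\mbb R^n$ and all $r\ge1$,
\begin{displaymath}
\nu\big(B(x_0,r)\big)=\mu\big(B(x_0/R,\,r/R)\big)\ \le\ {\mathcal C}_{\alpha,R}(\mu)\,(r/R)^\alpha={\mathcal C}_{\alpha,R}(\mu)\,R^{-\alpha}r^\alpha.
\end{displaymath}
Combining the pointwise bound above with Fubini's theorem gives $\int|Ef(Rx)|^q\,d\mu(x)\le\|\widehat\phi\|_{L^1}^{q-1}\int|Eg(y)|^q\,H_0(y)\,dy$, where $H_0:=\widehat\phi\ast\nu$. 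Since $\widehat\phi$ is supported in $B(0,1)$ and $R\ge1$, the function $H_0$ is supported in $B(0,R+1)\subseteq B(0,2R)$, so the integral may be restricted to $B(0,2R)$.

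It then remains to normalize $H_0$. Assuming $\mu\neq 0$ (the other case being trivial), set $\lambda:=\|\widehat\phi\|_{L^\infty}\,{\mathcal C}_{\alpha,R}(\mu)\,R^{-\alpha}$ and $H:=H_0/\lambda$. The pointwise estimate $H_0(y)\le\|\widehat\phi\|_{L^\infty}\,\nu(B(y,1))\le\lambda$ shows $0\le H\le1$; and, using the ball condition for $\nu$, for every $x_0$ and every $r\ge1$,
\begin{displaymath}
\int_{B(x_0,r)}H_0(y)\,dy\ \le\ \|\widehat\phi\|_{L^1}\,\nu\big(B(x_0,r+1)\big)\ \le\ 2^\alpha\,\|\widehat\phi\|_{L^1}\,{\mathcal C}_{\alpha,R}(\mu)\,R^{-\alpha}\,r^\alpha,
\end{displaymath}
so that $A_\alpha(H)\le 2^\alpha\|\widehat\phi\|_{L^1}/\|\widehat\phi\|_{L^\infty}\le 2^\alpha|B(0,1)|$, the last inequality because $\widehat\phi$ is supported in $B(0,1)$. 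The spurious factor $2^\alpha\le 2^n$ is removed either by shrinking the support of $\widehat\phi$ (replacing $B(0,1)$ by $B(0,\varepsilon_0)$ with $\varepsilon_0$ small, which still keeps $B(0,R+\varepsilon_0)\subseteq B(0,2R)$) or simply by replacing $H$ with $H/2^n$ and enlarging the final constant. Unwinding the chain of inequalities yields
\begin{displaymath}
\int|Ef(Rx)|^q\,d\mu(x)\ \le\ C_q\,\frac{{\mathcal C}_{\alpha,R}(\mu)}{R^\alpha}\int_{B(0,2R)}|Eg(y)|^q\,H(y)\,dy
\end{displaymath}
with $C_q$ comparable to $\|\widehat\phi\|_{L^1}^{q-1}\|\widehat\phi\|_{L^\infty}$, depending only on $n$ and $q$, which is the assertion.

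The only step that needs genuine care --- the rest being bookkeeping --- is checking that convolving $\nu$ with a bump concentrated at scale $\le1$ preserves the ball condition at scales $r\ge1$ with at most a bounded loss in the constant, and simultaneously produces a function bounded by $1$ after the dimensional normalization. This is exactly why the definition of ${\mathcal C}_{\alpha,R}$ carries the cutoff $r\ge R^{-1}$ instead of ranging over all radii, and why $B(0,2R)$ rather than $B(0,R)$ appears on the right-hand side; beyond this I anticipate no serious obstacle.
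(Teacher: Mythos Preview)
Your argument is correct and is precisely the standard one: mollify the rescaled measure $\nu$ at unit scale using a compactly supported bump $\widehat\phi$, and use the locally constant property $|Ef|^q\lct |Eg|^q\ast\widehat\phi$ (with $g=f/\phi$) to pass from $d\nu$ to $H_0\,dx=(\widehat\phi\ast\nu)\,dx$. The paper does not reproduce a proof of this lemma but merely cites \cite[Lemma~5.1]{plms12046}, whose argument is exactly the one you have written; the $\phi$-trick you borrow from Section~2 is also the device used there, and your handling of the normalization (the $2^\alpha$ factor absorbed either into $C_q$ or removed by shrinking $\mathrm{supp}\,\widehat\phi$) is fine.
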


\begin{alphlemma}[{\cite[Lemma 1.5]{tw:csoipaper}}]
\label{soilemma}
Let $\mu \in M(\mbb R^n)$ be a positive measure with support in $B(0,1)$,
$0 < \alpha < n$, and $R \geq 1$. Then we can decompose $\mu$ as a sum of
$O(1+\log R)$ measures $\mu_j$ so that for each $j$,
\begin{displaymath}
\| \mu_j \| \, {\mathcal C}_{\alpha,R}(\mu_j) \lct I_\alpha(\mu)
\end{displaymath}
with an implicit constant that depends only on $\alpha$ and $n$.
\end{alphlemma}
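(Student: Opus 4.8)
The plan is to decompose $\mu$ by a stopping-time stratification according to a restricted maximal density, using the elementary fact that the $\alpha$-energy dominates the Riesz $\alpha$-potential $x \mapsto \int |x-y|^{-\alpha}\,d\mu(y)$ in an averaged sense. We may assume $\mu \ne 0$ and $I_\alpha(\mu) < \infty$, the statement being vacuous otherwise.

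First I would introduce
\[
g(x) \, := \, \sup_{r \geq R^{-1}} \frac{\mu(B(x,r))}{r^\alpha} \qquad (x \in \mbb R^n),
\]
which is finite everywhere (bounded by $\| \mu \| R^\alpha$, taking $r=R^{-1}$ in $\mu(B(x,r)) \leq \|\mu\|$) and, on $\mathrm{supp}\, \mu \subset B(0,1)$, is bounded below by $\mu(B(x,2))/2^\alpha = 2^{-\alpha}\|\mu\|$ (because $\mathrm{supp}\, \mu \subset B(x,2)$ and $R \geq 1$). Hence on $\mathrm{supp}\,\mu$ the function $g$ ranges over an interval whose endpoints have ratio $\lct R^\alpha$, i.e.\ spanning only $O(1+\log R)$ dyadic values, the implicit constant depending on $\alpha$ alone. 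I would then set
\[
F_k \, := \, \{ x \in \mbb R^n : 2^{k-1} < g(x) \leq 2^k \}, \qquad \mu_k \, := \, \mu|_{F_k} \qquad (k \in \mbb Z),
\]
so that $\sum_k \mu_k = \mu$ and only $O(1+\log R)$ of the pieces $\mu_k$ are nonzero; measurability of $g$ is routine (take the supremum over rational $r \geq R^{-1}$).

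The proof then rests on two one-sided bounds. \emph{Density bound:} if $\mu_k(B(x_0,r)) > 0$ for some $r \geq R^{-1}$, choose $x \in F_k \cap B(x_0,r)$; then $B(x_0,r) \subset B(x,2r)$ and $2r \geq R^{-1}$, so $\mu_k(B(x_0,r)) \leq \mu(B(x,2r)) \leq g(x)(2r)^\alpha \leq 2^{\alpha+k} r^\alpha$, and therefore ${\mathcal C}_{\alpha,R}(\mu_k) \leq 2^{\alpha+k}$. \emph{Mass bound:} every $x \in F_k$ has $g(x) > 2^{k-1}$, so there is some $r_x \geq R^{-1}$ with $\mu(B(x,r_x)) > 2^{k-1} r_x^\alpha$, whence
\[
\int \frac{d\mu(y)}{|x-y|^\alpha} \; \geq \; \int_{B(x,r_x)} \frac{d\mu(y)}{|x-y|^\alpha} \; \geq \; r_x^{-\alpha}\, \mu(B(x,r_x)) \; > \; 2^{k-1};
\]
integrating this over $x \in F_k$ against $d\mu(x)$ and using $I_\alpha(\mu) = \int \big( \int |x-y|^{-\alpha}\,d\mu(y) \big)\,d\mu(x)$ gives $I_\alpha(\mu) \geq 2^{k-1}\|\mu_k\|$. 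Multiplying the two bounds yields $\|\mu_k\|\,{\mathcal C}_{\alpha,R}(\mu_k) \leq 2^{\alpha+k} \cdot 2^{1-k} I_\alpha(\mu) = 2^{\alpha+1}I_\alpha(\mu) \lct I_\alpha(\mu)$, with constant depending only on $\alpha$.

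Since the argument is soft there is no real obstacle; the only thing needing thought is the choice of the stratifying function, and the point is that a single family $\{F_k\}$ indexed by the dyadic size of the \emph{restricted} maximal density $g$ does two jobs at once: on $F_k$ the density $g$ is nowhere large, which forces ${\mathcal C}_{\alpha,R}(\mu_k)$ small by a trivial ball inclusion, while on $F_k$ the density $g$ is nowhere small, which forces the mass $\|\mu_k\|$ small by the potential estimate above. Fixing $g$ to a single dyadic value is exactly what makes both inequalities simultaneously available; and the number of nonempty strata is $O(1+\log R)$ because, $\mu$ being a finite measure supported in a ball, the only source of largeness in the range of $g$ is the cutoff scale $R^{-1}$.
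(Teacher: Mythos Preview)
Your proof is correct. Note, however, that the paper does not supply its own proof of this lemma: it is quoted verbatim from Wolff \cite{tw:csoipaper} (Lemma~1.5 there) and used as a black box in the proof of Proposition~\ref{restodecay}. Your argument --- stratifying $\mu$ by the dyadic level sets of the restricted maximal density $g(x)=\sup_{r\ge R^{-1}}\mu(B(x,r))/r^\alpha$, then bounding ${\mathcal C}_{\alpha,R}(\mu_k)$ from above via the definition of $F_k$ and $\|\mu_k\|$ from above via the pointwise potential estimate $\int|x-y|^{-\alpha}\,d\mu(y)>2^{k-1}$ on $F_k$ --- is precisely Wolff's original proof, so there is nothing further to compare.
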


\begin{proof}[Proof of Proposition \ref{restodecay}]
Let $f \in L^1(S)$, and $g$ be as in (ii) of Lemma \ref{bdmubyh}. Then the 
weighted restriction estimate in the assumption of Proposition 
\ref{restodecay} tells us that
\begin{displaymath}
\int |Ef(R x)|^q d\mu(x) \leq 
C_q \frac{{\mathcal C}_{\alpha,R}(\mu)}{R^\alpha} C_\epsilon (2R)^\epsilon 
A_\alpha(H) \| g \|_{L^p(S)}^q,	
\end{displaymath}
so that
\begin{equation}
\label{formujay}
\int |Ef(R x)|^q d\mu(x) \lct R^\epsilon 
\frac{{\mathcal C}_{\alpha,R}(\mu)}{R^\alpha} \| f \|_{L^p(S)}^q,	
\end{equation}
where we have used the facts that $A_\alpha(H) \leq |B(0,1)|$ and 
$|g| \leq |f|$ provided to us by Lemma \ref{bdmubyh}.	
	
Since $q \geq 1$, we can use H\"{o}lder's inequality to get
\begin{displaymath}
\Big( \int |Ef(R x)| d\mu(x) \Big)^q 
\lct R^\epsilon \| \mu \|^{q-1}
\frac{{\mathcal C}_{\alpha,R}(\mu)}{R^\alpha} \| f \|_{L^p(S)}^q.	
\end{displaymath}
Since $\mu$ is supported in the unit ball, we have 
$\| \mu \| = \mu(B(0,1)) \leq {\mathcal C}_{\alpha,R}(\mu)$, so
$\| \mu \|^{q-1} {\mathcal C}_{\alpha,R}(\mu) \leq 
 {\mathcal C}_{\alpha,R}(\mu)^q$, and so
\begin{displaymath}
\int |Ef(R x)| d\mu(x) \lct R^{\epsilon/q} 
\frac{{\mathcal C}_{\alpha,R}(\mu)}{R^{\alpha/q}} \| f \|_{L^p(S)}.
\end{displaymath}
Since $Ef= \widehat{f d\sigma}$, it follows that
\begin{displaymath}
\Big| \int \widehat{\mu}(R\xi) f(\xi) d\sigma(\xi) \Big| 
\lct R^{\epsilon/q} 
     \frac{{\mathcal C}_{\alpha,R}(\mu)}{R^{\alpha/q}} \| f \|_{L^p(S)}	
\end{displaymath}
for all $f \in L^p(S)$. By duality, this implies that
\begin{displaymath}
\| \widehat{\mu}(R \cdot) \|_{L^{p'}(S)} 
\leq C_\epsilon R^{\epsilon/q} {\mathcal C}_{\alpha,R}(\mu) R^{-\alpha/q}
\leq C_\epsilon R^\epsilon {\mathcal C}_\alpha(\mu) R^{-\alpha/q}
\end{displaymath}
for all $R \geq 1$.

Now suppose $q \geq 2$ and write $\mu= \sum_j \mu_j$ as in Lemma 
\ref{soilemma}. By H\"{o}lder's inequality, we have
\begin{eqnarray*}
\int |Ef(R x)| d\mu_j(x) 
& \leq & \| \mu_j \|^{1-(1/q)} \Big( \int |Ef(R x)|^q d\mu_j(x) \Big)^{1/q}
\\
& = & \| \mu_j \|^{1-(2/q)} 
      \Big( \| \mu_j \| \int |Ef(R x)|^q d\mu_j(x) \Big)^{1/q}.
\end{eqnarray*}
Since $q \geq 2$, we have $\| \mu_j \|^{1-(2/q)} \leq \| \mu \|^{1-(2/q)}$.
Also, by applying (\ref{formujay}) to $\mu_j$ and then using the inequality
$\| \mu_j \| \, {\mathcal C}_{\alpha,R}(\mu_j) \lct I_\alpha(\mu)$ from
Lemma \ref{soilemma}, we have
\begin{displaymath}
\| \mu_j \| \int |Ef(R x)|^q d\mu_j(x) \lct R^\epsilon \| \mu_j \|
\frac{{\mathcal C}_{\alpha,R}(\mu_j)}{R^\alpha} \| f \|_{L^p(S)}^q
\lct R^\epsilon
\frac{I_\alpha(\mu)}{R^\alpha} \| f \|_{L^p(S)}^q.	
\end{displaymath}
Therefore,
\begin{displaymath}
\int |Ef(R x)| d\mu_j(x) \lct \| \mu \|^{1-(2/q)} \Big( R^\epsilon
\frac{I_\alpha(\mu)}{R^\alpha} \| f \|_{L^p(S)}^q \Big)^{1/q}.	
\end{displaymath}
Summing over $j$, this gives
\begin{displaymath}
\int |Ef(R x)| d\mu(x) 
\lct (1+\log R) R^{\epsilon/q} \| \mu \|^{1-(2/q)} I_\alpha(\mu)^{1/q}
	     R^{-\alpha/q} \| f \|_{L^p(S)}.
\end{displaymath}
Since $\mbox{supp} \, \mu \subset B(0,1)$, we have
$\| \mu \|^2 \lct I_\alpha(\mu)$, and the above estimate becomes
\begin{displaymath}
\int |Ef(R x)| d\mu(x) 
\lct (1+\log R) R^{\epsilon/q} I_\alpha(\mu)^{1/2} R^{-\alpha/q} 
     \| f \|_{L^p(S)}.
\end{displaymath}
Therefore,
\begin{displaymath}
\Big| \int \widehat{\mu}(R \xi)| f(\xi) d\sigma(\xi) \Big| 
\lct R^\epsilon I_\alpha(\mu)^{1/2} R^{-\alpha/q} 
    \| f \|_{L^p(S)}
\end{displaymath}
for all $f \in L^p(S)$, and the desired inequality, i.e.
\begin{displaymath}
\| \widehat{\mu}(R \cdot) \|_{L^{p'}(S)} \leq C_\epsilon R^\epsilon 
\sqrt{I_\alpha(\mu)} R^{-\alpha/q}
\end{displaymath}
for all $R \geq 1$, follows from duality.	
\end{proof}

We now need to complement Proposition \ref{restodecay} with some of the 
facts that we currently know about the decay properties of
$\| \widehat{\mu}(R \cdot) \|_{L^1(S)}$ and 
$\| \widehat{\mu}(R \cdot) \|_{L^{p'}(S)}$ when $S$ is the unit sphere. The
first fact is the following basic result in geometric measure theory.

\begin{prop}
\label{basicgmt}
Let $0 < \alpha < n$. Then to every pair $(\beta,b)$ of numbers that 
satisfy $\beta > \alpha /2$ and $b > 0$ there is a number $R \geq 1$ and a 
positive measure $\mu \in M(\mbb R^n)$ with 
{\rm supp}$\, \mu \subset B(0,1)$ such that 
$R^\beta \| \widehat{\mu}(R \cdot) \|_{L^2(\mbb S^{n-1})} 
> b \; {\mathcal C}_\alpha(\mu)$.
\end{prop}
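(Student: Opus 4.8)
\emph{Strategy.} I would argue by contradiction. Fix $0 < \alpha < n$, and suppose there were a pair $(\beta,b)$ with $\beta > \alpha/2$ and $b > 0$ for which \emph{no} admissible pair $(R,\mu)$ exists; equivalently, that $R^\beta \|\widehat\mu(R\cdot)\|_{L^2(\mbb S^{n-1})} \leq b\,{\mathcal C}_\alpha(\mu)$ for every $R \geq 1$ and every positive $\mu \in M(\mbb R^n)$ with $\operatorname{supp}\mu \subset B(0,1)$. The plan is to contradict this by exhibiting a single measure that is $\alpha$-dimensional in the sense ${\mathcal C}_\alpha(\mu) < \infty$ but whose $\alpha$-energy $I_\alpha(\mu)$ is infinite --- the familiar slack between Frostman-type ball conditions and finiteness of the critical Riesz energy. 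Concretely, I would invoke the standard fact that for every $0 < \alpha < n$ there is a compact Ahlfors $\alpha$-regular set $K \subset B(0,1)$ (a self-similar Cantor-type construction), and take $\mu = {\mathcal H}^\alpha|_K$, so that $c\,r^\alpha \leq \mu(B(x,r)) \leq C\,r^\alpha$ for all $x \in K$ and $0 < r \leq \operatorname{diam}K$. A routine check over the remaining regimes (balls meeting $K$ with large radius, or disjoint from $K$) gives ${\mathcal C}_\alpha(\mu) < \infty$, and of course $\|\mu\| = \mu(\mbb R^n) < \infty$.

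\emph{The two computations.} First, $I_\alpha(\mu) = \infty$: writing $\int |x-y|^{-\alpha}\,d\mu(y) = \int_0^\infty \mu\big(B(x,t^{-1/\alpha})\big)\,dt$ and using the lower bound $\mu(B(x,r)) \geq c\,r^\alpha$ for $x \in K$ and $0 < r \leq \operatorname{diam}K$, the inner integral dominates $c\int_{t_0}^\infty t^{-1}\,dt = \infty$ (with $t_0 = (\operatorname{diam}K)^{-\alpha}$) for every $x \in K$; integrating in $x$ gives $I_\alpha(\mu) = \infty$. Second, I would feed this into the Fourier representation (\ref{energy}). Since $|\widehat\mu| \leq \|\mu\| \leq {\mathcal C}_\alpha(\mu)$, the low-frequency part $\int_0^1 \|\widehat\mu(R\cdot)\|_{L^2(\mbb S^{n-1})}^2 R^{\alpha-1}\,dR$ is at most $c_n\,{\mathcal C}_\alpha(\mu)^2/\alpha < \infty$; hence $I_\alpha(\mu) = \infty$ forces $\int_1^\infty \|\widehat\mu(R\cdot)\|_{L^2(\mbb S^{n-1})}^2 R^{\alpha-1}\,dR = \infty$. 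On the other hand, the assumed bound would give $\|\widehat\mu(R\cdot)\|_{L^2(\mbb S^{n-1})}^2 \leq b^2\,{\mathcal C}_\alpha(\mu)^2 R^{-2\beta}$ for all $R \geq 1$, so that this last integral is bounded by $b^2\,{\mathcal C}_\alpha(\mu)^2 \int_1^\infty R^{\alpha-1-2\beta}\,dR$, which \emph{converges} precisely because $\beta > \alpha/2$ makes $\alpha - 1 - 2\beta < -1$. This contradiction produces some $R \geq 1$ with $R^\beta \|\widehat\mu(R\cdot)\|_{L^2(\mbb S^{n-1})} > b\,{\mathcal C}_\alpha(\mu)$, which is exactly the assertion of the proposition.

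\emph{Main obstacle.} Everything past the choice of $\mu$ is a two-line manipulation of the energy identity, so the only real content is the geometric measure theory input: producing a measure supported in the unit ball that simultaneously satisfies ${\mathcal C}_\alpha(\mu) < \infty$ and $I_\alpha(\mu) = \infty$. This is where one genuinely uses that $\alpha/2$ --- and not $\alpha$ --- is the correct exponent governing $L^2$-spherical decay for merely $\alpha$-dimensional measures; an Ahlfors-regular $\mu$ lives exactly at this critical threshold. The one point to be slightly careful about is verifying ${\mathcal C}_\alpha(\mu) < \infty$ as a genuine supremum over \emph{all} balls and radii (not just an a.e.\ upper density bound), which however is immediate once $K$ is taken Ahlfors regular and bounded.
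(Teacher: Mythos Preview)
Your proof is correct, but it takes a somewhat different route from the paper's own argument. The paper also argues by contradiction, assuming the decay $\|\widehat\mu(R\cdot)\|_{L^2(\mbb S^{n-1})} \leq b R^{-\beta}{\mathcal C}_\alpha(\mu)$ holds for all $R\geq 1$ and all admissible $\mu$. It then picks an intermediate exponent $\gamma$ with $\alpha<\gamma<\min(2\beta,n)$ and a compact set $K\subset B(0,1)$ with Hausdorff dimension strictly between $\alpha$ and $\gamma$; Frostman's lemma gives a probability measure $\mu$ on $K$ with ${\mathcal C}_\alpha(\mu)<\infty$, and the assumed decay together with the energy identity (\ref{energy}) yields $I_\gamma(\mu)<\infty$, forcing $\dim_H K\geq\gamma$, a contradiction.

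Your argument is a bit more concrete: rather than introducing an auxiliary $\gamma$ and invoking Frostman's lemma in both directions, you produce a single Ahlfors $\alpha$-regular measure for which ${\mathcal C}_\alpha(\mu)<\infty$ while $I_\alpha(\mu)=\infty$, and then observe that the assumed decay would force $I_\alpha(\mu)<\infty$ via (\ref{energy}). This avoids the indirect dimension comparison and makes the role of the critical threshold $\beta=\alpha/2$ completely transparent in the convergence of $\int_1^\infty R^{\alpha-1-2\beta}\,dR$. The price you pay is that you must appeal to the existence of an Ahlfors $\alpha$-regular set in $B(0,1)$ for every $0<\alpha<n$, which is of course standard (self-similar Cantor constructions) but arguably a heavier black box than Frostman's lemma. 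In substance the two arguments are equivalent manifestations of the same principle: an $\alpha$-dimensional measure need not have any $L^2$-spherical Fourier decay beyond the rate $R^{-\alpha/2}$.
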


\begin{proof}
Suppose the proposition is not true. Then there is a pair $(\beta,b)$ with
$\beta > \alpha /2$ and $b > 0$ such that
$\| \widehat{\mu}(R \cdot) \|_{L^2(\mbb S^{n-1})} \leq b R^{-\beta}
{\mathcal C}_\alpha(\mu)$ for all
$R \geq 1$ and positive $\mu \in M(\mbb R^n)$ that are supported in 
$B(0,1)$.

We let $\gamma < n$ be a number that lies strictly between $\alpha$ and 
$2\beta$, and $K \! \subset B(0,1)$ be a set of Hausdorff dimension 
strictly between $\alpha$ and $\gamma$. Then $K$ carries a probability 
measure $\mu$ with ${\mathcal C}_\alpha(\mu) < \infty$. By the previous 
paragraph, we have
$\| \widehat{\mu}(R \cdot) \|_{L^2(\mbb S^{n-1})} \lct R^{-\beta}$ for all
$R \geq 1$, so (by (\ref{energy})) $I_\gamma(\mu) < \infty$, and so $K$ 
carries a probability measure $\nu$ such that
${\mathcal C}_\gamma(\nu) < \infty$. This implies that $K$ has Hausdorff
dimension $\geq \gamma$, which is a contradiction.
\end{proof}

The second fact that complements Proposition \ref{restodecay} is due to 
Wolff \cite{tw:csoipaper}:

\begin{alphprop}[{\cite[Lemma 3.1]{tw:csoipaper}}]
\label{soifact}
Let $0 < \alpha < n$. Then to every pair $(\beta,b)$ of numbers that satisfy
$\beta > \alpha/2$ and $b > 0$ there is a number $R \geq 1$ and a positive 
measure $\mu \in M(\mbb R^n)$ with {\rm supp}$\, \mu \subset B(0,1)$ such
that $R^\beta \| \widehat{\mu}(R \cdot) \|_{L^1(\mbb S^{n-1})} 
> b \, \sqrt{I_\alpha(\mu)}$.
\end{alphprop}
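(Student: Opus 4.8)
The plan is to prove the statement by exhibiting the measure $\mu$ and the radius $R$ explicitly, rather than arguing by contradiction as in the proof of Proposition \ref{basicgmt}. The example to use is the standard ``concentrated bump'': take $\mu$ to be normalized Lebesgue measure on a ball of radius $\delta \sim 1/R$. Then $\widehat{\mu}$ is real and of size $\gct 1$ throughout the ball $\{ |\xi| \leq R \}$, so its $L^1$ spherical mean over $\mbb S^{n-1}$ at radius $R$ is $\gct 1$; on the other hand, by scaling, $I_\alpha(\mu) \lct R^\alpha$, so $\sqrt{I_\alpha(\mu)} \lct R^{\alpha/2}$. The wanted inequality $R^\beta \| \widehat{\mu}(R\cdot) \|_{L^1(\mbb S^{n-1})} > b \sqrt{I_\alpha(\mu)}$ then becomes, up to constants depending only on $n$ and $\alpha$, $R^{\beta - \alpha/2} \gct b$, which holds once $R$ is large because $\beta > \alpha/2$. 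The point is that $R$ and $\mu$ are allowed to depend on $\beta$ and $b$.

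Here are the steps in order. First, let $\mu_1$ be normalized Lebesgue measure on $B(0,1) \subset \mbb R^n$. Since $\mu_1$ is symmetric about the origin, $\widehat{\mu_1}$ is real-valued, radial, and continuous with $\widehat{\mu_1}(0) = 1$, so there is $t_0 \in (0,1]$ with $\widehat{\mu_1} \geq 1/2$ on $B(0,t_0)$; also $I_\alpha(\mu_1) < \infty$ because $\alpha < n$. Second, given $(\beta,b)$ with $\beta > \alpha/2$ and $b > 0$, pick $R > 1$ large (to be fixed at the end), set $\delta = t_0/(2R) \in (0,1)$, and let $\mu$ be normalized Lebesgue measure on $B(0,\delta) \subset B(0,1)$; then $\widehat{\mu}(\xi) = \widehat{\mu_1}(\delta \xi)$, and $I_\alpha(\mu) = \delta^{-\alpha} I_\alpha(\mu_1)$ by scaling. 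Third, for every $\omega \in \mbb S^{n-1}$ we have $\delta R = t_0 / 2 \leq t_0$, hence $\widehat{\mu}(R\omega) = \widehat{\mu_1}((t_0/2)\omega) \geq 1/2$, so $\| \widehat{\mu}(R\cdot) \|_{L^1(\mbb S^{n-1})} \geq \frac12 |\mbb S^{n-1}|$, where $|\mbb S^{n-1}|$ denotes the surface measure of the unit sphere. Fourth, $I_\alpha(\mu) = (2/t_0)^\alpha I_\alpha(\mu_1) R^\alpha$, so $\sqrt{I_\alpha(\mu)} = C_{n,\alpha} R^{\alpha/2}$ with $C_{n,\alpha}$ depending only on $n$ and $\alpha$. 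Finally, $R^\beta \| \widehat{\mu}(R\cdot) \|_{L^1(\mbb S^{n-1})} \geq \frac12 |\mbb S^{n-1}| R^\beta$ while $b\sqrt{I_\alpha(\mu)} = C_{n,\alpha} b \, R^{\alpha/2}$; since $\beta - \alpha/2 > 0$, choosing $R$ large enough that $R^{\beta - \alpha/2} > 2 C_{n,\alpha} b / |\mbb S^{n-1}|$ gives the strict inequality claimed in the proposition.

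I do not expect a genuine analytic difficulty; the only thing to be careful about is the scaling bookkeeping, and in particular the exponent $\alpha/2$ appearing on the energy side. This exponent (versus the exponent $\alpha$ attached to ${\mathcal C}_\alpha(\mu)$ in the analogous Proposition \ref{basicgmt}) is exactly what pins the threshold at $\beta > \alpha/2$ and not $\beta > \alpha$, so one should not hope to push the construction below $\alpha/2$. I also note, for contrast, a less efficient route: assuming the reverse inequality $\| \widehat{\mu}(R\cdot) \|_{L^1(\mbb S^{n-1})} \leq b R^{-\beta} \sqrt{I_\alpha(\mu)}$ held for all positive $\mu$ supported in $B(0,1)$, one could apply it to the autocorrelation $\nu = \mu_0 \ast \widetilde{\mu_0}$ of a probability measure $\mu_0$ supported in $B(0,1/2)$ --- using $\widehat{\nu} = |\widehat{\mu_0}|^2 \geq 0$, $\| \widehat{\nu}(R\cdot) \|_{L^1(\mbb S^{n-1})} = \| \widehat{\mu_0}(R\cdot) \|_{L^2(\mbb S^{n-1})}^2$, and $I_\alpha(\nu) = c_\alpha \int |\widehat{\mu_0}(\xi)|^4 |\xi|^{\alpha - n} \, d\xi \leq I_\alpha(\mu_0)$ (since $|\widehat{\mu_0}| \leq 1$) --- and then invoke Proposition \ref{basicgmt}. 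This transfer loses a square root, however, and only recovers the range $\beta > \alpha$, so the direct bump construction above is the one to carry out.
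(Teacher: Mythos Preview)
Your proof is correct and follows essentially the same approach as the paper: both arguments test the inequality against a bump measure rescaled to a ball of radius $\sim 1/R$, and the threshold $\beta>\alpha/2$ emerges from the scaling $I_\alpha(\mu)\sim R^\alpha$ (equivalently, $\sim 1$ with the paper's normalization). The only cosmetic differences are that the paper normalizes $d\mu=\rho^{(\alpha/2)-n}\psi(\rho^{-1}x)\,dx$ so that $I_\alpha(\mu)\sim 1$ and then phrases the argument by contradiction (taking $R=\rho^{-1}$), whereas you normalize $\mu$ to be a probability measure and argue directly; the underlying computation is the same.
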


\begin{proof}
Let $\psi$ be a non-negative $C^\infty$ function on $\mbb R^n$ that is
supported in the unit ball and satisfies $|\widehat{\psi}| \geq 1$ on the
unit sphere. For $0 < \rho \leq 1$ and $x \in \mbb R^n$, we let 
$\Psi(x)=\rho^{(\alpha/2)-n} \psi(\rho^{-1}x)$, and we define the 
measure $\mu$ by $d\mu=\Psi dx$. Then 
$\widehat{\mu}(\xi)=\rho^{(\alpha/2)} \widehat{\psi}(\rho \, \xi)$ and (by 
(\ref{energy}))
\begin{displaymath}
I_\alpha(\mu) = c_\alpha 
\rho^\alpha \int |\widehat{\psi}(\rho \, \xi)|^2 |\xi|^{\alpha-n} d\xi
= c_\alpha \int |\widehat{\psi}(u)|^2 |u|^{\alpha-n} du \sim 1.
\end{displaymath}

Suppose the proposition is not true. Then there are numbers 
$\beta > \alpha/2$ and $b > 0$ such that 
$\| \widehat{\mu}(R \cdot) \|_{L^1(\mbb S^{n-1})} \leq b R^{-\beta}
I_\alpha(\mu)$ for all $R \geq 1$, so that
\begin{displaymath}
\int_{\mbb S^{n-1}} \big| \widehat{\psi}(\rho R \theta) \big| 
d\sigma(\theta) \lct R^{-\beta} \rho^{-\alpha/2}
\end{displaymath}
for all $0 < \rho \leq 1$ and $R \geq 1$. Taking $R=\rho^{-1}$, we get
\begin{displaymath}
\sigma(\mbb S^{n-1}) \leq 
\int_{\mbb S^{n-1}} \big| \widehat{\psi}(\theta) \big| 
d\sigma(\theta) \lct R^{-\beta} R^{\alpha/2}
\end{displaymath}
for all $R \geq 1$, which implies that $\beta \leq \alpha/2$, which is a
contradiction. 
\end{proof} 

The third fact that we need to complement Proposition \ref{restodecay} is 
the following result of Bennett and Vargas \cite{bv:randomised}.

\begin{alphthm}[{\cite[Corollary 2]{bv:randomised}}]
\label{lonemeans}
Let $1 \leq \alpha < 2$. Then to every pair $(\beta,b)$ of numbers that
satisfy $\beta > \alpha /(\alpha+2)$ and $ b > 0$ there is a positive 
measure $\mu \in M(\mbb R^2)$ with {\rm supp}$\, \mu \subset B(0,1)$ such 
that $R^\beta \| \widehat{\mu}(R \cdot) \|_{L^1(\mbb S^1)} 
> b \, \sqrt{I_\alpha(\mu)}$.
\end{alphthm}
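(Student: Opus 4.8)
The plan is to argue by contradiction, following the template used for Propositions \ref{basicgmt} and \ref{soifact} (and for Wolff's original sharpness examples). Suppose the conclusion fails, so there are numbers $\beta > \alpha/(\alpha+2)$ and $b > 0$ with
\[
\| \widehat{\mu}(R\cdot) \|_{L^1(\mbb S^1)} \leq b R^{-\beta} \sqrt{I_\alpha(\mu)}
\]
for every positive $\mu \in M(\mbb R^2)$ supported in $B(0,1)$ and every $R \geq 1$; the goal is to produce a single $\mu$ and a single $R$ violating this. The first thing to notice is that the elementary constructions are not enough: for the rescaled bump of Proposition \ref{soifact}, for the normalised indicator of a $\delta$-neighbourhood of a circle, for a thin rectangle, and for a one-scale random subset of a $\delta$-grid in $B(0,1)$, one checks that at the critical frequency $R \sim \delta^{-1}$ the quantities $\| \widehat{\mu}(R\cdot) \|_{L^1(\mbb S^1)}$ and $R^{-\alpha/2}\sqrt{I_\alpha(\mu)}$ are comparable, so these examples only rule out $\beta \leq \alpha/2$, and $\alpha/2 > \alpha/(\alpha+2)$. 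Pushing the exponent down to $\alpha/(\alpha+2)$ genuinely requires the iterated, randomised construction of \cite{bv:randomised}.

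The main step is to invoke that construction. One builds a random probability measure $\mu$ supported in $B(0,1)$ by a multiscale random-selection (fractal-percolation) procedure along a geometric sequence of scales, with the selection probabilities tuned so that $\mu$ has dimension $\alpha$; the standard energy estimate for random Cantor measures, exploiting independence of the selection across scales, then gives $\mbb E\, I_\alpha(\mu) \lct 1$. The delicate part is a matching lower bound on the \emph{expected $L^1$ circular mean}: one must show that, for a suitable $R$ (a fixed negative power of the finest scale used), $\mbb E \int_{\mbb S^1} |\widehat{\mu}(R\theta)|\, d\theta \gct R^{-\alpha/(\alpha+2)}$. The exponent $\alpha/(\alpha+2)$ is precisely what emerges from optimising the free parameters of the construction, and this is the step that is unavailable from a single-scale example (which only produces $\alpha/2$). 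Granting both bounds, a first/second-moment (Paley--Zygmund-type) argument, together with Markov's inequality applied to $I_\alpha(\mu)$, selects a single realisation $\mu_0$ with $I_\alpha(\mu_0) \lct 1$ and $\| \widehat{\mu_0}(R\cdot) \|_{L^1(\mbb S^1)} \gct R^{-\alpha/(\alpha+2)}$; taking $R$ large then makes $R^\beta \| \widehat{\mu_0}(R\cdot) \|_{L^1(\mbb S^1)} \gct R^{\beta - \alpha/(\alpha+2)} \to \infty$ while $\sqrt{I_\alpha(\mu_0)}$ stays bounded, contradicting the displayed inequality since $\beta > \alpha/(\alpha+2)$.

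Since the statement is quoted as \cite[Corollary 2]{bv:randomised}, in practice the proof consists of citing that result and checking that its conclusion --- a lower bound on $\int_{\mbb S^1} |\widehat{\mu}(R\theta)|\, d\theta$ for the random family constructed there, valid for arbitrarily large $R$ --- is the contrapositive of the formulation above; the only point requiring attention is the bookkeeping of normalisations (the cited paper may phrase things in terms of ${\mathcal C}_\alpha(\mu)$, or with a differently normalised $\mu$, in place of $\sqrt{I_\alpha(\mu)}$, but since $I_\alpha$ stays bounded on the relevant event the translation is routine). The main obstacle is the lower bound on the expected $L^1$ circular mean of $\widehat{\mu}$: this is where the randomisation is essential --- a deterministic Salem-type measure of dimension $\alpha$ has $L^1$ circular means of size $\sim R^{-\alpha/2}$, which would only yield the weaker exponent $\alpha/2$ --- and it is the step on which the sharpness of $\alpha/(\alpha+2)$ ultimately rests.
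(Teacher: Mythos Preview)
Your proposal matches the paper's treatment: the paper does not prove Theorem~\ref{lonemeans} at all but simply writes ``For the interesting proof of Theorem~\ref{lonemeans}, we refer the reader to \cite{bv:randomised}.'' Your sketch of the Bennett--Vargas randomised construction is therefore more than the paper itself supplies, and your observation that single-scale examples only reach the exponent $\alpha/2$ (so that the multiscale random construction is genuinely needed for $\alpha/(\alpha+2)$) is accurate and to the point.
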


For the interesting proof of Theorem \ref{lonemeans}, we refer the reader to
\cite{bv:randomised}.

\section{Proof of Theorems \ref{main2}}

We are given the estimate
\begin{equation}
\label{givenl2tolq}
\int_{B_R} |Ef(x)|^q H(x) dx 
\leq C_\epsilon R^\epsilon A_\alpha(H) \| f \|_{L^2(S)}^q
\end{equation}
for some $q > 0$, and we need to show that
\begin{equation}
\label{lowbdltwo}
q \geq \left\{ \begin{array}{ll}
2 & \mbox{ if $n \geq 2$ and $0 < \alpha < n$,} \\
(2\alpha+2)/(n-1) & \mbox{ if $n \geq 2$ and $1 < \alpha \leq n$,} \\
4\alpha & \mbox{ if $n = 2$ and $1/2 \leq \alpha \leq 1$.} 
\end{array} \right.
\end{equation} 
In fact, the first line of (\ref{lowbdltwo}) proves the first and third 
lower bounds on $q$ in Theorem \ref{main2}, the second line of
(\ref{lowbdltwo}) proves the second lower bound on $q$ in Theorem 
\ref{main2}, and the third line of (\ref{lowbdltwo}) is identical to the 
fourth lower bound on $q$ in Theorem \ref{main2}.

In proving (\ref{lowbdltwo}), we proceed backwards starting with the 
inequality on its last line.

Suppose $n=2$ and $1/2 \leq \alpha \leq 1$. We let $b=1-\alpha$ and define
\begin{displaymath}
X_b = \{ (x,y) \in \mbb R^2 : x > 0 \mbox{ and } 0 \leq y \leq x^{-b} \}.
\end{displaymath}
Recall from Subsection 3.4 that the characteristic function of $X_b$ is a 
weight on $\mbb R^2$ of dimension $1-b=\alpha$, and 
$A_\alpha(\chi_{X_b}) \lct 1$. So (\ref{givenl2tolq}) implies that
\begin{displaymath}
\int_{X_b \cap B_R} |E f(x)|^q dx \lct R^\epsilon \| f \|_{L^2(S)}^q
\end{displaymath}
for all $R \geq 1$. We now use the same Knapp-example argument that we used
in the Introduction.

To every $R > 1$ there is a function $f_R$ on $S$ such that 
$\| f_R \|_{L^2(S)} \lct R^{-1/4}$ and $|Ef_R| \gct R^{-1/2}$ on the 
rectangle $[0,R] \times [0,\sqrt{R}]$. The intersection of this rectangle 
with $X_b \cap B(0,R)$ contains the rectangle $[0,R] \times [0,R^{-b}]$, so 
$\| Ef_R \|_{L^q(X_b \cap B(0,R))}^q$ $\gct$ $R^{(-q/2)+\alpha}$, and so 
$R^{(-q/2)+\alpha} \leq R^\epsilon R^{-q/4}$. Therefore, $q \geq 4\alpha$.

Moving to the second line of (\ref{lowbdltwo}), we now suppose that 
$n \geq 2$ and $1 < \alpha \leq n$. We let $b=(\alpha-1)/(n-1)$ and define
\begin{displaymath}
\Omega_b = \cup_{l=1}^\infty \mbb R \times [l^{1/b},1+l^{1/b}]^{n-1}
\end{displaymath}
and we observe that $0 < b \leq 1$ and the characteristic function of 
$\Omega_b$ is a weight on $\mbb R^n$ of dimension $1+(n-1)b=\alpha$ and 
with $A_\alpha(\chi_{\Omega_b}) \lct 1$. So (\ref{givenl2tolq}) (applied 
with $H=\chi_{\Omega_b}$) implies that
\begin{displaymath}
\int_{\Omega_b \cap B_R} |E f(x)|^q dx \lct R^\epsilon \| f \|_{L^2(S)}^q
\end{displaymath}
for all $R \geq 1$.

To every $R > 1$, there is a function $f_R$ on $S$ satisfying
$\| f_R \|_{L^2(S)} \lct R^{-(n-1)/4}$ and $|Ef_R| \gct R^{-(n-1)/2}$ on 
$[0,R] \times [l^{1/b},1+l^{1/b}]^{n-1}$ whenever $l^{1/b} \leq \sqrt{R}$. 
Since there are $\sim R^{(n-1)b/2}$ such boxes, we see that 
$\| Ef_R \|_{L^q(\Omega_b \cap B(0,R))} \gct R^m$ with 
\begin{displaymath}
m= -\frac{n-1}{2} + \Big( 1 + \frac{(n-1)b}{2} \Big) \frac{1}{q}.
\end{displaymath}
We have $R^{mq} \lct R^\epsilon R^{-(n-1)q/4}$ for all $R > 1$, so
$m \leq -(n-1)/4$, and it follows that $(n-1)q \geq 2\alpha+2$.

Suppose $n \geq 2$ and $0 < \alpha < n$. We will prove the first inequality 
in (\ref{lowbdltwo}) by contradiction. Assume $q < 2$. Then the estimate 
(\ref{givenl2tolq}) holds with $q$ replaced by an exponent $q_0$ that
satisfies $q < q_0 < 2$ and $q_0 \geq 1$ (and $C_\epsilon$ replaced by
$C_\epsilon \sigma(S)^{(q_0-q)/2}$). When we combine the resulting estimate 
with Proposition \ref{restodecay}, we get the decay estimate
\begin{displaymath}
\| \widehat{\mu}(R \cdot) \|_{L^2(S)} \leq C_\epsilon' R^\epsilon 
{\mathcal C}_\alpha(\mu) R^{-\alpha/q_0} \hspace{0.5in} (R \geq 1)
\end{displaymath}
for all positive measures $\mu \in M(\mbb R^n)$ that are supported in 
$B(0,1)$. Proposition \ref{basicgmt} now implies that 
$\alpha/q_0 \leq \alpha /2$, which implies that $q_0 \geq 2$, which is a 
contradiction.

\section{Proof of Theorem \ref{main3}}

We need to show that
\begin{equation}
\label{lowbdlinfty}
Q_{\rm loc}(\alpha,\infty) \geq \left\{ \begin{array}{ll}
(n-1)/n & \mbox{ if $n \geq 2$ and $0 < \alpha < n-1$,} \\
2\alpha/(n-1) & \mbox{ if $n \geq 2$ and $0 < \alpha \leq n$,} \\
3\alpha & \mbox{ if $n = 2$ and $0 < \alpha \leq 1$,} \\
\alpha + 2 & \mbox{ if $n=2$ and $1 \leq \alpha \leq 2$.} 
\end{array} \right.
\end{equation}
In fact, the first line of (\ref{lowbdlinfty}) proves the first and third 
lower bound on $Q_{\rm loc}(\alpha,\infty)$ in Theorem \ref{main3}, the 
second line of (\ref{lowbdlinfty}) proves the second lower bound on 
$Q_{\rm loc}(\alpha,\infty)$ in Theorem \ref{main3}, the third line of 
(\ref{lowbdlinfty}) proves the fourth lower bound on 
$Q_{\rm loc}(\alpha,\infty)$ in Theorem \ref{main3}, and the last line of
(\ref{lowbdlinfty}) proves the fifth lower bound on 
$Q_{\rm loc}(\alpha,\infty)$ in Theorem \ref{main3}.

In proving (\ref{lowbdlinfty}), we proceed backwards starting with the 
inequality on its last line.

Suppose that $n = 2$ and $1 \leq \alpha \leq 2$, and that we have the 
estimate
\begin{equation}
\label{givenrestll}
\int_{B_R} |Ef(x)|^r H(x) dx 
\leq C_\epsilon R^\epsilon A_\alpha(H) \| f \|_{L^\infty(S)}^r
\end{equation}
for some $r > 0$. We need to prove that $r \geq \alpha+2$. We will do this
by showing that $r < \alpha+2$ leads to a contradiction. 

Suppose (\ref{givenrestll}) holds for some $r < \alpha+2$. We let $q$ be an 
exponent that satisfies $r < q < \alpha+2$ and $q \geq 2$. Then
(\ref{givenrestll}) holds with $r$ replaced by $q$ and $C_\epsilon$ 
replaced by $\sigma(S)^{q-r} C_\epsilon$. Since $q \geq 2$, it follows by 
Proposition \ref{restodecay} that 
\begin{displaymath}
\| \widehat{\mu}(R \cdot) \|_{L^1(S)} \leq C_\epsilon' R^\epsilon 
\sqrt{I_\alpha(\mu)} R^{-\alpha/q} \hspace{0.5in} (R \geq 1)
\end{displaymath}
for all positive measures $\mu \in M(\mbb R^n)$ that are supported in 
$B(0,1)$. By Theorem \ref{lonemeans} it then follows that
$\alpha/q \leq \alpha/(\alpha+2)$, which implies that $q \geq \alpha+2$,
which is a contradiction.

We now move to the inequality before the last in (\ref{lowbdlinfty}). So we
are still in the plane, but now $0 < \alpha \leq 1$. We have just proved
that $Q_{\rm loc}(1,\infty) \geq 3$, so, by Corollary \ref{indconclusion},
we have
\begin{displaymath}
\frac{Q_{\rm loc}(\alpha,\infty)}{\alpha} \geq
\frac{Q_{\rm loc}(1,\infty)}{1} \geq 3,
\end{displaymath}
and so $Q_{\rm loc}(\alpha,\infty) \geq 3 \alpha$.

Suppose that $n \geq 2$ and $0 < \alpha \leq n$. The fact that
$Q_{\rm loc}(n,\infty) \geq 2n/(n-1)$ follows from the fact that the
$|\widehat{\sigma}(\xi)| \sim |\xi|^{-(n-1)/2}$ for large $\xi$. Applying
Corollary \ref{indconclusion} as in the previous paragraph, we obtain the
second inequality in (\ref{lowbdlinfty}). 

The rest of the proof will be concerned with the first inequality in 
(\ref{lowbdlinfty}). 

Suppose that $n \geq 2$ and $0 < \alpha < n-1$, and that we have the 
estimate
\begin{equation}
\label{givenrest}
\int_{B_R} |Ef(x)|^r H(x) dx 
\leq C_\epsilon R^\epsilon A_\alpha(H) \| f \|_{L^\infty(S)}^r
\end{equation}
for some $r > 0$. We need to prove that $r \geq (n-1)/n$.

We apply the Cauchy-Schwarz inequality in (\ref{givenrest}) to get
\begin{eqnarray*}
\int_{B_R} |Ef(x)|^{r/2} H(x) dx 
& \leq & \Big( C_\epsilon R^\epsilon A_\alpha(H) \| f \|_{L^\infty(S)}^r 
         \Big)^{1/2} \Big( \int_{B_R} H(x) dx \Big)^{1/2} \\
& \leq & C_\epsilon^{1/2} A_\alpha(H) \| f \|_{L^\infty(S)}^{r/2} R^\beta
\end{eqnarray*}
for all balls $B_R \subset \mbb R^n$ of radius $R \geq 1$, where 
$\beta= (\alpha+\epsilon)/2$. This means 
${\mathcal H} := \| f \|_{L^1(S)}^{-r/2}|Ef|^{r/2} H$ is a weight of 
dimension $\beta$ with
\begin{displaymath}
A_\beta({\mathcal H}) \leq C_\epsilon^{1/2} A_\alpha(H) 
\| f \|_{L^1(S)}^{-r/2} \| f \|_{L^\infty(S)}^{r/2}.
\end{displaymath}
We have $0 < \alpha/2 < (n-1)/2$. So, from here on, we may assume that 
$\epsilon$ is small enough for us to have $0 < \beta < (n-1)/2$, which will
allow us to apply part (ii) of Proposition \ref{base} with any weight of
dimension $\beta$. 

We let $B_\rho$ be a ball in $\mbb R^n$ of radius $\rho \geq 1$, and we 
apply part (ii) of Proposition \ref{base} with the weight 
$\chi_{B_\rho} {\mathcal H}$ to get 
\begin{displaymath}
\int_{B_\rho} |Ef(x)|^q {\mathcal H}(x) dx 
\leq \bar{C}_q^q A_\beta({\mathcal H})^{1/(n-2\beta)} 
     \| {\mathcal H} \|_{L^2(B_\rho)}^{(n-1-2\beta)/(n-2\beta)} 
     \| f \|_{L^2(S)}^q
\end{displaymath}
for $q > q_0$, where 
\begin{displaymath}
q_0 = \frac{n+1-2\beta}{n-2\beta}.
\end{displaymath} 
We already have the bound on $A_\beta({\mathcal H})$ from the previous 
paragraph. Also, (\ref{givenrest}) tells us that to every $\delta' > 0$ 
there is a constant $C_{\delta'}$ such that
\begin{displaymath}
\| {\mathcal H} \|_{L^2(B_\rho)}^2 
= \| f \|_{L^1(S)}^{-r} \int_{B_\rho} |Ef(x)|^r H(x)^2 dx
\leq \| f \|_{L^1(S)}^{-r}
     C_{\delta'} \rho^{\delta'} A_\alpha(H) \| f \|_{L^\infty(S)}^r,
\end{displaymath}
where we have used the fact that $H^2 \leq H$. So
\begin{eqnarray*}
\lefteqn{\int_{B_\rho} \| f \|_{L^1(S)}^{-r/2} |Ef(x)|^{q+(r/2)} H(x) dx} \\
& \leq & \bar{C}_q^q \Big( C_\epsilon^{1/2} A_\alpha(H) 
         \| f \|_{L^1(S)}^{-r/2} \| f \|_{L^\infty(S)}^{r/2}  
         \Big)^{1/(n-2\beta)} \\
&      & \times \Big( \| f \|_{L^1(S)}^{-r} C_{\delta'} \rho^{\delta'} 
         A_\alpha(H) \| f \|_{L^\infty(S)}^r 
         \Big)^{(n-1-2\beta)/(2(n-2\beta))} \| f \|_{L^2(S)}^q \\
& \leq & C_{q,\epsilon,\delta} \, \rho^\delta 
         A_\alpha(H)^{(n+1-2\beta)/(2(n-2\beta))} \| f \|_{L^1(S)}^{-r/2} 
         \| f \|_{L^\infty(S)}^{r/2} \| f \|_{L^2(S)}^q
\end{eqnarray*}
provided $q > (n+1-2\beta)/(n-2\beta)$, where
$\delta= \delta' (n-1-2\beta)/(2(n-2\beta))$, and so
\begin{equation}
\label{covid}
\int_{B_\rho} |Ef(x)|^{q+(r/2)} H(x) dx
\leq C_{q,\epsilon,\delta} \, \rho^\delta A_\alpha(H)^{q_0/2}
     \| f \|_{L^2(S)}^q \| f \|_{L^\infty(S)}^{r/2}.
\end{equation}

We now let $\mu \in M(\mbb R^n)$ be positive, supported in the unit ball
$B(0,1)$, and satisfies ${\mathcal C}_\alpha(\mu) < \infty$. Since
$q_0=(n+1-2\beta)/(n-2\beta) > 1$, we have $q > 1$, and so we can apply 
Lemma \ref{bdmubyh} (with $q+(r/2)$ replacing $q$) to get a weight $H$ on 
$\mbb R^n$ of dimension $\alpha$ that satisfies
\begin{itemize}
\item $A_\alpha(H) \leq |B(0,1)|$
\item to every function $f \in L^1(S)$ there is a function $g \in L^1(S)$
      such that $|g| \leq |f|$ and
\begin{displaymath}	
\int |Ef(\rho \, x)|^{q+(r/2)} d\mu(x) 
\leq C'' \frac{{\mathcal C}_\alpha(\mu)}{\rho^\alpha}
     \int_{B(0,2\rho)} |Eg(y)|^{q+(r/2)} H(y) dy,
\end{displaymath}
where $C''$ depends on $q$, $r$, and $n$. 
\end{itemize}
Then (\ref{covid}) implies that
\begin{displaymath}	
\int |Ef(\rho \, x)|^{q+(r/2)} d\mu(x) 
\leq C''' \frac{{\mathcal C}_\alpha(\mu)}{\rho^\alpha}
     \rho^\delta \| f \|_{L^2(S)}^q \| f \|_{L^\infty(S)}^{r/2}.
\end{displaymath}
Letting $\gamma=\alpha-\delta$ and $p=q+(r/2)$, and using H\"{o}lder's
inequality, this becomes
\begin{displaymath}	
\int |Ef(\rho \, x)| \, d\mu(x) 
\leq C \frac{1}{\rho^{\gamma/p}}
      \| f \|_{L^2(S)}^{q/p} \| f \|_{L^\infty(S)}^{1-q/p}.
\end{displaymath}
Therefore,
\begin{equation}
\label{twoinfty}	
\Big| \int \widehat{\mu}(\rho \, \xi) f(\xi) d\sigma(\xi) \Big|
\leq C \frac{1}{\rho^{\gamma/p}}
      \| f \|_{L^2(S)}^{q/p} \| f \|_{L^\infty(S)}^{1-(q/p)}.
\end{equation}

We will use (\ref{twoinfty}) to estimate the $\sigma$-measure of the set
\begin{displaymath}
\{ \xi \in S : |\widehat{\mu}(\rho \, \xi)| > \lambda \}
\end{displaymath}
for $0 < \lambda \leq \| \mu \|$. For such $\lambda$ and for 
$l \in \mbb N$, we set
\begin{displaymath}
X_l= X_l(\lambda) = \{ \xi \in S : 2^{l-1} \lambda < 
|\widehat{\mu}(\rho \, \xi)| \leq 2^l \lambda \}.
\end{displaymath}
Clearly,
\begin{displaymath}
\{ \xi \in S : |\widehat{\mu}(\rho \, \xi)| > \lambda \} \subset 
\cup_{l=1}^\infty X_l.
\end{displaymath}
Inserting $\overline{\widehat{\mu}(\rho \, \xi)} \; \chi_{X_l}(\xi)$ for 
$f(\xi)$ in (\ref{twoinfty}), we obtain
\begin{displaymath} 
\Big( \int_{X_l} |\widehat{\mu}(\rho \, \xi)|^2 d\sigma(\xi)
\Big)^{1-(q/(2p))} \leq C \rho^{-\gamma/p} (2^l \lambda)^{1-(q/p)},
\end{displaymath}
which implies that
\begin{displaymath}
\Big( \sigma(X_l) (2^{l-1} \lambda)^2 \Big)^{1-(q/(2p))} 
\leq C \rho^{-\gamma/p} (2^l \lambda)^{1-(q/p)},
\end{displaymath}
which in turn implies that
\begin{displaymath}
\sigma(X_l) \lct \rho^{-2\gamma/(2p-q)} (2^l \lambda)^{-2p/(2p-q)}
\end{displaymath}
for all $l \in \mbb N$. Since $2p-q=q+r$, we have 
$\sum_{l=1}^\infty 2^{-2lp/(2p-q)} \sim 1$, and hence
\begin{displaymath}
\sigma(\{ \xi \in S : |\widehat{\mu}(\rho \, \xi)| > \lambda \})
\leq \sum_{l=1}^\infty \sigma(X_l) 
\lct \rho^{-2\gamma/(2p-q)} \lambda^{-2p/(2p-q)}.
\end{displaymath}
Of course, we also have the trivial bound
\begin{displaymath}
\sigma(\{ \xi \in S : |\widehat{\mu}(R \, \xi)| > \lambda \}) \leq
\sigma(S) \lct 1.
\end{displaymath}

We now let $p_0=2p/(2p-q)$ and use the two bounds we now have on the 
$\sigma$ measure of the set 
$\{ \xi \in S : |\widehat{\mu}(\rho \, \xi)| > \lambda \}$ to see that
\begin{eqnarray*}
\lefteqn{\int_0^{\| \mu \|}
\sigma(\{ \xi \in S : |\widehat{\mu}(\rho \, \xi)| > \lambda \})
\lambda^{p_0-1} d\lambda} \\
& & \lct \int_0^{\rho^{-\gamma/p}} \lambda^{p_0-1} d\lambda
         + \rho^{-\gamma p_0/p} \int_{\rho^{-\gamma/p}}^{\| \mu \|}
           \frac{d\lambda}{\lambda} 
    \; \lct \; \rho^{-\gamma p_0/p} \, \log \rho
\end{eqnarray*}
provided $\rho \geq (\| \mu \|^{-1} + \| \mu \|)^{p/\gamma}$. Thus
\begin{displaymath}
\int |\widehat{\mu}(\rho \, \xi)|^{p_0} d\sigma(\xi)
\lct (\log \rho) \rho^{-\gamma p_0/p}.
\end{displaymath}
Since $p_0 \leq 2$, it follows that
\begin{displaymath}
\int |\widehat{\mu}(\rho \, \xi)|^2 d\sigma(\xi)
\leq \| \mu \|^{2-p_0} \int |\widehat{\mu}(\rho \, \xi)|^{p_0} d\sigma(\xi)
\lct (\log \rho) \rho^{-\gamma p_0/p}.
\end{displaymath}

The inequality we just derived is true for all positive measures 
$\mu \in M(\mbb R^n)$ that are supported in the unit ball and satisfy
${\mathcal C}_\alpha(\mu) < \infty$. So, by Proposition \ref{basicgmt},
$\gamma p_0/p \leq \alpha$. Recalling that $\gamma=\alpha-\delta$, and 
letting $\delta \to 0$, we see that $p_0 \leq p$. 

Replacing $p_0$ by its value in terms of $p$ and $q$, we see that
$2p-q \geq 2$. Replacing $p$ by its value in terms of $q$ and $r$, this
becomes $r \geq 2-q$. Since this is true for every 
$q > (n+1-2\beta)/(n-2\beta)$, it follows that
\begin{displaymath}
r \geq 2 - \frac{n+1-2\beta}{n-2\beta}.
\end{displaymath}
Recalling that $\beta=(\alpha+\epsilon)/2$, and letting $\epsilon \to 0$,
we arrive at
\begin{equation}
\label{lastequation}
r \geq 2 - \frac{n+1-\alpha}{n-\alpha}.
\end{equation}

If $\alpha' \leq \alpha$, then any weight $H$ on $\mbb R^n$ of dimension 
$\alpha'$ is also a weight of dimension $\alpha$. Moreover,
$A_\alpha(H) \leq A_{\alpha'}(H)$. So the given estimate (\ref{givenrest}) 
holds for all weights on $\mbb R^n$ of dimension $\alpha' \leq \alpha$, and 
so we can send $\alpha \to 0$ in (\ref{lastequation}) to get
\begin{displaymath}
r \geq 2 - \frac{n+1}{n} = \frac{n-1}{n},
\end{displaymath}
as promised.

\begin{remark}
\label{lastremark}
During the proof of Theorem \ref{main3}, we used the fact that $q+(r/2) > 1$
to obtain the decay estimate
\begin{displaymath}
\| \widehat{\mu}(\rho \, \cdot) \|_{L^2(S)}
\lct (\log \rho)^{1/2} \rho^{-\gamma p_0/(2p)}
\end{displaymath}
for all positive measures $\mu \in M(\mbb R^n)$ that are supported in the 
unit ball and satisfy ${\mathcal C}_\alpha(\mu) < \infty$. If, for some
reason, we knew that $q+(r/2) \geq 2$, then {\rm (\ref{covid})} (via 
Proposition \ref{restodecay}) would give us the decay estimate
\begin{displaymath}
\| \widehat{\mu}(\rho \, \cdot) \|_{L^1(S)}
\lct \rho^\epsilon \rho^{-\alpha/(q+(r/2))}
\end{displaymath}
for all positive measures $\mu \in M(\mbb R^n)$ that are supported in the 
unit ball and satisfy $I_\alpha(\mu) < \infty$, which would have allowed us 
to use Proposition \ref{soifact} to conclude that 
\begin{displaymath}
\frac{\alpha}{q+(r/2)} \leq \frac{\alpha}{2},
\end{displaymath}
i.e.\ $r/2 \geq 2-q$. Proceeding as we did in the last part of the proof of 
Theorem \ref{main3}, we would have arrived at $r \geq 2(n-1)/n$.
\end{remark}

\end{document}